\theoremstyle{plain}
\newtheorem{theorem}{Theorem}[section]
\newtheorem{proposition}[theorem]{Proposition}
\newtheorem{lemma}[theorem]{Lemma}
\newtheorem{corollary}[theorem]{Corollary}
\theoremstyle{definition}
\newtheorem{definition}[theorem]{Definition}
\newtheorem{problem}{Problem}
\newtheorem{exercise}[theorem]{Exercise}
\newtheorem{example}[theorem]{Example}
\newtheorem{remark}[theorem]{Remark}
\newtheorem*{remark*}{Remark}
\newcommand{\lie}[1]{\mathfrak{#1}}
\newcommand{\bA}{\mathbb{A}}
\newcommand{\bC}{\mathbb{C}}
\newcommand{\bG}{\mathbb{G}}
\newcommand{\bP}{\mathbb{P}}
\newcommand{\bQ}{\mathbb{Q}}
\newcommand{\bR}{\mathbb{R}}
\newcommand{\bZ}{\mathbb{Z}}
\newcommand{\cA}{\mathcal{A}}
\newcommand{\cB}{\mathcal{B}}
\newcommand{\cD}{\mathcal{D}}
\newcommand{\cE}{\mathcal{E}}
\newcommand{\cK}{\mathcal{K}}
\newcommand{\cL}{\mathcal{L}}
\newcommand{\cM}{\mathcal{M}}
\newcommand{\cO}{\mathcal{O}}
\newcommand{\di}{\partial}
\newcommand{\et}{\text{\'et}}
\DeclareMathOperator{\ad}{ad}
\DeclareMathOperator{\Aut}{Aut}
\DeclareMathOperator{\Bun}{Bun}
\DeclareMathOperator{\diag}{diag}
\DeclareMathOperator{\End}{End}
\DeclareMathOperator{\ev}{ev}
\DeclareMathOperator{\Ext}{Ext}
\DeclareMathOperator{\Gal}{Gal}
\DeclareMathOperator{\Gr}{Gr}
\DeclareMathOperator{\gr}{gr}
\DeclareMathOperator{\Hom}{Hom}
\DeclareMathOperator{\id}{id}
\DeclareMathOperator{\im}{Im}
\DeclareMathOperator{\Jac}{Jac}
\DeclareMathOperator{\GL}{GL}
\DeclareMathOperator{\Map}{Map}
\DeclareMathOperator{\Mat}{Mat}
\DeclareMathOperator{\PGL}{PGL}
\DeclareMathOperator{\Pic}{Pic}
\DeclareMathOperator{\rank}{rank}
\DeclareMathOperator{\Res}{Res}
\DeclareMathOperator{\SL}{SL}
\DeclareMathOperator{\Sp}{Sp}
\DeclareMathOperator{\Spec}{Spec}
\DeclareMathOperator{\tr}{tr}
\DeclareMathOperator{\Val}{Val}
\DeclareMathOperator{\Vect}{Vect}
\DeclarePairedDelimiter{\inner}{\langle}{\rangle}
\DeclarePairedDelimiterX{\NO}[1]{{\vcentcolon}}{{\vcentcolon}\,}{{#1}}
\title{Hitchin systems and their quantization}
\author{Pavel Etingof and Henry Liu}
\begin{document}

\maketitle

\tableofcontents

\newpage

\begin{abstract} This is an expanded version of the notes by the second author of the lectures on Hitchin systems and their quantization given by the first author at the Beijing Summer Workshop in Mathematics and Mathematical Physics ``Integrable Systems and Algebraic Geometry" (BIMSA-2024).
\end{abstract} 

\section{Introduction} 

In 1987, Nigel Hitchin introduced a finite dimensional complex integrable system attached to a reductive group $G$ and a smooth projective curve $X$ over $\mathbb C$, which is now called the {\it Hitchin integrable system}. This system lives on (a partial compactification of) the cotangent bundle $T^*{\rm Bun}^\circ_G(X)$ to the moduli space ${\rm Bun}^\circ_G(X)$ of stable principal $G$-bundles on $X$, and admits a direct generalization to the ramified case, when $X$ has marked points with various kinds of level structure at them. In this generalized form, Hitchin systems (even restricted to genus $0$ and $1$) and their degenerations subsume (upon $q$-deformation) most of the known finite dimensional integrable systems. Also, Beilinson and Drinfeld showed in \cite{BD} that Hitchin systems admit a natural quantization, which is a crucial ingredient in their proposed {\it geometric Langlands correspondence}, finally proved in 2024 for all $G$ (in the stronger categorical form) in \cite{GR,GeL}. Finally, 
quantum Hitchin systems play a central role in the recently introduced {\it analytic Langlands correspondence} (\cite{Te,Fr,EFK,EFK2,EFK3,EFK4}) and the interpretation of the geometric and analytic Langlands correspondence in supersymmetric 4-dimensional  quantum gauge theory (\cite{KW,GW}). This puts Hitchin systems in the center of attention in several areas of mathematics and mathematical physics. 

The goal of this paper is to give an accessible introduction to Hitchin systems and their quantization. 
It is an expanded version of the notes by the second author of the lectures given by the first author at the Beijing Summer Workshop in Mathematics and Mathematical Physics ``Integrable Systems and Algebraic Geometry" (BIMSA-2024)
dedicated to the memory of Igor Krichever, who did foundational work on Hitchin systems and 
stressed their importance over several decades. 

We begin by recalling the notion of a principal bundle in algebraic geometry (Section 2), 
and then we discuss the moduli stack ${\rm Bun}_G(X)$, where $X$ is a smooth irreducible projective curve and $G$ a split reductive group\footnote{The basic examples of $G$ are $\GL_n$, $\SL_n$,
and $\PGL_n$, and also the classical groups (symplectic and
orthogonal). A particularly important case is the multiplicative group
$\GL_1$, and split tori, which are products of several copies of $\GL_1$.} over some field $\mathbf k$ (Section 3).
Then we proceed to the construction of ${\rm Bun}_G(X)$ as a double quotient of the loop group 
(Section 4). In Section 5, we introduce a smooth subvariety ${\rm Bun}_G^\circ(X)\subset {\rm Bun}_G(X)$ of stable bundles and consider its cotangent bundle, which is the space of Higgs pairs. In Section 6, we introduce the classical Hitchin system and prove Hitchin's theorem on its integrability (the proof of completeness is given only for type $A$). This leads us to the important notion of a {\it spectral curve}. In Section 7, we generalize the Hitchin system to the (tamely) ramified case, and show how some known integrable systems, such as the (twisted) Garnier system and elliptic Calogero-Moser system, arise as examples. Finally, in Section 8 we discuss quantization of Hitchin systems, which is based on the representation theory of affine Lie algebras. 

For pedagogical purposes the paper contains problems (supplied with solutions) and exercises (without solutions). Solutions of problems are given in Section 9. 

{\bf Acknowledgements.} We are very grateful to the organizers of the
Beijing Summer Workshop in Mathematics and Mathematical Physics for
their support and to BIMSA for hospitality. The work of the first
author was partially supported by the NSF grant DMS-2001318. The work
of the second author was supported by the World Premier International
Research Center Initiative (WPI), MEXT, Japan.

\section{Principal \texorpdfstring{$G$}{G}-bundles}

\subsection{Definition of a principal \texorpdfstring{$G$}{G}-bundle}

Let $\mathbf k$ be a field. The basic case for us will be $\mathbf k =
\bC$. Let $X$ be an algebraic variety and $G$ be an affine algebraic
group, both over $\mathbf k$.

\begin{definition}\label{prinbu}
   A {\it principal
    $G$-bundle} on $X$, also called a {\it $G$-torsor} over $X$, is an algebraic variety $P$ over $\mathbf k$ equipped with
  a regular map $\pi\colon P \to X$ and a right action $a\colon P\times G\to P$ of $G$ preserving  $\pi$, which (\'etale) locally on $X$ is isomorphic to the projection $G \times X \to X$ with the $G$-action by right multiplication on the
  $G$-factor.
\end{definition}

In other words, there exists an (\'etale) open cover $\{U_i\}_{i \in
  I}$ of $X$ such that for all $i\in I$, there is a $G$-invariant isomorphism $P|_{U_i} \cong G \times U_i$ under which $\pi$ goes to the projection $G\times U_i\to U_i$.\footnote{An \'etale chart is more general than a Zariski chart: it is a
Zariski open set $\overline U \subset X$ along with a finite \'etale morphism (cover) $U
\to \overline U$. (Recall that a morphism is {\it \'etale} if it is flat and unramified.) We will not focus on this too much, because one can
work with \'etale charts in much the same way as with Zariski charts.
Namely, by definition, intersections $U_i \cap U_j$ are given by fiber products $U_i \underset{X}{\times} U_j$, and $P|_{U_i} \coloneqq P \underset{X}{\times} U_i$. Naively we can just pretend that the \'etale charts $U_i$ are ordinary Zariski charts.}

The same definition can be made more
generally when $X$ (and hence $P$) is a scheme.

Let $P_x \coloneqq \pi^{-1}(x)$ be the fiber of $\pi$ at a point $x \in
X$. This is a principal homogeneous space for $G$. Thus a principal $G$-bundle on $X$ can be thought of as a family of principal homogeneous $G$-spaces parametrized by $X$. 

\subsection{Clutching functions}

It follows that a principal $G$-bundle can be realized concretely by
{\it clutching} (or {\it transition}) {\it functions}, namely, regular
functions $g_{ij}\colon U_i \cap U_j \to G$ defined when $U_i\cap
U_j\ne \emptyset$ such that:
\begin{enumerate}
\item $g_{ii}=\id$, $g_{ij} \circ g_{ji} = \id$;
\item the {\it $1$-cocycle condition}: $g_{ij} \circ g_{jk} \circ g_{ki} = \id$
  on the triple intersection $U_i \cap U_j \cap U_k$ (when it is non-empty).
\end{enumerate}
This means that the $\{g_{ij}\}$
form a {\it {\v C}ech $1$-cocycle} on $X$ with coefficients 
 in the sheaf $G_{X}$ of $G$-valued regular functions on $X$. 
The idea is that we glue trivial bundles on each $U_i$ using these
clutching functions.\footnote{More precisely, for \'etale
covers, gluing is slightly more complicated than for usual open covers, 
and one must use a
procedure called {\it faithfully flat descent} (\cite{O}, Chapter 4).}

In fact, this data classifies principal
$G$-bundles modulo transformations
\[ g_{ij} \mapsto h_i \circ g_{ij} \circ h_j^{-1} \]
for regular functions $h_i\colon U_i \to G$. In other words, 
cohomologous cocycles define isomorphic bundles, and vice versa. 
Hence $G$-bundles on $X$ up to isomorphism are classified by the {\it first \'etale cohomology} $H^1_{\et}(X,G_X)$. 

Note that this cohomology is in general just a set (not a group); it only has a natural group structure if $G$ is abelian (in which case the product is defined by multiplying clutching functions). 

\subsection{Vector bundles} 

\begin{definition}\label{vecbu}
  A {\it vector bundle} of rank $n$ on a scheme $X$ over $\mathbf k$ is
  a scheme $P$ over $\mathbf k$ equipped with a regular map $\pi\colon P
  \to X$ with operations of fiberwise addition
  $P\underset{X}{\times}P\to P$ and scalar multiplication $\mathbf
  k\times P\to P$ which (\'etale) locally on $X$ is isomorphic to the
  projection $\mathbf k^n\times X \to X$ with the usual fiberwise
  addition and scalar multiplication. In this case, for $x \in X$, the
  $n$-dimensional vector space $\pi^{-1}(x)$ is called the {\it fiber}
  of $\pi$ at $x$. A vector bundle of rank $1$ is called a {\it line
    bundle}.
\end{definition}
  
In other words, a vector bundle on $X$ of rank $n$ is defined by an atlas of \'etale charts $\lbrace U_i\rbrace$ on $X$ with transition functions 
$g_{ij}\colon U_i\times_X U_j\to \GL_n$ satisfying the 1-cocycle condition.
Thus the notion of a vector bundle of rank $n$ is equivalent 
to the notion of a principal $\GL_n$-bundle.     
  
It is easy to see that a vector bundle $E$ on $X$ is determined by its {\it sheaf of sections} 
$U\mapsto \Gamma(U,E)$ for \'etale charts $U$ on $X$, and if $U$ is affine then $\Gamma(U,E)$ 
is a locally free (i.e., projective) $\mathbf k[U]$-module\footnote{For an affine scheme $U$ over a field $\mathbf k$, the ring of regular functions on $U$ will be denoted by $\mathbf k[U]$ or $\mathcal O(U)$.} of rank $n$. Moreover, 
as we will see below, we may consider the usual Zariski charts instead of \'etale ones. 
Thus vector bundles of rank $n$ on $X$ are the same as locally free sheaves of the same rank.
In particular, if $X$ is affine then a vector bundle on $X$ of rank $n$ is the same thing as 
a projective $\mathbf k[X]$-module of the same rank. 
  
Finally, recall that on vector bundles over any scheme there are natural operations of taking 
direct sum, tensor product, and dual bundle $E\mapsto E^\vee$, which 
are defined by the corresponding operations on the fibers (or clutching functions). 
For example, $g_{E^\vee,ij}=(g_{E,ij}^{-1})^*$. 

\subsection{Associated bundles} 

Let $Y$ be an algebraic variety with a left action of $G$. 
Then any principal $G$-bundle $E$ on $X$ gives rise to the {\it associated bundle} $E\times_G Y$ on $X$ with fiber $Y$, with the same clutching functions as $E$
but viewed as acting on $Y$. 

For instance, if $G = \GL_n$ and $Y$ is the vector representation of $G$, then $E\times_G Y$ is the {\it vector bundle associated to the principal $G$-bundle $E$}. As noted above, this defines a one-to-one correspondence between principal $\GL_n$-bundles and vector bundles of rank $n$ on $X$.

Similarly, $\SL_n$-bundles bijectively correspond to vector bundles of
rank $n$ with trivial determinant, and, on a curve over an algebraically closed field, $\PGL_n$-bundles bijectively
correspond to vector bundles of rank $n$ up to tensoring with line
bundles\footnote{In higher dimensions there can be more $\PGL_n$-bundles, corresponding to Azumaya algebras.}. More generally, given a finite-dimensional rational\footnote{Meaning, every $v \in V$ is contained in a finite-dimensional $G$-invariant subspace on which $G$ acts algebraically. This holds automatically if $\rho$ is a morphism of group schemes.}
representation
$$
\rho\colon G \to \GL(V),
$$ 
every principal $G$-bundle $E$ on $X$ with clutching functions $g_{ij}$ gives rise to the associated vector bundle $E_\rho=E\times_G V$ of rank $\dim V$, whose clutching functions are $\rho(g_{ij})$. 

\subsection{Induced bundles}
Let $H,G$ be affine algebraic groups over $\mathbf k$, $E$ be a
principal $H$-bundle on $X$ and $\varphi\colon H \to G$ be a
homomorphism. Then the associated bundle $F\coloneqq E\times_H G$ is a
principal $G$-bundle on $X$ called the {\it induced $G$-bundle} from
$E$ via $\varphi$. Reciprocally, if $F$ is a principal $G$-bundle on
$X$ then an {\it $H$-structure} on $F$ is an $H$-bundle $E$ on $X$
with an isomorphism $F\cong E\times_H G$.

Similarly, we can talk about an $H$-structure on $F$ over a (locally
closed) subscheme $X'\subset X$. For example, if $H\subset G$ then an
$H$-structure on $F$ over a point $x\in X$ is just an $H$-orbit in the
fiber $F_x$. Note that choices of an $H$-structure on $F$ at $x$ are parametrized by $G/H$, and that $H$ matters only up to
conjugation, because right-multiplication by a group element $g \in G$
transforms an $H$-orbit into a $gHg^{-1}$-orbit.

\subsection{Classification of line bundles} 
  Let $G = \GL_1$ and assume for simplicity that $\mathbf k$ is algebraically closed. If $G_{X}$ denotes the (group-valued) sheaf associated to $G$, i.e. $G_X(U) \coloneqq \Gamma(U, \cO_U)^\times$, then its first \'etale cohomology is
  $$
  H^1_{\et}(X,G_X) = \Pic(X)
  $$ 
  (or, more precisely, $\Pic(X)(\mathbf k)$), the Picard group of $X$, which classifies line bundles on $X$. 
 In particular, if $X$ is a smooth irreducible projective curve over $\mathbf k$ then 
there is a (non-canonically split) short exact sequence
\begin{equation} \label{ses1}
0 \to \Pic_0(X) \to \Pic(X) \xrightarrow{\deg} \bZ \to 0 
\end{equation}
in which the projection $\Pic(X)\to \mathbb Z$ takes the degree of a line bundle. The kernel $\Pic_0(X)$ of this projection (the group of isomorphism classes of line bundles of degree $0$) is exactly the {\it Jacobian} $\Jac(X)$ of $X$; for $\mathbf k =
\bC$, as an analytic manifold it is a compact complex torus of dimension $g$, where $g$ is the genus of $X$. 
  
  For example, let $X=\mathbb P^1$. Then 
  $\Pic(X) = \bZ$ and all line bundles on $X$ have the form
  $\cO(n) \coloneqq \cO(1)^{\otimes n}$ for $n \in \bZ$. To construct
  $\cO(n)$ explicitly, cover $\bP^1 = \bA^1 \cup \infty$ with two
  charts, $U_0 \coloneqq \bA^1=\bP^1\setminus \infty$ and $U_\infty \coloneqq \bP^1 \setminus
  0$. Now, every vector bundle on $\bA^1$ is trivial; this is because
  vector bundles on an affine scheme $S$ are just finite projective modules over the ring of regular functions on $S$,
  and all finite projective modules over $\mathbf k[\mathbb A^1]=\mathbf k[x]$ are free. So we may
  use these charts to define vector bundles (in particular, line bundles) 
  on $\bP^1$. Their
  intersection is $U_0 \cap U_\infty = \bA^1 \setminus 0 = \bG_m$, so
  we must specify a clutching function
  \[ g\colon \bG_m \to \GL_1=\bG_m \]
  from $U_\infty$ to $U_0$, i.e., a non-vanishing regular function on $\mathbb G_m$. Such non-vanishing functions are all of
  the form $g(z) = cz^n$ for some non-zero constant $c$ and integer $n
  \in \bZ$. The constant $c$ may be absorbed by rescaling, for example
  on $U_0$, so we may take $g(z) = z^n$. The clutching function $g(z)
  = z^n$ defines the line bundle $\cO(n)$. This confirms that the line bundles on $\mathbb P^1$ 
  are $\cO(n)$, $n\in \mathbb Z$. 

  By definition, sections of $\cO(n)$ are pairs of polynomials
  $x_0(z)$ and $x_\infty(z^{-1})$ such that $x_0(z) = z^n
  x_\infty(z^{-1})$. Hence $\cO(n)$ has an $(n+1)$-dimensional space
  of sections if $n \ge 0$, and no non-zero sections if $n < 0$. Also 
  $\cO(n)^\vee\cong \cO(-n)$. 

\subsection{Smooth and analytic \texorpdfstring{$G$}{G}-bundles, Serre's GAGA theorem}

Definition \ref{prinbu} is completely parallel to the one used in
differential geometry and topology, in which case $G$ is a real Lie
group and we consider $C^\infty$ functions instead of regular
functions. The same is true for complex analytic geometry, where $G$
is a complex Lie group and we use holomorphic functions. (Of course,
in these cases, one may consider usual open sets instead of \'etale
ones.)

One of the most important basic theorems in geometry of $G$-bundles is 
{\it Serre's GAGA theorem}, named after his paper ``G\'eom\'etrie Alg\'ebrique et G\'eom\'etrie Analytique" (\cite{S}) where it was proved.

\begin{theorem}[Serre's GAGA theorem]\label{gaga}
  If $X$ is a complex projective variety, then there is an equivalence
  of categories
  \[ \begin{pmatrix} \text{algebraic }G\text{-bundles on $X$}\end{pmatrix} \xrightarrow{\sim} \begin{pmatrix} \text{analytic } G\text{-bundles on $X$}\end{pmatrix} \]
  obtained functorially by analytification: we view the algebraic variety $X$ 
  as an analytic variety $X^{\rm an}$, 
  and the same for $G$-bundles on $X$.
  \end{theorem}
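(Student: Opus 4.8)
The plan is to deduce the theorem from \emph{Serre's GAGA theorem for coherent sheaves}: for $X$ a complex projective variety, analytification induces an equivalence of categories $\mathrm{Coh}(X)\xrightarrow{\sim}\mathrm{Coh}(X^{\mathrm{an}})$ between coherent algebraic and coherent analytic $\cO_X$-modules, and moreover the natural maps $H^i(X,\mathcal{F})\to H^i(X^{\mathrm{an}},\mathcal{F}^{\mathrm{an}})$ are isomorphisms for all $i$. Granting this, one handles $G$-bundles as follows. When $G=\GL_n$, a $G$-bundle is the same as a rank-$n$ vector bundle, i.e.\ a locally free coherent sheaf, and the equivalence above preserves local freeness: a coherent sheaf is locally free iff its completed stalks are free, and analytification is an isomorphism on completed local rings $\hat\cO_{X,x}\cong\hat\cO_{X^{\mathrm{an}},x}$. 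For a general affine algebraic group $G$, fix a closed embedding $G\hookrightarrow\GL_N$. A $G$-bundle $E$ then amounts to the induced $\GL_N$-bundle $E\times_G\GL_N$ together with a reduction of structure group, i.e.\ a section of the associated bundle with fiber $\GL_N/G$; conversely such a section recovers $E$. The $\GL_N$-part is controlled by the vector bundle case, so it remains to compare algebraic and analytic sections of a fibration over $X$ with fiber the quasi-projective $G$-variety $\GL_N/G$. When $G$ is reductive, $\GL_N/G$ is affine (Matsushima's criterion), so this fibration is $\Spec_X$ of a quasi-coherent sheaf of $\cO_X$-algebras that is the filtered union of its coherent $\cO_X$-submodules, and sections are algebra homomorphisms to $\cO_X$; applying the coherent GAGA equivalence, compatibly with the algebra structure and passing to the colimit, identifies algebraic and analytic sections. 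In general one first replaces $\GL_N/G$ by a $G$-equivariant projective compactification and argues similarly, now using properness.

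For the coherent statement itself, first reduce to projective space: a closed embedding $i\colon X\hookrightarrow\bP^n$ identifies, via $i_*$, the category $\mathrm{Coh}(X)$ with the full subcategory of $\mathrm{Coh}(\bP^n)$ of sheaves annihilated by the ideal of $X$, compatibly with analytification and with cohomology, so it suffices to treat $X=\bP^n$. On $\bP^n$ the argument has two inputs. The computational heart is the identification $H^i(\bP^n,\cO(d))\xrightarrow{\sim}H^i((\bP^n)^{\mathrm{an}},\cO(d)^{\mathrm{an}})$ for all $i$ and all $d\in\bZ$: the algebraic groups are known explicitly, and the analytic ones have the same finite dimensions (computed, e.g., via Dolbeault cohomology). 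The second input is the analytic analogue of Serre's theorems: every coherent analytic sheaf $\mathcal{G}$ on $(\bP^n)^{\mathrm{an}}$ admits, for $d\gg0$, a surjection $\cO(-d)^{\oplus m}\twoheadrightarrow\mathcal{G}$, and satisfies $H^i((\bP^n)^{\mathrm{an}},\mathcal{G}(d))=0$ for $i>0$ and $d\gg0$; this follows from ampleness of $\cO(1)$ together with the Cartan--Serre coherence and finiteness theorems.

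With these in hand the rest is formal. Full faithfulness: presenting an algebraic coherent $\mathcal{F}$ as a cokernel of a map between finite sums of $\cO(-d)$'s reduces the comparison of $\Hom$-groups to $H^0$ of twists, which is the case $i=0$ of the first input. Essential surjectivity: a given analytic $\mathcal{G}$ has a two-step resolution by sums of line bundles, whose maps are algebraic by full faithfulness, so $\mathcal{G}$ is the analytification of the algebraic cokernel. Cohomology comparison in all degrees: dévissage from the $\cO(d)$ case, using the five lemma on the long exact sequences attached to a resolution by sums of line bundles.

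The main obstacle is the analytic input on $\bP^n$, namely that a coherent analytic sheaf becomes globally generated with vanishing higher cohomology after a large twist. On the algebraic side this is Serre vanishing, essentially formal from ampleness and Noetherian induction; on the analytic side it rests on the substantial machinery of coherent analytic sheaves---Cartan's Theorems A and B for Stein spaces, the Cartan--Serre finiteness theorem for compact analytic spaces, and coherence of higher direct images---applied to the standard cover of $\bP^n$ by affine (hence Stein) charts. Once this analytic foundation is available, the remaining steps and the reductions in the $G$-bundle argument are comparatively routine diagram chases; a secondary point needing care is the non-reductive case, where one must exhibit and use a $G$-equivariant projective compactification of $\GL_N/G$.
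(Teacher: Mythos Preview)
The paper does not prove this theorem at all: it is stated with a citation to Serre's original paper \cite{S} and then used as a black box (in Problem~\ref{bafun} and Problem~\ref{usegaga}). So there is no ``paper's own proof'' to compare against.

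Your sketch is a faithful outline of the standard argument. The coherent GAGA part (reduction to $\bP^n$, explicit computation for line bundles, analytic Serre vanishing via Cartan--Serre, then d\'evissage) is correct as stated. The deduction for $G$-bundles is also correct in the reductive case: the key observation that a section of the affine $X$-scheme $\Spec_X\mathcal{A}$ is an $\cO_X$-algebra map $\mathcal{A}\to\cO_X$, determined by its values on a coherent generating subsheaf, is exactly what makes coherent GAGA apply even though $\mathcal{A}$ itself is only quasi-coherent. One small point worth tightening: for full faithfulness on the $G$-bundle side you should also say why morphisms of $G$-bundles (i.e.\ isomorphisms, since these are torsors) algebraize; this follows because an isomorphism $E_1\cong E_2$ is a section of the $X$-scheme $\underline{\mathrm{Isom}}_G(E_1,E_2)$, which is a torsor under the adjoint group scheme and hence affine over $X$, so the same argument applies.

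The genuinely soft spot is the non-reductive case. Saying ``replace $\GL_N/G$ by a $G$-equivariant projective compactification and argue similarly'' hides real work: you need that the analytic section, once algebraized as a section of the compactification, actually lands in the open piece $\GL_N/G$. This is true (the locus where it fails is closed analytic, hence algebraic, and empty analytically), but you should say so. Alternatively, and perhaps more cleanly, one can avoid the compactification entirely via the Tannakian description: a $G$-bundle is a tensor functor $\mathrm{Rep}(G)\to\Vect(X)$, reducing everything to the $\GL_n$ case already handled. Since the paper is really only concerned with reductive $G$, either route is acceptable here.
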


\begin{remark} The equivalence of Theorem \ref{gaga} extends to coherent sheaves and therefore preserves cohomology of such sheaves. 
\end{remark} 

\begin{problem} \label{bafun}
  Let $L$ be a non-trivial line bundle of degree $0$ on an elliptic
  curve $X$ over $\bC$. Show that
  \begin{enumerate}[label=(\roman*)]
  \item $L|_{X \setminus 0}$ is non-trivial as an algebraic bundle,
    but
  \item $L|_{X \setminus 0}$ is trivial as an analytic bundle.
  \end{enumerate}
  Conclude that the GAGA theorem fails for the (non-projective) curve $X \setminus 0$.
\end{problem} 

\subsection{\'Etale vs Zariski charts}
\label{subsec:etale-vs-zariski-charts}

Finally, let us explain why we need to consider \'etale charts instead
of Zariski charts. The reason is that in usual topology, points have
neighborhoods which are contractible and therefore have trivial
cohomology, while in the Zariski topology, such neighborhoods do not
exist in general (e.g., think about Zariski open sets on the complex
line, which are just complements of finite sets). For this reason,
principal $G$-bundles on algebraic varieties need not be Zariski
locally trivial.

\begin{example}
  Let $X \coloneqq \bC^\times$ and $P \coloneqq \bC^\times$ with map
  $\pi\colon P \to X$ given by $z \mapsto z^2$. This is a principal
  $\mu_2$-bundle, where $\mu_2 =\{1, -1\}$. But it does not
  trivialize on any non-empty Zariski open set in $\bC^\times$. Indeed, 
  the monodromy around $0 \in \bC$
  is multiplication by $-1$, and removing a finite number of points
  will not change the existence of this monodromy.
\end{example}

Even if $G$ is connected reductive, this may still happen,
say, when $X$ is a surface. This is a much more subtle phenomenon which we will not talk about. But for curves, which is the case we are interested in, the situation is simpler, due to
the following (non-trivial) theorem.

\begin{theorem}[Borel--Springer \cite{BS}, Steinberg \cite{St}]\label{BSS}
  If $X$ is a smooth curve and $G$ is a connected reductive group,
  then any principal $G$-bundle $E$ on $X$ is Zariski-locally trivial. In other words, for any $x\in X$, $E$ trivializes after removing a finite subset  not containing $x$ from $X$ (possibly depending on $E$). 
\end{theorem}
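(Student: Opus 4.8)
The plan is to deduce the statement from two facts: (i) $E$ is trivial over the generic point $\eta=\Spec\mathbf k(X)$ of $X$; and (ii) for every closed point $x\in X$, $E$ is already trivial over the local ring $\cO_{X,x}$. Granting these, I would finish by spreading out: since $\cO_{X,x}=\varinjlim_{U\ni x}\cO(U)$, a trivialization of $E$ over $\Spec\cO_{X,x}$ descends to one over some affine Zariski open $U\ni x$, and then $X\smallsetminus U$ is a finite set not containing $x$, which is exactly the claim. Note that (ii) applied to a single prescribed $x$ already gives the theorem, and (i) is recovered as the limit of (ii).

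To establish (i), I would assume for simplicity that $\mathbf k$ is algebraically closed (the case of interest, in particular $\mathbf k=\bC$). Then $K:=\mathbf k(X)$ has transcendence degree $1$ over an algebraically closed field, so by Tsen's theorem $K$ is a $C_1$ field and in particular has cohomological dimension $\le 1$. The Steinberg--Springer theorem --- the content of \cite{BS,St} --- asserts that over a (perfect) field of cohomological dimension $\le 1$ every torsor under a connected linear algebraic group is trivial; applied to $G$ this gives $E_\eta\cong G_K$. This is the one genuinely deep input, and it is where connectedness of $G$ is used essentially.

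To establish (ii), I would fix $x$ and set $R:=\cO_{X,x}$, a discrete valuation ring with fraction field $K$. Choosing a split Borel subgroup $B\subset G$ (available since $G$ is split reductive), the flag variety $G/B$ is projective over $\mathbf k$, so the associated $G/B$-bundle $E/B\to X$ is proper over $X$. By (i) we have $E_\eta\cong G_K$, hence $(E/B)_\eta\cong(G/B)_K$, which has the $K$-point $eB$; that is, $E/B\to\Spec R$ has a section over the generic point $\Spec K$. By the valuative criterion of properness this extends to a section $\sigma\colon\Spec R\to E/B$, i.e. to a reduction of $E|_{\Spec R}$ to a $B$-torsor $E'$. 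Writing $B=T\ltimes U$ with $T$ a split maximal torus and $U$ split unipotent, the vanishing $H^1(\Spec R,\cO)=0$ (as $\Spec R$ is affine) together with a standard dévissage along the descending central series of $U$ gives $H^1_{\et}(\Spec R,U)=\{*\}$, so $E'$ reduces further to a $T$-torsor $E''$. Finally $T\cong\bG_m^{r}$ and $\Spec R$ is local, so $H^1_{\et}(\Spec R,T)=\Pic(\Spec R)^{r}=0$; hence $E''$, and with it $E'$ and $E|_{\Spec R}$, is trivial.

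The main obstacle is clearly step (i): the vanishing $H^1(\mathbf k(X),G)=\{*\}$ is a substantial theorem combining Tsen's theorem with the structure theory of reductive groups (Bruhat decomposition, parabolic subgroups), and in these notes it should simply be quoted from \cite{BS,St}. Everything else is routine: reduction to the Borel is just properness plus the valuative criterion, killing the unipotent part uses only vanishing of coherent $H^1$ on an affine scheme, killing the torus part uses only that $\Pic$ of a local ring vanishes, and the concluding step is ordinary spreading out. (For non-algebraically-closed $\mathbf k$ one must argue more carefully in (i), but the curve case of interest is over $\bC$.)
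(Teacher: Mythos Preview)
The paper does not give a proof of this theorem; it is stated with attribution to \cite{BS,St} and described as ``non-trivial,'' but no argument is supplied. So there is nothing to compare your proposal against directly.

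That said, your sketch is essentially the standard deduction of Zariski-local triviality from the results actually proved in the cited references, and it is correct. What Steinberg (and Borel--Springer) prove is Serre's Conjecture~I: $H^1(K,G)=\{*\}$ for $G$ connected linear algebraic over a perfect field $K$ of cohomological dimension $\le 1$. You correctly identify this as the deep input, correctly invoke Tsen to see that $\mathbf k(X)$ is $C_1$ when $\mathbf k$ is algebraically closed, and then run a clean local-to-global argument: reduce to the DVR $\cO_{X,x}$, use properness of $G/B$ and the valuative criterion to get a $B$-reduction, kill the unipotent radical by coherent vanishing on an affine, kill the torus by $\Pic$ of a local ring, and spread out. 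All of this is sound.

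Two small remarks. First, your caveat about non-closed $\mathbf k$ is genuine: over a general base field, $\mathbf k(X)$ need not have cohomological dimension $\le 1$, so the statement as literally written in the paper requires some care (and the paper itself is not precise on this point; its working assumption is $\mathbf k=\bC$ or at least algebraically closed). Second, step~(ii) could be shortened by noting that over a regular local ring of dimension~$1$, any $G$-torsor for $G$ connected reductive that is generically trivial is trivial (Nisnevich); but your hands-on argument via the Borel is perfectly fine and arguably more transparent.
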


\begin{remark} Note that for some connected reductive groups, like $G = \GL_n$ or $\SL_n$, this is actually true for any $X$ (not just smooth curves) by the non-abelian version of {\it Hilbert Theorem 90} (\cite{S2}).
\end{remark} 

\begin{problem}[\cite{Pr}] \label{usegaga} 
  Let $A\colon \bC^\times \to \Mat_{n \times n}(\bC)$ be a meromorphic
  function such that $\det A \not\equiv 0$. Let $q \in \bC^\times$ with
  $|q| < 1$ and consider the $q$-difference equation
  \begin{equation} \label{eq:basic-qde}
    f(qz) = A(z) f(z)
  \end{equation}
  for $f\colon \bC^\times \to \Mat_{n \times n}(\bC)$. Show that
  \eqref{eq:basic-qde} has a meromorphic solution $f(z)$ such that $\det f
  \not\equiv 0$. (Hint: use the GAGA theorem and Hilbert theorem 90.)
\end{problem}

For semisimple groups, one can make an even stronger statement: 

\begin{theorem}[Harder, \cite{Ha}]\label{Har}
  If $G$ is semisimple, then a $G$-bundle on any smooth affine curve
  $X$ is trivial, i.e. on a general irreducible smooth curve it is trivialized if you remove any one point.\footnote{Later this theorem was generalized to families by Drinfeld and
Simpson, \cite{DS}.}
\end{theorem}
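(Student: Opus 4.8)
\emph{Proof plan.} We assume, as in the main case of interest, that $\mathbf k$ is algebraically closed (say $\mathbf k=\bC$), so that $G$ is split. The idea is first to reduce to $G$ almost simple and simply connected, and then to trivialize $E$ by correcting it at the points at infinity using strong approximation. For the first reduction, let $1\to Z\to\widetilde G\to G\to 1$ be the simply connected central cover, $Z$ finite; the obstruction to lifting the $G$-torsor $E$ to a $\widetilde G$-torsor lies in $H^2_{\et}(X,Z)$, which vanishes because $X$ is a smooth affine curve over an algebraically closed field (\'etale cohomological dimension $\le 1$ for prime-to-$p$ coefficients; and for $Z=\mu_p,\alpha_p$ in characteristic $p$ one uses $H^1_{\mathrm{Zar}}(X,\cO_X)=0$ together with the divisibility of $\Pic(X)$). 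So $E$ is induced from a $\widetilde G$-torsor $\widetilde E$, and trivializing $\widetilde E$ suffices; decomposing $\widetilde G\cong\prod_i\widetilde G_i$ into almost simple simply connected factors reduces us to that case.

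\emph{Local structure and gluing.} By Theorem~\ref{BSS}, $E$ is Zariski-locally trivial, hence trivial over $X\setminus S$ for some finite $S\subset X$. It is also trivial on each formal disk $\Spec\widehat\cO_x$ ($x\in S$): $G$ is smooth, so $H^1_{\et}(\widehat\cO_x,G)\xrightarrow{\sim}H^1(\mathbf k,G)$, and $H^1(\mathbf k,G)=1$ since $\mathbf k$ is algebraically closed. Gluing these trivial pieces, $E$ is given by transition elements $\theta=(\theta_x)_{x\in S}$ with $\theta_x\in G(\widehat K_x)$, $\widehat K_x=\mathrm{Frac}\,\widehat\cO_x$; by Beauville--Laszlo descent this collection determines $E$, and $E$ is trivial exactly when there are a single $\gamma\in G(\cO(X\setminus S))$ and elements $h_x\in G(\widehat\cO_x)$ with $\theta_x=\gamma h_x$ for all $x$.

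\emph{Strong approximation.} Let $\overline X$ be the smooth projective completion of $X$, so $D:=\overline X\setminus X\ne\emptyset$. Since $G$ is split it is isotropic at every place, so strong approximation holds relative to $D$: $G(\mathbf k(\overline X))$ is dense in the restricted product $\prod'_{v\notin D}G(\widehat K_v)$ (restricted with respect to $G(\cO_v)$). Applying this to the adele equal to $\theta_x$ at each $x\in S$ and to $1$ elsewhere, we get $\gamma\in G(\mathbf k(\overline X))$ that is as close as we wish to $\theta_x$ at each $x\in S$ and lies in $G(\cO_v)$ for every $v\notin D\cup S$, hence in $G(\cO(X\setminus S))$. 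Since $G$ is smooth, $\theta_x G(\widehat\cO_x)$ is open in $G(\widehat K_x)$, so if $\gamma$ is close enough then $\theta_x^{-1}\gamma\in G(\widehat\cO_x)$, giving the factorization $\theta_x=\gamma h_x$ from the previous step. Thus $E$ is trivial.

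\emph{Remarks.} The one substantial ingredient is strong approximation; in this function-field setting — a split, almost simple, simply connected group over $\mathbf k(\overline X)$ with a nonempty set $D$ of places at infinity — it is classical and due in essence to Harder, but it is not elementary, and this is the step I expect to be the real obstacle. For classical $G$ one can bypass it: an $\SL_n$-torsor on $X$ is a rank-$n$ vector bundle with trivial determinant, hence a finitely generated projective $\cO(X)$-module of trivial Steinitz class, hence free, and an $\SL_n$-torsor trivial as a $\GL_n$-torsor is trivial because $\cO(X)^\times=H^0(X,\mathbb{G}_m)\to H^1(X,\SL_n)$ is the zero map (as $\diag(u,1,\dots,1)$ lifts $u$ globally); the symplectic case is similar via symplectic bases over the Dedekind ring $\cO(X)$. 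For exceptional $G$ there is no such shortcut. One can also replace the last two steps by reducing $E$ first to a Borel $B$ (using properness of $G/B$) and then to a maximal torus $T$ (using that the unipotent radical of $B$ has vanishing $H^1$ over the affine $X$), so $E\cong E_T\times_T G$; but showing this induced bundle is trivial — i.e.\ that $H^1(X,T)\to H^1(X,G)$ is the trivial map — still requires an approximation-type argument rather than a purely cohomological vanishing.
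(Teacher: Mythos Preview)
The paper does not prove this theorem; it is stated with attribution to Harder and then used as a black box in the double-quotient description of $\Bun_G(X)$ in Section~4. There is nothing in the paper to compare your argument against.

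That said, your outline is correct and is the standard route. A few small remarks. The center $Z$ of a simply connected semisimple group is always of multiplicative type (a product of $\mu_n$'s), so the $\alpha_p$ case never arises and can be dropped. Your identification of strong approximation as the substantive step is exactly right; over the function field $\mathbf k(\overline X)$ with $\mathbf k$ algebraically closed, this is often packaged as the \emph{uniformization theorem} --- surjectivity of $G(\cO(X\setminus S))\to\prod_{x\in S}\Gr_{G,x}$ for $G$ simply connected --- and its proof is precisely what Harder supplies. So you have reduced the theorem to the form in which it is actually proved, rather than bypassed it, which is the honest thing to do and what your final paragraph acknowledges. Your side remarks on $\SL_n$ and $\Sp_{2n}$ via projective modules over the Dedekind ring $\cO(X)$, and on reduction to a maximal torus via a $B$-structure (cf.\ Problem~\ref{Bstr}), are also accurate, including the observation that the torus route still leaves a nontrivial step.
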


Note that Theorem \ref{Har} is false for non-semisimple reductive groups! For instance, \eqref{ses1} implies that on $X \setminus x$, we have an identification
\[ \Pic(X \setminus x) = \Pic(X) / \inner{\cO(x)} \cong \Jac(X). \]
So if $g>0$, line bundles on $X$ are generally not trivialized by removing a point, and one must remove several points. Furthermore, without knowing the line bundle, we don't know which points to remove so that the bundle
becomes trivial. Said differently, given any finite set of points on $X$,
there exists a line bundle on $X$ which is not trivialized after removing those
points.

\begin{problem}\label{bunelcur}
  Let $\cL$ be a line bundle of non-zero degree on a complex elliptic curve
  $X$. Show that there exists a point $x \in X$ such that $\cL|_{X \setminus 
    x}$ is trivial. Is the same true on a genus $2$ curve?
\end{problem}

\section{Moduli of \texorpdfstring{$G$}{G}-bundles on smooth projective curves}
\label{sec:BunG}

\subsection{The stack of \texorpdfstring{$G$}{G}-bundles}
For any $X$ and $G$, let
\[ \Bun_G(X)(\mathbf k) \coloneqq \left\{\begin{array}{c}\text{isomorphism classes of principal }G\text{-bundles}\\\text{on } X \text{ defined over }\mathbf k\end{array}\right\}. \]
More generally, we can take an
extension of scalars: if $A$ is a commutative $\mathbf k$-algebra, then there
is similarly the set $\Bun_G(X)(A)$ of isomorphism classes of $G$-bundles on $X$ defined over $A$, with clutching maps now being regular
functions $U_i \cap U_j \to G$ defined over $A$. One can also say that
\[ \Bun_G(X)(A) \coloneqq \Map(\Spec A, \Bun_G(X)), \]
which is, by definition, the set of isomorphism classes of principal
$G$-bundles on the product $\Spec A \times X$. In other words, there is a {\it
  functor}
\[ \Bun_G(X)\colon A \mapsto \Bun_G(X)(A) \]
from the category of commutative $\mathbf k$-algebras to the category of sets. 
This is the kind of functor we consider in algebraic geometry when we
define schemes --- Grothendieck's {\it functor of points}. However,
our functor $\Bun_G(X)$ is not representable by a scheme; rather, it
is only representable by a more complicated object called an {\it algebraic
stack}. The reason is that bundles, unlike points of a scheme, can have nontrivial automorphisms. 

To keep this text short, we will not systematically discuss the notion of a stack, and will keep the exposition somewhat informal (a detailed discussion can be found, for example, in \cite{O}). As we have just mentioned, the main difference between schemes and stacks is that in schemes, points carry no additional data, but in stacks, every point comes with a group of automorphisms. 

\begin{example}
  Let $G$ be a finite group and let $*$ denote a point. Then there is
  a stack $[*/G]$, also sometimes denoted $BG$, called the {\it
    classifying stack for $G$}. Its functor of points is given by
  \[ \Map(S, [*/G]) = \{\text{principal }G\text{-bundles on } S\}. \]
  The right hand side is a category: it has one object, but this
  object has many automorphisms. So, geometrically, $[*/G]$ has one
  point, but this point has automorphisms.
\end{example}

This suggests that we should think of $\Bun_G(X)(A)$ as a category (more specifically, a groupoid, i.e., a category where all morphisms are isomorphisms) rather than just a set: its objects are $G$-bundles on $X$ defined over $A$ and morphisms are isomorphisms of such bundles.

Algebraic stacks are a sort of globalization of the notion of the
quotient of a variety (or, more generally, a scheme) by a group action. In particular, there is the important
special case of a {\it quotient stack}. Namely, if $Y$ is a variety and $H$ is
an affine algebraic group acting on $Y$, then there is the stack $[Y/H]$ with
functor of points given by
\[ \Map(S, [Y/H]) = \left\{\begin{array}{c} \text{principal }H\text{-bundles } P \to S\\\text{with an }H\text{-equivariant map } P \to Y\end{array}\right\}. \]
This data can be written as the Cartesian square
\[ \begin{tikzcd}
  P \ar{r} \ar{d} & Y \ar{d} \\
  S \ar{r} & {[Y/H]}.
\end{tikzcd} \]
If the $H$-action on $Y$ is sufficiently nice (e.g., free), then $Y/H$ is a nice
topological space, but in general it is not (it has poor separation properties), and the idea of stacks is to never take the quotient and to work equivariantly instead.

To be clear, a representation of a stack as $[Y/H]$, if exists, is not unique, as $[Y/H] = [Y\times_H H' / H']$ whenever $H\subset H'$. In particular, given an inclusion $H\hookrightarrow \GL_n$, 
we can write $[Y/H]$ as $[\widetilde Y/\GL_n]$, where 
$\widetilde Y\coloneqq Y\times_H \GL_n$. Thus we may always assume without loss of generality that $H=\GL_n$.

It turns out that $\Bun_G(X)$ is not a quotient stack of an algebraic variety by an algebraic group; for example 
$\Bun_{\GL_1}(\mathbb P^1)=\mathbb Z$, so it has infinitely many components. 
Yet this stack can be written as a nested union of open substacks $\Bun_G(X)_n$, $n\ge 0$, which are of the form $[Y_n/H_n]$ where $Y_n$ is a smooth variety and $H_n$ an affine algebraic group (and we may arrange that $H_n=\GL_{m_n}$).

Thus $\Bun_G(X)(\mathbf k)$ has a natural topology (taking quotient topology on $\Bun_G(X)_n$ and then the inductive limit in $n$), but this topology is very non-separated; in particular, $\mathbf k$-points do not have to be closed. This is in contrast to schemes, where $\mathbf k$-points are closed by definition.

\begin{example} \label{pgl}
We see that if $G=\GL_1$ then 
$\Bun_G(X)=\Pic(X)$ is a stack which is not a scheme, because $\Aut(\cL) = {\mathbf k}^\times$ for any line bundle $\cL$; it is usually called the {\it Picard groupoid} of $X$ (this is the category whose objects are line bundles on $X$ and morphisms are isomorphisms of line bundles). But this stack is not so different from a scheme:  every point has the {\it same} automorphism group, and we can {\it    rigidify} to get a scheme. To ``rigidify'' here means to consider
  bundles along with the extra data of an element in the fiber over
  some fixed point $x_0 \in X$. This extra data kills the automorphism
  group. 

  In contrast, if $G = \PGL_2$, and $X =
  \bP^1$, the simplest possible curve, then we can have automorphism groups of arbitrarily large dimension. The more degenerate the bundle becomes, the bigger the automorphism group will be. So in this case $\Bun_G(X)$ is already a ``true" stack. 

Indeed, consider the $\PGL_2$-bundle $E_n \coloneqq \cO(n) \oplus \cO(0)$
  for $n > 0$, whose clutching function is the diagonal matrix $g(z) =
  \begin{pmatrix} z^n & 0\\ 0 & 1\end{pmatrix}$. Automorphisms of $E_n$ have the form
 $\begin{pmatrix} a_{11} & a_{12} \\ a_{21} & a_{22} \end{pmatrix},$
where $a_{11}, a_{22} \in \bC$, $a_{12} \in \Hom(\cO(0), \cO(n)) = H^0(\cO(n))$ lives in an
  $(n+1)$-dimensional space, and $a_{21} \in \Hom(\cO(n), \cO(0)) =
  H^0(\cO(-n)) = 0$. The constraint that the automorphism is
  an element in $\PGL_2$ means that we can set $a_{22}=1$. Putting it all
  together, we get that $\dim \Aut(E_n) = (n + 1) + 1 = n + 2$.
\end{example}

\begin{remark}\label{genus>2} If the genus of $X$ is $\ge 2$ and the group $G$ is semisimple and adjoint (i.e., has trivial center) then one can show that a generic $G$-bundle on $X$ has no non-trivial automorphisms. 
This means that the stack $\Bun_G(X)$ has a dense open set of ``generic" bundles, which is a scheme (in fact, a smooth algebraic variety). Yet understanding the geometry of $\Bun_G(X)$ requires considering arbitrarily degenerate bundles, around which this stack is no longer a scheme and has a very complicated structure. 
\end{remark} 

\subsection{Classification of vector bundles on \texorpdfstring{$\bP^1$}{P1}}

\begin{theorem}\label{grrank2}
  Every rank $2$ vector bundle $E$ on $\bP^1$ is isomorphic to $\cO(n)
  \oplus \cO(m)$ for unique $n\ge m$.
\end{theorem}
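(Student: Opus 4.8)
The plan is to prove this by reducing to the Borel--Springer--Steinberg result (Theorem~\ref{BSS}), or rather just using the structure of the curve $\bP^1$ directly: namely, that $\bP^1$ is covered by the two affine charts $U_0=\bA^1$ and $U_\infty=\bP^1\setminus 0$, on each of which every vector bundle is trivial (since $\mathbf k[x]$ is a PID and finite projective modules over a PID are free). So $E$ is determined by a single clutching function $g\colon\bG_m\to\GL_2$, i.e.\ an element of $\GL_2(\mathbf k[z,z^{-1}])$, and the isomorphism classes of rank~$2$ bundles correspond to double cosets $\GL_2(\mathbf k[z])\backslash\GL_2(\mathbf k[z,z^{-1}])/\GL_2(\mathbf k[z^{-1}])$. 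The content of the theorem is that each such double coset has a unique representative of the form $\diag(z^n,z^m)$ with $n\ge m$.

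First I would establish existence. Given $g\in\GL_2(\mathbf k[z,z^{-1}])$, I want to multiply on the left by an element of $\GL_2(\mathbf k[z])$ and on the right by an element of $\GL_2(\mathbf k[z^{-1}])$ to bring it to diagonal form $\diag(z^n,z^m)$. One clean way: consider the line subbundle of $E$ generated by a section. Concretely, look at the first column of $g$, clear denominators, and extract the minimal-degree part --- this amounts to a ``division algorithm'' over the Laurent polynomial ring, performing row and column operations (which correspond exactly to left multiplication by $\GL_2(\mathbf k[z])$ and right multiplication by $\GL_2(\mathbf k[z^{-1}])$). After one such reduction step the matrix becomes block-triangular, then diagonalizable because we are in the $2\times 2$ case: an upper-triangular matrix $\begin{pmatrix} z^n & p(z,z^{-1})\\ 0 & z^m\end{pmatrix}$ can be cleaned up since $p$ is a Laurent polynomial and we can kill its terms of degree $\ge n$ using the top-left entry with a column operation over $\mathbf k[z^{-1}]$, and its terms of degree $<n$ (in particular $\le m-1<n$) --- wait, more carefully, we split $p=p_+ + p_-$ into the part of degree $\ge n$ (absorbed by a $\mathbf k[z]$ row operation... ) Let me instead phrase it as: this is the Euclidean-algorithm content, and an explicit bookkeeping shows we can always reach $\diag(z^n,z^m)$; after swapping the two basis vectors if needed we may assume $n\ge m$. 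Alternatively, and perhaps more conceptually, I would cite that $E\otimes\cO(-m)$ has a section (for $m$ the minimal twist making $H^0$ nonzero), giving an exact sequence $0\to\cO\to E\otimes\cO(-m)\to\cO(n-m)\to 0$ which splits because $\Ext^1(\cO(n-m),\cO)=H^1(\cO(m-n))=0$ once we have arranged $n-m\ge 0$, i.e.\ $n\ge m$.

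For uniqueness, suppose $\cO(n)\oplus\cO(m)\cong\cO(n')\oplus\cO(m')$ with $n\ge m$ and $n'\ge m'$. Computing $H^0$ of the bundle tensored with $\cO(-k)$ for varying $k\in\bZ$ recovers the multiset $\{n,m\}$: indeed $\dim H^0(\cO(n-k)\oplus\cO(m-k))$, as a function of $k$, is $(n-k+1)_+ + (m-k+1)_+$, and from the two ``corners'' where this piecewise-linear function changes slope one reads off $n$ and $m$ (for instance, $n$ is the largest $k$ with $H^0(E(-k))\ne 0$, and then $m$ is determined by the dimension jump, or by applying the same to $E^\vee$). Hence $\{n,m\}=\{n',m'\}$ as multisets, and the ordering condition forces $n=n'$, $m=m'$.

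The main obstacle I expect is making the existence argument fully rigorous without circularity: the slick ``take the maximal line subbundle'' argument needs one to know that a line subbundle of maximal degree exists (degrees of line subbundles are bounded above --- this is where one uses properness of $\bP^1$, or concretely that a nonzero map $\cO(k)\to E$ forces $k\le n$) and that the quotient is again a line bundle rather than a torsion-containing sheaf (which requires saturating the subbundle). The honest elementary route --- running the division algorithm on the clutching matrix over $\mathbf k[z,z^{-1}]$ --- avoids these subtleties but requires careful bookkeeping of degrees to guarantee termination; I would present that computation in the $2\times2$ case where it is short, rather than invoking the general Birkhoff--Grothendieck machinery.
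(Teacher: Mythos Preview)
Your uniqueness argument is fine and matches the paper's. Your existence argument, however, has a real gap at exactly the point you glossed over.

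In your ``slick'' route you take the maximal line subbundle $\cO(m)\hookrightarrow E$, get a short exact sequence $0\to\cO(m)\to E\to\cO(n)\to 0$ with $n=\deg E-m$, and want it to split. The splitting requires $\Ext^1(\cO(n),\cO(m))=H^1(\cO(m-n))=0$, i.e.\ $m\ge n-1$. You wrote the inequality backwards (``$n-m\ge 0$'' does \emph{not} make this $H^1$ vanish), but more importantly you never prove the correct inequality. The obstacles you flagged --- boundedness of line-subbundle degrees, saturation --- are easy; the hard step is showing that the \emph{maximal} line subbundle has degree $\ge\tfrac12\deg E$. This is the entire content of the theorem, and asserting it ``once we have arranged $n\ge m$'' is circular.

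The paper's proof faces the same issue and resolves it by an explicit linear-algebra computation on the clutching matrix: starting from \emph{any} short exact sequence $0\to\cO(m)\to E\to\cO(n)\to 0$, it writes down the transition function $\left(\begin{smallmatrix}z^m&f(z)\\0&z^n\end{smallmatrix}\right)$, reduces $f$ modulo column/row operations, and counts equations vs.\ unknowns to exhibit a nonzero map $\cO(r)\to E$ whenever $r\le\tfrac{m+n}{2}$. This shows the maximal $R$ satisfies $R\ge\tfrac{m+n}{2}$, whence the sequence $0\to\cO(R)\to E\to\cO(m+n-R)\to 0$ splits.

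There is a shorter way to fill your gap that you might prefer: if $m$ is maximal with $\Hom(\cO(m),E)\ne 0$, then $H^0(E(-m-1))=0$, so by Riemann--Roch $\chi(E(-m-1))=\deg E-2m\le 0$, giving $m\ge\deg E-m=n$. This is cleaner than the paper's clutching computation, but you must actually say it; without it, your argument is incomplete.
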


\begin{proof}
  Uniqueness is clear, because we can recover
  \[ n = \max\{i : \Hom(\cO(i), E) \neq 0\} \]
  from $E$, and $m=\deg E-n$. 
  
  Existence of $(n,m)$ is more interesting. We start with a simple lemma. 
  
  \begin{lemma}\label{l1} Let $E$ be a vector bundle on $\mathbb P^1$.
  
 (i) There exists $m\in \mathbb Z$ such that $\Hom(\cO(m),E)\ne 0$. 
   
 (ii) If $\varphi\colon \cO(m)\to E$ is a nonzero homomorphism then for some $r\ge 0$ there exists a nonvanishing homomorphism $\widehat\varphi\colon \cO(m+r)\to E$, with $r>0$ unless 
  $\varphi$ is nonvanishing.
 
 (iii) If $m\ge n-1$ then any short exact sequence 
 $$
    0 \to \cO(m) \to E \to \cO(n) \to 0
 $$
   splits. 
 \end{lemma} 
  
  \begin{proof} (i) Let $\varphi$ be a nonzero regular section of $E$ over $\mathbb A^1$ 
  (it exists since $E|_{\mathbb A^1}$ is trivial as noted above). Then 
  $\varphi$ extends to a rational section of $E$ over $\mathbb P^1$ 
  of some order $m$ at $\infty$. Thus $\varphi$ defines 
  a regular map $\varphi\colon \cO(m)\to E$. 
  
  (ii) If $\varphi$ vanishes only at points $p_1, \ldots, p_s$ to orders
  $r_1, \ldots, r_s$, then it defines a nonvanishing map 
  \[ \widehat\varphi\colon \cO(m) \otimes \cO(\sum_{i=1}^s r_i p_i) = \cO(m + \sum_{i=1}^s r_i)\to E. \].
  
  (iii) Such short exact sequences are classified by
 \begin{equation}\label{spl} \Ext^1_{\bP^1}(\cO(n), \cO(m)) = H^1(\bP^1, \cO(m-n)) \cong H^0(\bP^1, \cO(n-m-2))^* 
 \end{equation}
  where $\cong$ is Serre duality using the canonical bundle $\cK =
  \cO(-2)$. Hence if $m \ge n-1$, this Ext group vanishes
  and thus the sequence splits. 
  \end{proof} 
  
  Now fix a rank $2$ vector
  bundle $E$ on $\bP^1$. By Lemma \ref{l1}(i),(ii) there is a 
  nonvanishing map $\varphi\colon \cO(m) \to E$ for some $m\in \mathbb Z$. 
 Hence we obtain a short exact sequence
  \begin{equation} \label{eq:P1-rank-2-bundle-SES}
    0 \to \cO(m) \xrightarrow{\varphi} E \to \cO(n) \to 0
  \end{equation}
  where $n=\deg E-m$. (The cokernel of $\varphi$ is a line bundle since $\varphi$ is nonvanishing, so it has the form $\cO(n)$ for some
  $n \in \bZ$, and it is clear that $n=\deg E-m$.) 
 Let $R$ be the maximum of integers $r$ such that  $\Hom(\cO(r), E) \neq 0$. It exists and satisfies $R\le \max(m,n)$ by applying $\Hom(\cO(r), -)$
  to \eqref{eq:P1-rank-2-bundle-SES}; namely, for $r>\max(m,n)$, both
  $\Hom(\cO(r), \cO(m))$ and $\Hom(\cO(r), \cO(n))$ vanish, so
  $\Hom(\cO(r), E) = 0$ as well. 
  
  Recall that $E$ is realized by the
  clutching function
  \[ g(z) = \begin{pmatrix} z^m & f(z) \\ 0 & z^n \end{pmatrix} \]
  where $f(z)$ is a Laurent polynomial. We can modify $g$ by $g\mapsto h_1 \circ g
  \circ h_2^{-1}$ where
  \[ h_1 = \begin{pmatrix} 1 & \varphi(z) \\ 0 & 1 \end{pmatrix}, \qquad h_2 = \begin{pmatrix} 1 & \psi(z^{-1}) \\ 0 & 1 \end{pmatrix}, \]
and $\varphi,\psi\in \mathbf k[u]$. Using such a modification, we can reduce uniquely
  to the case where $f$ has no monomials $z^s$ except where $m < s <
  n$. A map $\cO(r) \to E$ is a section of the bundle $\cO(-r) \otimes E$ with
  transition map
  \[ \begin{pmatrix} z^{m-r} & z^{-r} f(z)  \\ 0 & z^{n-r} \end{pmatrix}. \]
  Hence a regular section of this bundle is a pair $\binom{x_0(z)}{y_0(z)}, \binom{x_\infty(z^{-1})}{
  y_\infty(z^{-1})}$ of vector-valued polynomials, such that
  \[ \begin{pmatrix} z^r x_0(z) \\ z^r y_0(z) \end{pmatrix} = \begin{pmatrix} z^m & f(z) \\ 0 & z^n \end{pmatrix} \begin{pmatrix} x_\infty(z^{-1}) \\ y_\infty(z^{-1}) \end{pmatrix}. \]
  In particular, the left hand sides $z^r y_0$ and $z^r x_0$ have no
  monomials of degree $< r$. So we just need to find $x_\infty$ and
  $y_\infty$ so that the right hand sides also have no monomials of
  degree $< r$. This means that $\deg y_\infty \le n-r$ and $f(z)
  y_\infty(z)$ has no terms of degree between $m+1$ and $r$. This
  gives  $r-m$ homogeneous linear equations in $n-r+1$ unknowns. Hence non-zero
  solutions definitely exist if $r - m < n-r+1$, or, equivalently, $r \le
  \frac{m+n}{2}$. So we have $R
  \ge \frac{m+n}{2}$. Moreover, any nonzero 
  map $\cO(R) \to E$ must be an embedding,
  otherwise $R$ is not maximal by Lemma \ref{l1}(ii). So
  there is a short exact sequence
  \[ 0 \to \cO(R) \to E \to \cO(m+n-R) \to 0 \]
  Since $R\ge m+n-R$, this short exact sequence splits by Lemma \ref{l1}(iii), as desired.
\end{proof}

\begin{corollary}[Grothendieck, \cite{Gr}]\label{grothen}
  A rank $n$ vector bundle on $\bP^1$ is uniquely of the form
  \[ \cO(m_1) \oplus \cdots \oplus \cO(m_n) \]
  for integers $m_1 \ge \cdots \ge m_n$.
\end{corollary}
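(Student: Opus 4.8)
The plan is to argue by induction on the rank $n$, following the same peeling-off-a-line-bundle strategy as in the proof of Theorem~\ref{grrank2}. The base case $n=1$ is the classification of line bundles on $\bP^1$ recalled above: every line bundle is $\cO(m)$, and $m=\deg$ is visibly an isomorphism invariant, which also settles uniqueness in rank $1$. For the inductive step, fix a vector bundle $E$ of rank $n\ge 2$ and assume the statement in all ranks $<n$.

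First I would produce a line subbundle of $E$ of \emph{maximal} degree. By Lemma~\ref{l1}(i),(ii) there is a nonvanishing map $\cO(m_0)\hookrightarrow E$; since $\varphi$ is nonvanishing its cokernel $Q$ is a vector bundle of rank $n-1$, so by the inductive hypothesis $Q\cong\bigoplus_i\cO(a_i)$, and applying $\Hom(\cO(r),-)$ to $0\to\cO(m_0)\to E\to Q\to 0$ shows $\Hom(\cO(r),E)=0$ once $r>\max(m_0,\max_i a_i)$. Hence $R\coloneqq\max\{r:\Hom(\cO(r),E)\ne 0\}$ is well defined. Choose a nonzero $\varphi\colon\cO(R)\to E$; by Lemma~\ref{l1}(ii) and maximality of $R$ it must be nonvanishing (otherwise it would factor through some nonvanishing $\cO(R+r')\hookrightarrow E$ with $r'>0$, contradicting maximality). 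Thus $F\coloneqq E/\cO(R)$ is a vector bundle of rank $n-1$, and the inductive hypothesis gives $F\cong\bigoplus_{i=1}^{n-1}\cO(b_i)$ with $b_1\ge\cdots\ge b_{n-1}$, fitting into a short exact sequence $0\to\cO(R)\to E\to F\to 0$.

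The crux is the claim that $b_i\le R$ for every $i$, and this is where maximality of $R$ is used. Twisting the short exact sequence by $\cO(-R-1)$ and taking cohomology, the terms $H^0(\bP^1,\cO(-1))$ and $H^1(\bP^1,\cO(-1))$ both vanish, so $H^0(E(-R-1))\cong H^0(F(-R-1))=\bigoplus_i H^0(\bP^1,\cO(b_i-R-1))$; but $H^0(E(-R-1))=\Hom(\cO(R+1),E)=0$ by the choice of $R$, which forces $b_i-R-1<0$, i.e.\ $b_i\le R$. Consequently $\Ext^1_{\bP^1}(F,\cO(R))=\bigoplus_i H^1(\bP^1,\cO(R-b_i))=0$ by the computation in \eqref{spl} (since $R-b_i\ge 0\ge -1$), so the sequence splits and $E\cong\cO(R)\oplus\bigoplus_i\cO(b_i)$; reordering the $n$ exponents in nonincreasing order gives the asserted form, with top exponent $R$.

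For uniqueness I would use the invariant $f(r)\coloneqq\dim\Hom(\cO(r),E)$. If $E\cong\bigoplus_k\cO(m_k)$ then $f(r)=\sum_k\max(m_k-r+1,0)$, whence $f(r)-f(r+1)=\#\{k:m_k\ge r\}$; since $f$ depends only on $E$, so does this count for every $r$, and it determines the multiset $\{m_k\}$ and hence the nonincreasing tuple $(m_1,\dots,m_n)$. I expect the main obstacle to be recognizing that one cannot simply take the line subbundle supplied by Lemma~\ref{l1}(i),(ii): if its degree is too small the relevant $\Ext^1$ need not vanish, so the extension presenting $E$ need not split. The fix is to replace it by a line subbundle of maximal degree $R$, after which the inequality $b_i\le R$ is the short cohomological computation above. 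A secondary technical point is the well-definedness (boundedness above) of $R$, handled as in the second paragraph by the inductive hypothesis applied to the quotient by an arbitrary line subbundle.
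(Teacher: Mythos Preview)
Your proof is correct and follows the same overall induction-on-rank strategy as the paper: pick a line subbundle of maximal degree $R$, write the quotient as $\bigoplus_i\cO(b_i)$ by induction, show $b_i\le R$ so that the relevant $\Ext^1$ vanishes, and split. The difference is in how the inequality $b_i\le R$ is established. The paper pulls back each summand $\cO(m_j)$ to a rank-$2$ subbundle $E_j\subset E$ and invokes Theorem~\ref{grrank2} on $E_j$ to conclude $m_0\ge m_j$; thus the general case is reduced to the separately proved rank-$2$ case. You instead twist the short exact sequence by $\cO(-R-1)$ and use $H^0(\cO(-1))=H^1(\cO(-1))=0$ to get $H^0(F(-R-1))\cong H^0(E(-R-1))=0$ directly. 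Your route is a bit more self-contained: it does not need the full rank-$2$ classification as an input, only the $\Ext^1$ computation \eqref{spl}, and it handles all summands at once rather than one $E_j$ at a time. The paper's route, on the other hand, makes the corollary a genuine corollary of Theorem~\ref{grrank2}. Your treatment of the boundedness of $R$ and of uniqueness (via $f(r)-f(r+1)$) is also slightly more explicit than the paper's.
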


\begin{proof}
  The proof is by induction on the rank $n$. Let us prove the statement for rank $n+1$ assuming
  it is known for smaller ranks. Let $m_0$ be the maximal possible integer $r$ for
  which $\Hom(\cO(r), E) \neq 0$. By the induction hypothesis and Lemma \ref{l1}(i),(ii), we have
  an extension
\begin{equation}\label{ses}
0 \to \cO(m_{0}) \to E \to \cO(m_1) \oplus \cdots \oplus \cO(m_n) \to 0. 
\end{equation}
  For $j = 1, \ldots, n$, consider the subbundle $E_j \subset E$
  which is the preimage of $\mathcal O(m_j)$. Then we have a short
  exact sequence
  \[ 0 \to \cO(m_0) \to E_j \to \cO(m_j) \to 0, \]
  and $m_{0}$ is the maximal possible integer $r$ for which
  $\Hom(\cO(r), E_j) \neq 0$. So by Theorem \ref{grrank2}, $E_j = 
  \cO(m_{0})\oplus \cO(m_j)$ and $m_0 \ge m_j$. Since this is true for
  any $j = 1, \ldots, n$, the sequence \eqref{ses}
  splits by Lemma \ref{l1}(iii) and $E \cong \cO(m_0) \oplus \cO(m_1)\oplus \cdots \oplus \cO(m_n)$, as
  desired.
\end{proof}

\begin{problem} \label{PGLSL}
  Classify $\PGL_n$-bundles and $\SL_n$-bundles on $\bP^1$.
\end{problem}

\begin{problem} \label{hankel}
(i)  Let $E$ be the rank $2$ vector bundle on $\bP^1$ given by the
  short exact sequence
  \[ 0 \to \cO(0) \to E \to \cO(m) \to 0 \]
  for $m \ge 2$, with transition function (from $U_\infty$ to $U_0$)
  \[ g(z) \coloneqq \begin{pmatrix} 1 & f(z) \\ 0 & z^{m} \end{pmatrix} \]
  for some polynomial $f(z) = a_1 z + \cdots + a_{m-1}
  z^{m-1}$. Given $\frac{m}{2}\le n\le m$, 
  find the condition on $a_1, \ldots, a_{m-1} \in \bC$
  such that 
 $E \cong \cO(k) \oplus \cO(m-k)$
for some $\frac{m}{2}\le k\le n$.   
  
(ii) For even $m$ and $n=\frac{m}{2}$, find a polynomial $H(a_1,\ldots,a_{2n-1})$ 
such that $E\cong \cO(n)\oplus \cO(n)$ iff $H(a_1,\ldots,a_{2n-1})\ne 0$.  
\end{problem}

\begin{problem} \label{Bstr}
 Let $B \subset \GL_n$ be the subgroup of
  upper-triangular matrices. 
  Show that any $\GL_n$-bundle $E$ on a smooth projective curve $X$
  admits a $B$-structure, i.e. is associated to a (non-unique)
  $B$-bundle.\footnote{This actually holds for any reductive group, see \cite{DS} and references therein.}
\end{problem}

\subsection{Bundles with level structure and local presentations of \texorpdfstring{${\rm Bun}_{\PGL_2}(\mathbb P^1)$}{Bun\_PGL2(P1)}} 

For $G = \PGL_2$, Theorem~\ref{grrank2} (and the solution to Problem~\ref{PGLSL}) implies that every $G$-bundle $E$ on $\mathbb P^1$ is of the form $\mathcal O(n)\oplus \mathcal O(0)$ for a unique $n=N(E)\ge 0$. This allows us to describe in more detail the structure of the stack ${\rm Bun}_G(\mathbb P^1)$.

To do so, consider the nested sequence of open substacks ${\rm Bun}_{G}(\mathbb P^1)_n\subset {\rm Bun}_{G}(\mathbb P^1)$
of bundles $E$ with $N(E)\le n$; i.e., for a commutative algebra $A$, ${\rm Bun}_{G}(\mathbb P^1)_n(A)$ 
is the set of isomorphism classes of $G$-bundles on ${\rm Spec}A\times \mathbb P^1$ which at every geometric point of ${\rm Spec}A$ give a bundle on $\mathbb P^1$ with $N(E)\le n$. We would like to present 
${\rm Bun}_{G}(\mathbb P^1)_n$ as the quotient $[Y_n/H_n]$ of a smooth variety $Y_n$ by an algebraic group $H_n$. There are, in fact, many different (albeit equivalent) ways to choose this presentation, which correspond to various ways 
to ``rigidify" the classification problem for $G$-bundles, so that the automorphism groups of all bundles with $N(E)\le n$ are killed.\footnote{Recall from Example \ref{pgl} that if $N(E)>0$ then ${\rm Aut}(E)$ has dimension 
$N(E)+2$, so these automorphism groups get arbitrarily large. Thus for large $n$ the group $H_n$ must be large as well, as it must contain these automorphism groups for all $E$ with $N(E)\le n$.}  

One convenient way to do so is to consider bundles $E$ equipped with {\it level $n$ structure} at $0$, i.e., with a trivialization over the {\it $n$-th 
infinitesimal neighborhood} $\widehat 0_n\coloneqq
{\rm Spec}(\mathbf k[z]/z^{n+1})$ of $0\in \mathbb P^1$. If we 
present bundles $E$ by trivializing them on $U_0,U_\infty$ and writing down the clutching function as above, 
then such a trivialization of $E$ over $\widehat 0_n$ differs from its trivialization 
over $U_0$ restricted to $\widehat 0_n$ by an element $h$ of the group $H_n\coloneqq G(\mathbf k[z]/z^{n+1})$. On the other hand, in Example \ref{pgl} we saw that an automorphism of $\cO (n)\oplus \cO (0)$ for $n>0$ is given in the chart $U_0$ by a matrix $(a_{ij})$ with $a_{22}=1$, $a_{11}={\rm const}$, $a_{21}=0$ and $a_{12}=p(z)$, where $p$ is a polynomial of degree $n$. On the other hand, if $n=0$ then an automorphism is given by a constant matrix. This implies 
that if $N(E)\le n$ then any non-trivial automorphism of $E$ changes the element $h$, so ${\rm Aut}(E,h)=1$. From this one can deduce that the  
stack $Y_n\coloneqq\widetilde{\rm Bun}_{G}(\mathbb P^1)_n$ classifying $G$-bundles $(E,h)$ on $\mathbb P^1$ with level $n$ structure at $0$ such that $N(E)\le n$ is actually a smooth algebraic variety with an action of $H_n$ (by changing $h$), and 
$$
{\rm Bun}_{G}(\mathbb P^1)_n=[Y_n/H_n].
$$
Note that the action of $H_n$ on $Y_n$ is not free: the stabilizer of $(E,h)$ is isomorphic to ${\rm Aut}(E)$. 

More generally, one can define $Y_n$ by taking $h$ to be a collection of level $n_i$ structures $h_i$ at any given distinct points $z_i\in \mathbb P^1$, so that $\sum_i (n_i+1)=n+1$. In this case $H_n$ will be $\prod_i G(\mathbf k[z]/z^{n_i+1})$. 
The above definition then corresponds to the case when all $z_i$ coalesce at $0$. 

Now consider the set ${\rm Bun}_{G}(\mathbb P^1)(\mathbf k)\cong \mathbb Z_{\ge 0}$ (where the bijection sends $E$ to $N(E)$), and let us ask what topology on $\mathbb Z_{\ge 0}$ is induced by the stack structure. First of all, note that ${\rm Bun}_{G}(\mathbb P^1)$ has two disjoint parts 
${\rm Bun}_{G}(\mathbb P^1)^{\rm even}$, ${\rm Bun}_{G}(\mathbb P^1)^{\rm odd}$, corresponding to bundles of even and odd degree. Thus we have a decomposition of topological spaces
$\mathbb Z_{\ge 0}=\mathbb Z_{\ge 0}^{\rm even}
\sqcup \mathbb Z_{\ge 0}^{\rm odd}$. Also since the sets $\Bun_{G}(\mathbb P^1)_n$ are open, we see that the subsets 
$$
U_r^{\rm even}=\lbrace 0,2,\ldots,2r-2\rbrace\subset \mathbb Z_{\ge 0}^{\rm even},\ U_r^{\rm odd}=\lbrace 1,3,\ldots,2r-1\rbrace\subset \mathbb Z_{\ge 0}^{\rm odd}
$$ 
are open for all $r\ge 0$. 

\begin{proposition} $U_r^{\rm even},U_r^{\rm odd}$ are the only proper open subsets of $\mathbb Z_{\ge 0}^{\rm even}, \mathbb Z_{\ge 0}^{\rm odd}$. In particular, the latter spaces are connected. 
\end{proposition}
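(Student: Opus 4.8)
\section*{Proof proposal}

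The plan is to reduce the statement to a single geometric input -- a specialization relation among the bundles $E_n=\mathcal O(n)\oplus\mathcal O(0)$ -- and then run a short point-set topology argument. Since $\mathbb Z_{\ge 0}^{\rm even}$ and $\mathbb Z_{\ge 0}^{\rm odd}$ are already known to form disjoint (clopen) parts, I work in one of them, say $\mathbb Z_{\ge 0}^{\rm even}$, which with the usual order on $\{0,2,4,\dots\}$ is order-isomorphic to $\mathbb N$. We already know that the sets $U_r^{\rm even}=\{0,2,\dots,2r-2\}$ are open (as they equal ${\rm Bun}_{\PGL_2}(\mathbb P^1)_{2r-2}\cap\mathbb Z_{\ge 0}^{\rm even}$), and these together with $\emptyset$ and the whole space are exactly the \emph{initial segments} (down-sets) of $\{0,2,4,\dots\}$. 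So it suffices to prove: every open subset of $\mathbb Z_{\ge 0}^{\rm even}$ is a down-set, i.e., if $2k$ lies in an open set $S$ and $0\le 2j<2k$ then $2j\in S$.

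The key lemma is the degeneration statement
\[ E_{m}\in\overline{\{E_{m-2}\}}\quad\text{in }{\rm Bun}_{\PGL_2}(\mathbb P^1),\qquad m\ge 2, \]
where the closure is taken in the underlying topological space. I would prove this by an elementary (Hecke) modification. Fix a point $p\in\mathbb P^1$ and the rank-$2$ bundle $F=\mathcal O(m-1)\oplus\mathcal O(0)$, with fibre $F_p=\mathbf k e_1\oplus\mathbf k e_2$, where $e_1$ comes from the $\mathcal O(m-1)$-summand. For a line $\ell\subset F_p$ set $F'_\ell:=\ker\!\big(F\twoheadrightarrow F_p/\ell\big)$; this is a rank-$2$ bundle of degree $m-2$. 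A direct computation gives $F'_\ell\cong\mathcal O(m-1)\oplus\mathcal O(-1)$ when $\ell=\mathbf k e_1$, and $F'_\ell\cong\mathcal O(m-2)\oplus\mathcal O(0)$ for every other $\ell$: in the latter case the bundle projection $F\to\mathcal O(0)$ restricts to a surjection $F'_\ell\to\mathcal O(0)$ whose kernel is $\mathcal O(m-2)$, and the resulting extension $0\to\mathcal O(m-2)\to F'_\ell\to\mathcal O(0)\to 0$ splits by Lemma~\ref{l1}(iii) since $m-2\ge 0$. Letting $\ell$ run over a line $\mathbb A^1\subset\mathbb P(F_p)$ through $[\mathbf k e_1]$ produces an algebraic family of rank-$2$ bundles on $\mathbb P^1$ -- a locally free subsheaf of $F\boxtimes\mathcal O_{\mathbb A^1}$ cut out by an elementary modification along $\{p\}\times\mathbb A^1$ -- whose fibre over $\mathbb A^1\setminus\{0\}$ is $\mathcal O(m-2)\oplus\mathcal O(0)=E_{m-2}$ and whose fibre over $0$ is $\mathcal O(m-1)\oplus\mathcal O(-1)$. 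The latter is isomorphic as a $\PGL_2$-bundle (tensor by $\mathcal O(1)$) to $E_m$, so the induced morphism $\mathbb A^1\to{\rm Bun}_{\PGL_2}(\mathbb P^1)$ sends $\mathbb A^1\setminus\{0\}$ to the single point $E_{m-2}$ and $0$ to $E_m$; continuity of the induced map of underlying topological spaces then gives $E_m\in\overline{\{E_{m-2}\}}$.

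Granting the lemma, iterate it and use transitivity of specialization ($x\in\overline{\{y\}}$ and $y\in\overline{\{z\}}$ imply $x\in\overline{\{z\}}$): for $j<k$ we obtain $E_{2k}\in\overline{\{E_{2j}\}}$, i.e.\ every open neighbourhood of the point $2k$ contains the point $2j$. Hence any open $S\ni 2k$ contains all $2j<2k$, so $S$ is a down-set. A down-set of $\{0,2,4,\dots\}$ is $\emptyset$, some $U_r^{\rm even}$, or the whole space; combined with the already-known openness of the $U_r^{\rm even}$, this shows the proper open subsets of $\mathbb Z_{\ge 0}^{\rm even}$ are precisely the $U_r^{\rm even}$. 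For connectedness: for $r\ge 1$ the set $U_r^{\rm even}$ is not closed, since its complement $\{2r,2r+2,\dots\}$ is not a down-set and hence not open; so $\emptyset$ and the whole space are the only clopen subsets. The odd case is identical.

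The main obstacle is essentially the whole content: it is the degeneration lemma, and within it the splitting-type computation for the modifications $F'_\ell$. (An alternative is to write an explicit clutching-function family degenerating $\mathcal O(m-1)\oplus\mathcal O(1)$ to $\mathcal O(m)\oplus\mathcal O(0)$ in the spirit of Problem~\ref{hankel}, but then one must analyze the Hankel-type condition that reads off the splitting type from the off-diagonal entry, which is messier.) One should also take mild care that the elementary modification genuinely yields a flat family of \emph{locally free} sheaves and that it induces a continuous map on underlying spaces, which is cleanest done by working inside ${\rm Bun}_{\PGL_2}(\mathbb P^1)_n=[Y_n/H_n]$ for $n\ge m$ and lifting the family to $Y_n$; the remaining point-set topology is routine.
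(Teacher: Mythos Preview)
Your proof is correct and follows the same overall strategy as the paper: reduce to the single specialization statement $E_m\in\overline{\{E_{m-2}\}}$ (equivalently, $k+2$ lies in the closure of $k$), then conclude by elementary point-set topology that open sets are exactly the initial segments. The difference lies in how the one-parameter degeneration is built. The paper works with the linear family of extensions
\[
0\to\cO(0)\to E_v\to\cO(k+2)\to 0,\qquad v\in V_k=\Ext^1_{\bP^1}(\cO(k+2),\cO(0))\cong H^0(\bP^1,\cO(k)),
\]
observing that $E_0\cong\cO(k+2)\oplus\cO(0)$ while for a suitable nonzero $w$ one has $E_w\cong\cO(k+1)\oplus\cO(1)$, hence $N(E_{\lambda w})=k$ for $\lambda\ne 0$ and $k+2$ for $\lambda=0$. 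You instead produce the family by an elementary (Hecke) modification of $F=\cO(m-1)\oplus\cO(0)$ at a point, varying the line $\ell\subset F_p$. Both constructions are short and self-contained; the paper's has the advantage that the parameter space is literally a vector space of $\Ext$ classes already computed in Lemma~\ref{l1}(iii), so no flatness check is needed, while yours is perhaps more geometric and makes the splitting-type jump transparent via the kernel computation. Your remark that one could alternatively argue via explicit clutching functions in the spirit of Problem~\ref{hankel} is exactly the direction the paper's argument points, since the $\Ext$ class is the off-diagonal entry $f(z)$ of the transition matrix.
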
 

\begin{proof} It suffices to show that the closure of any $k\in \mathbb Z_{\ge 0}$ contains $k+2$. To this end, consider the space of short exact sequences 
$$
0\to \cO(0)\to E\to \cO(k+2)\to 0;
$$
as we have seen, it is the vector space $V_k=\Ext^1_{\mathbb P^1}(\cO(k+2),\cO(0))=H^0(\mathbb P^1,\cO(k))^*$ of dimension $k+1$. 
For $v\in V_k$ let $E_v$ denote the corresponding bundle. 
It is clear that $E_0\cong \cO(k+2)\oplus \cO(0)$, and it is easy to show that 
if $E_v=\cO(k+2)\oplus \cO(0)$ then $v=0$. 
On the other hand, 
there is $w\in V_k, w\ne 0$ such that $E_w\cong \cO(k+1)\oplus \cO(1)$, which is isomorphic to $\cO(k)\oplus \cO(0)$ as a $G$-bundle (see Problem \ref{hankel}). 
Hence $E_{\lambda w}\cong \cO(k)\oplus \cO(0)$ as a $G$-bundle for any 
nonzero scalar $\lambda$. Thus the family of bundles $E_{\lambda w}$, $\lambda\in \mathbb A^1$ has 
$$
N(E_{\lambda w})=\begin{cases} k+2,\ \lambda=0\\ k,\ \lambda\ne 0\end{cases}
$$ 
It follows that $k+2$ lies in the closure of $k$ and we are done. 
\end{proof} 

In particular, it follows that $\Bun_{\SL_2}(\mathbb P^1)(\mathbf k)=\mathbb Z_{\ge 0}^{\rm even}$ with the above topology, and $\Bun_{\SL_2}(\mathbb P^1)$ is the nested union of 
open substacks $\Bun_{\SL_2}(\mathbb P^1)_n$, comprising bundles with 
$\Hom(\cO(n+1),E)=0$, which correspond to the open subsets $U_{n+1}^+$. 
Moreover, 
$$
\Bun_{\SL_2}(\mathbb P^1)_n\cong \Bun_{\PGL_2}(\mathbb P^1)_{2n}^+=\widetilde{\Bun}_{\PGL_2}(\mathbb P^1)_{2n}^+/H_{2n},
$$ 
where $\widetilde{\Bun}_{\PGL_2}(\mathbb P^1)_{2n}^+$
is the variety classifying pairs $(E,h)$ where $E$ is a $GL_2$-bundle of degree $2n$ with $N(E)\le 2n$ and $h$ is a level structure on $E$ at $0$ of order $2n$. 

In conclusion, let us give a concrete realization of the variety $\widetilde{\Bun}_{\PGL_2}(\mathbb P^1)_{2n}^+$, by embedding it into the Grassmannian 
${\rm Gr}(2n,4n+2)$. To this end, let $(E,h)\in \widetilde{\Bun}_{\PGL_2}(\mathbb P^1)_{2n}^+$ and consider the space of sections 
$H^0(\mathbb P^1,E\otimes \cO(-1))$, which has dimension $2n$. 
For each $s\in H^0(\mathbb P^1,E\otimes \cO(-1))$, the trivialization $h$ 
gives rise to its $2n$-th Taylor approximation $\nu_h(s)=\binom{s_{1}}{s_{2}}$, 
where $s_{j}\in \mathbf k[z]/z^{2n+1}$. This defines a linear map 
$$
\nu_h\colon H^0(\mathbb P^1,E\otimes \cO(-1))\to \mathbf k^2[z]/z^{2n+1}\cong \mathbf k^{4n+2}.
$$
It follows from Theorem \ref{grrank2} that $\nu_h$ is injective, so we obtain an $H_{2n}$-equivariant map 
$$
\nu\colon \widetilde{\Bun}_{\PGL_2}(\mathbb P^1)_{2n}^+\to {\rm Gr}(2n,\mathbf k^2[z]/z^{2n+1})={\rm Gr}(2n,4n+2).
$$
To describe its image, note that $\widetilde{\Bun}_{\PGL_2}(\mathbb P^1)_{2n}^+$ has $n+1$ $H_{2n}$-orbits, i.e., those of the bundles $E_j=\cO(n+j)\oplus \cO(n-j)$, $0\le j\le n$, with the standard trivializations $h^j$ at $0$. For these bundles, we have 
$$
\nu(E_j,h^j)=V_j\coloneqq\lbrace \binom {s_1}{s_2}\colon \deg(s_1)\le n+j-1,\ \deg(s_2)\le n-j-1\rbrace.
$$
Thus $\im\nu=\sqcup_{0\le j\le n}H_{2n}\cdot V_j$. But it is easy to check that 
$\overline{H_{2n}\cdot V_0}\supset V_j$, so we get that 
$$
\im\nu\subset \overline{H_{2n}\cdot V_0}
$$
is an open dense $H_{2n}$-invariant subset. 

One can also show that $\nu$ is injective; in fact it is an embedding. This gives rise to an explicit realization of $\widetilde{\Bun}_{\PGL_2}(\mathbb P^1)_{2n}^+$ as a locally closed subvariety of dimension $6n$ in ${\rm Gr}(2n,4n+2)$, as desired. 
 
\begin{remark} One can do the same taking $h$ to be a collection of level $n_i$ structures $h_i$ at any given distinct points $z_i\in \mathbb P^1$, so that $\sum_i (n_i+1)=2n+1$, as noted above. The least degenerate case is $n_i=0$ 
for $i\in [1,2n+1]$. In this case $H_{2n}=G^{2n+1}$ which acts naturally 
on ${\rm Gr}(2n,(\mathbf k^2)^{2n+1})={\rm Gr}(2n,4n+2)$, and $\nu_h(s)$ is defined by taking the values of $s$ at $z_1,\ldots,z_{2n+1}$. In this case 
$Y_{2n}\subset {\rm Gr}(2n,4n+2)$ is an open subset of $\overline{G^{2n+1}\cdot V_0}$, where $V_0\subset (\mathbf k^2)^{2n+1}$ is the space of values of pairs of polynomials $(s_1(z),s_2(z))$ of degree $n-1$ at $z_1,\ldots,z_{2n+1}$. 

All the other cases are degenerations of this, when some of the points $z_i$ coalesce, 
with the original setting corresponding to all the points $z_i$ coalescing at $0$. 
\end{remark} 

\begin{remark} Similar methods can be used to locally represent $\Bun_G(X)$ as a quotient for general $X$ and $G$. 
\end{remark} 

\subsection{Principal \texorpdfstring{$G$}{G}-bundles on \texorpdfstring{$\bP^1$}{P1}}

We will now explain how to generalize Grothendieck's theorem from $\GL_n$ to an
arbitrary split connected reductive group $G$. For this, we first need to reformulate it. 

Recall that rank $n$ vector bundles are the same as $\GL_n$-bundles,
and $\GL_n$ is the group of invertible $n \times n$ matrices, so it
contains a maximal torus $T$ consisting of diagonal matrices with
non-zero entries on the diagonal: $T = \GL_1^n$. 
Grothendieck's theorem says that every $\GL_n$-bundle
on $\bP^1$ admits a $T$-structure, meaning that it is associated to some $T$-bundle $E$ by sending $E$ to $E\times_T \GL_n$. Recall that $T$-bundles on $\mathbb P^1$ correspond to $n$-tuples of integers $\mathbf m=(m_1,\ldots,m_n)$: the $T$-bundle $E(\mathbf m)$ corresponding to such a tuple is $\cO(m_1)^\times\times\cdots\times \cO(m_n)^\times$, where $\cO(m)^\times$ is the bundle of nonzero vectors in 
$\cO(m)$.  
This shows that the same $\GL_n$-bundle can
come from many different $T$-bundles, because the data of a $T$-bundle
is sensitive to the ordering of the integers $m_1, \ldots, m_n$.
Precisely, if $E(\mathbf m), E(\mathbf n)$ are $T$-bundles on $\bP^1$, then
by Grothendieck's theorem 
\[ E(\mathbf m) \times_T \GL_n \cong E(\mathbf n) \times_T \GL_n \]
if and only if there exists a permutation $w \in S_n$ such that
$\mathbf m = w\mathbf n$. 

Now let $G$ be a split connected reductive group, 
$T \subset G$ a maximal torus, $N(T) \subset G$ the
normalizer of $T$, and $W\coloneqq N(T)/T$ the Weyl group of $(G,T)$.
Then $T$-bundles on $\mathbb P^1$ 
are canonically parametrized by the cocharacter lattice 
$\mathbf X_*(T)\coloneqq\Hom(\mathbb G_m,T)$ (namely, the bundle $E(\mu)$ attached to the cocharacter 
$\mu$ has this cocharacter as its clutching map). Note that $W$ acts 
naturally on $T$, hence on $\mathbf X_*(T)$. 

\begin{theorem}[\cite{Gr}] \label{grgen}
  Any $G$-bundle on $\bP^1$ is associated to a $T$-bundle $E$, and
  given two $T$-bundles $E(\mu)$ and $E(\nu)$, we have 
  \[ E(\mu) \times_T G \cong E(\nu) \times_T G \]
  if and only if $\mu = w \nu$ for some element $w \in W$.
\end{theorem}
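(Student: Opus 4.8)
The plan is to reduce the whole statement to the already-established $\GL_n$ case (Corollary~\ref{grothen}) together with the Grothendieck--Birkhoff factorization of the loop group. First I would trivialize $E$ over the two standard affine charts $U_0=\Spec\mathbf k[z]$ and $U_\infty=\Spec\mathbf k[z^{-1}]$ of $\mathbb P^1$ --- possible by Theorem~\ref{Har} when $G$ is semisimple, and more generally since every $G$-bundle on $\mathbb A^1$ is trivial for $G$ connected reductive. The bundle is then described by a single clutching function $g\in G(\mathbf k[z,z^{-1}])$, i.e.\ a morphism $\mathbb G_m\to G$, and the modifications $g\mapsto h_0\,g\,h_\infty^{-1}$ with $h_0\in G(\mathbf k[z])$ and $h_\infty\in G(\mathbf k[z^{-1}])$ are exactly those preserving the isomorphism class of $E$. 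Under this dictionary, a $T$-structure $E(\mu)$ on $E$ is the same as bringing $g$ to the form $z\mapsto\mu(z)$ for some $\mu\in\mathbf X_*(T)$, and $E(\mu)\times_T G\cong E(\nu)\times_T G$ iff $\mu(z)$ and $\nu(z)$ lie in the same $G(\mathbf k[z])$--$G(\mathbf k[z^{-1}])$ double coset in $G(\mathbf k[z,z^{-1}])$.

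For the existence of a $T$-structure I would prove that $g$ lies in $G(\mathbf k[z])\cdot\mathbf X_*(T)\cdot G(\mathbf k[z^{-1}])$ as follows. By the extension of Problem~\ref{Bstr} to reductive groups (\cite{DS}), $E$ admits a reduction to a Borel $B\supseteq T$; choosing the canonical (Harder--Narasimhan) reduction one can arrange that the associated $T$-bundle $E(\mu)$, obtained by pushing the $B$-reduction along $B\twoheadrightarrow T$, has $\mu$ dominant. The unipotent radical $U$ of $B$ carries a filtration by $T$-stable normal subgroups whose successive quotients (after refinement) are copies of $\mathbb G_a$ on which $T$ acts through positive roots $\alpha$, so the obstructions to reducing the $B$-bundle further down to $E(\mu)$ lie in the groups $H^1(\mathbb P^1,\cO(\langle\alpha,\mu\rangle))$; dominance of $\mu$ gives $\langle\alpha,\mu\rangle\ge 0$, so these groups vanish and one obtains $E\cong E(\mu)\times_T G$.

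The ``only if'' part of uniqueness is easy: lifting $w$ to $\dot w\in N(T)(\mathbf k)\subseteq G(\mathbf k)$, conjugation by the constant $\dot w$ is a modification of the above type and sends the clutching function $\nu(z)$ of $E(\nu)\times_T G$ to $\dot w\,\nu(z)\,\dot w^{-1}=(w\nu)(z)$, so $E(w\nu)\times_T G\cong E(\nu)\times_T G$. For the ``if'' part, suppose $E(\mu)\times_T G\cong E(\nu)\times_T G$. Using the easy part we may replace $\mu$ and $\nu$ by the dominant representatives of their $W$-orbits, so it suffices to show that for dominant $\mu,\nu$ the isomorphism forces $\mu=\nu$. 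For each dominant weight $\lambda\in\mathbf X^*(T)$, let $V_\lambda$ be the Weyl module of highest weight $\lambda$; pushing out along $T\hookrightarrow G\to\GL(V_\lambda)$ gives
\[ E(\mu)\times_T\GL(V_\lambda)\cong\bigoplus_{\chi}\cO(\langle\chi,\mu\rangle),\qquad E(\nu)\times_T\GL(V_\lambda)\cong\bigoplus_{\chi}\cO(\langle\chi,\nu\rangle), \]
the sums running over the weights $\chi$ of $V_\lambda$ counted with multiplicity, and these two vector bundles are isomorphic. By the uniqueness in Corollary~\ref{grothen} the two multisets of exponents agree; taking maxima and using that $\max_\chi\langle\chi,\mu\rangle=\langle\lambda,\mu\rangle$ for dominant $\lambda,\mu$ (the maximum of the linear functional $\langle\,\cdot\,,\mu\rangle$ over the weight polytope of $V_\lambda$ is attained at its vertex $\lambda$), we get $\langle\lambda,\mu\rangle=\langle\lambda,\nu\rangle$ for all dominant $\lambda$; since dominant weights span $\mathbf X^*(T)\otimes\mathbb Q$, this forces $\mu=\nu$.

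I expect the existence statement to be the main obstacle: the reduction to a Borel is cited rather than proved, and the delicate point is arranging that the Borel reduction has dominant type, which is precisely what makes the unipotent obstructions land in vanishing cohomology --- without this one is left with a reduction to $B$ but not all the way to $T$ (already visible for $\GL_2$, where a ``wrongly ordered'' flag reduction of $\cO(0)\oplus\cO(m)$ does not refine to a $T$-reduction). The uniqueness-up-to-$W$ part, by contrast, is essentially formal once the $\GL_n$ case is in hand, plus a little root-system combinatorics.
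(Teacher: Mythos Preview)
The paper gives only a one-line proof sketch (``reduce to the case of vector bundles by considering representations of $G$'') and defers to \cite{Gr,MS}, so your proposal is far more detailed than what appears here. Your uniqueness argument is exactly an implementation of the paper's hint: push out along highest-weight representations $V_\lambda$, invoke Corollary~\ref{grothen} to match the multisets $\{\langle\chi,\mu\rangle\}$ and $\{\langle\chi,\nu\rangle\}$, and extract $\langle\lambda,\mu\rangle=\langle\lambda,\nu\rangle$ by taking maxima. This is correct and clean; the observation that for dominant $\mu$ the linear functional $\langle\cdot,\mu\rangle$ on the weight polytope attains its maximum at the highest weight $\lambda$ is exactly the right combinatorial input. (One small slip: you have the ``if'' and ``only if'' labels reversed --- conjugating by $\dot w$ proves the \emph{if} direction of the biconditional.)

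Your existence argument via Borel reduction and cohomology vanishing is a genuinely different route from ``consider representations,'' and is the standard modern approach. The structure is right: once you have a $B$-reduction whose associated cocharacter $\mu$ is dominant, the obstructions to splitting off the unipotent radical lie in $H^1(\bP^1,\cO(\langle\alpha,\mu\rangle))$ for $\alpha$ a positive root, and these vanish since $\langle\alpha,\mu\rangle\ge 0$. The gap you identify is real and is indeed the crux. Invoking ``the canonical Harder--Narasimhan reduction'' is not quite enough as stated, since HN in general produces only a parabolic reduction, and on the semistable pieces you still need to get down to $T$. One way to close the gap on $\bP^1$: start from \emph{any} $B$-reduction (Problem~\ref{Bstr}); if the associated $\mu$ fails dominance at some simple root $\alpha_i$, use the corresponding $\SL_2$ (or minimal parabolic $P_i$) to perform an elementary modification replacing $\mu$ by $s_i\mu$, which strictly increases $\langle\rho^\vee,\mu\rangle$; iterate until $\mu$ is dominant. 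This is the Atiyah--Bott style argument and makes your sketch into a proof without circularity.
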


\begin{proof}[Proof idea.]
  Reduce to the case of vector bundles by considering representations
  of $G$. For more detail, see \cite{Gr,MS}.
\end{proof}

Thus Theorem \ref{grgen} says that isomorphism classes of $G$-bundles on $\bP^1$ are labeled by $\mathbf X_*(T)/W$.

\begin{remark}
  The cocharacter lattice  $\mathbf X_*(T)$ is the {\it character} or {\it weight lattice}
  $\Lambda^\vee$ of the {\it Langlands dual group $G^\vee$}. Thus 
  Theorem \ref{grgen} says that isomorphism classes of 
  $G$-bundles on $\bP^1$ are labeled by
  \[ \Lambda^\vee/W \cong \Lambda_+^\vee, \]
  the set of dominant integral weights for $G^\vee$, which labels its
  irreducible representations. This is one of the simplest instances of {\it Langlands 
  duality}.
\end{remark}

\section{The double quotient realization of \texorpdfstring{$\Bun_G(X)$}{BunG(X)}}

\subsection{The double quotient construction} 
As before, let $X$ be a smooth irreducible projective curve and $G$ a split
connected reductive group over a field $\mathbf k$. In Section~\ref{sec:BunG}, we
attached to this pair $(X, G)$ the moduli stack $\Bun_G(X)$ of
principal $G$-bundles on $X$. In general, $\Bun_G(X)$ is a very
complicated object, but most of these complications will not be
relevant for us. We defined $\Bun_G(X)$ via its functor of points, but
now we would like to describe $\Bun_G(X)$ in a slightly more explicit
way.

For simplicity, let's assume first that $G$ is semisimple and $\mathbf k$ is
algebraically closed. By Theorem \ref{Har}, every $E
\in \Bun_G(X)$ trivializes once any chosen point is removed from $X$.
So pick a point $x \in X$. Cover $X$ by two charts: a disk around
$x$, and $X \setminus x$. In algebraic geometry, we do not have small
disks, but we can take a {\it formal} disk $D_x$ around $x$ instead.
To describe $G$-bundles on $X$ using these two charts, it suffices to study the transition function on the intersection
\begin{equation}\label{inter}
 (X \setminus x) \cap D_x = D_x^\times 
 \end{equation} 
 where $D_x^\times$ is the {\it punctured formal
  disk}. 
  
Let us explain the precise meaning of this equality. 
Let $\cO_{(x)}$ be the local ring of the point $x$, 
  $$
  \mathbf k[D_x]=\cO_x\coloneqq\underleftarrow{\lim}_{n\to \infty}\cO_{(x)}/\mathfrak{m}_x^n,
  $$
  where $\mathfrak{m}_x\subset \cO_{(x)}$ is the maximal ideal, and 
  $$
  \mathbf k[D_x^\times] \eqqcolon K_x
  $$ 
 be the field of fractions of $\cO_x$. 
More concretely, if $t$ is a formal coordinate on $X$ near $x$, then we obtain identifications\footnote{  In complex analysis, holomorphic functions on a disk are given by convergent Taylor series, and holomorphic functions on a punctured
  disk which are meromorphic at the puncture 
  are given by convergent Laurent series which are finite in the
  negative direction. We make these series formal by removing the
  convergence assumption, and then they make sense over any field.}
$$
 \cO_x \cong \mathbf k[[t]],\ K_x \cong \mathbf k((t)).
 $$
 Also let $R_x \coloneqq \cO(X \setminus x)$ be the
ring of regular functions on the affine curve $X \setminus x$.
Then we have an inclusion 
$R_x\hookrightarrow K_x$ defined by the Laurent series expansion at $x$, 
(it is easy to show that it does not depend on the choice of the formal coordinate), and algebraically equality \eqref{inter} just means that 
$$
R_x\cdot \cO_x=K_x. 
$$

Now, $G$-bundles $E$ on $X$ are defined by transition
maps $g(z)$ from $D_x$ to $X \setminus x$, or equivalently, elements
$g \in G(K_x)$, up to $g \mapsto h_1 \circ g \circ h_2^{-1}$ where $h_1 \in G(R_x)$
and $h_2 \in G(\cO_x)$. We have thus arrived at the following proposition.

\begin{proposition}\label{doubquot}
  $\Bun_G(X)(\mathbf k) = G(R_x) \backslash G(K_x) / G(\cO_x)$.
\end{proposition}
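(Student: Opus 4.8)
The plan is to unwind the two-chart description of a $G$-bundle into the language of the loop group, using the results already established in the text. First I would fix the point $x\in X$ and recall from Theorem~\ref{Har} that every principal $G$-bundle $E$ on $X$ (here $G$ semisimple, $\mathbf k$ algebraically closed) is trivial on $X\setminus x$, and from the general theory (e.g.\ Theorem~\ref{BSS} applied formal-locally, or simply that $\cO_x$ is a complete local ring) that $E$ is trivial on the formal disk $D_x=\Spec\cO_x$. Thus $E$ can be reconstructed from the two trivializations by gluing over the overlap \eqref{inter}, i.e.\ over $D_x^\times=\Spec K_x$; since $R_x\cdot\cO_x=K_x$ this genuinely covers $X$ by ``faithfully flat descent'' (or, on $\mathbf k=\bC$, by GAGA combined with the usual analytic gluing). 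The gluing datum is a single transition element, which is a $G$-valued regular function on $D_x^\times$, i.e.\ an element $g\in G(K_x)$. This produces a surjective map $G(K_x)\to \Bun_G(X)(\mathbf k)$, $g\mapsto E_g$.

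Next I would identify the fibers of this surjection. Two elements $g,g'\in G(K_x)$ give isomorphic bundles $E_g\cong E_{g'}$ precisely when the isomorphism can be realized by changing the two trivializations: a change of trivialization on $X\setminus x$ is an element $h_1\in G(R_x)$, and a change of trivialization on $D_x$ is an element $h_2\in G(\cO_x)$, and under these the transition element transforms as $g\mapsto h_1\, g\, h_2^{-1}$ (with $h_1$ on the $X\setminus x$ side and $h_2$ on the disk side, in the convention used just above the proposition). Conversely, any isomorphism $E_g\xrightarrow{\sim}E_{g'}$ restricts on each chart to an automorphism of the trivial bundle, hence to such a pair $(h_1,h_2)$, and compatibility over the overlap forces $g'=h_1\,g\,h_2^{-1}$. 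Therefore the fibers of $g\mapsto E_g$ are exactly the double cosets $G(R_x)\,g\,G(\cO_x)$, which gives the bijection
\[
\Bun_G(X)(\mathbf k)\;\xrightarrow{\ \sim\ }\;G(R_x)\backslash G(K_x)/G(\cO_x).
\]

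The main obstacle, and the step deserving the most care, is the reduction step: justifying that a $G$-bundle really is determined by a transition element on the \emph{formal} punctured disk together with its trivializations on $D_x$ and on $X\setminus x$, i.e.\ that gluing along \eqref{inter} is legitimate even though $D_x$ and $X\setminus x$ do not form a Zariski open cover of $X$ in the naive sense. This is precisely the content of faithfully flat descent (Beauville--Laszlo gluing): the ``open cover'' $\{X\setminus x,\ D_x\}$ with overlap $D_x^\times$ is a valid flat cover for the purposes of descending quasi-coherent sheaves and $G$-torsors, because $R_x\cdot\cO_x=K_x$. One should note that for $G=\GL_n$ this is classical (Beauville--Laszlo), and for general $G$ it follows by embedding $G\hookrightarrow\GL_N$ and descending the associated bundle together with its $G$-structure; alternatively, over $\bC$ one invokes GAGA (Theorem~\ref{gaga}) to pass to the analytic category, where $D_x$ becomes an honest small disk and the gluing is elementary, and then transports the result back. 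The remaining steps --- surjectivity and the computation of fibers --- are then straightforward bookkeeping with clutching functions of the type already carried out for $\bP^1$ in the preceding sections.
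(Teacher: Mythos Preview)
Your argument is correct and follows exactly the same approach as the paper: the proposition is stated as the conclusion of the discussion immediately preceding it, which trivializes $E$ on $X\setminus x$ via Theorem~\ref{Har}, on $D_x$ automatically, and then identifies the bundle with a clutching element $g\in G(K_x)$ modulo changes of trivialization $g\mapsto h_1\circ g\circ h_2^{-1}$ with $h_1\in G(R_x)$, $h_2\in G(\cO_x)$. You are in fact more careful than the paper about the one genuinely delicate point---that gluing along the formal punctured disk requires Beauville--Laszlo/faithfully flat descent rather than ordinary Zariski gluing---which the paper treats informally (alluding to faithfully flat descent only in an earlier footnote).
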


It is useful to take this double quotient in two steps. 
Namely, for any reductive $G$ consider the one-sided quotient
$\Gr_G \coloneqq G(\mathbf k((t))) / G(\mathbf k[[t]])$,
called the {\it affine Grassmannian}.\footnote{Recall that the usual Grassmannian is the quotient of 
$\GL_n$ by a maximal parabolic subgroup. Similarly, ${\rm Gr}_G$ 
is the quotient of the affine Kac-Moody group $G(\mathbf k((t)))$ 
by the maximal parabolic subgroup $G(\mathbf k[[t]])$. This 
explains the origin of the term 
``affine Grassmannian''.}  It is an {\it ind-variety} ---
infinite-dimensional, but a nested union of projective varieties of
increasing dimension. We then obtain that for $G$ semisimple, 
$G$-bundles on $X$
correspond to orbits of $G(R_x)$ on $\Gr_G$. 

\begin{example} If $G=\GL_n$ then geometric points of $\Gr_G$ are 
lattices $L\subset \mathbf k((t))^n$, i.e. finitely generated spanning 
$\mathbf k[[t]]$-submodules (which are necessarily free of rank $n$). Note that for any such $L$ there exists $N$ such that 
\begin{equation}\label{squeeze} 
t^{-N}\mathbf k[[t]]^n\supset L \supset t^N\mathbf k[[t]]^n.
\end{equation} 
Thus ${\rm Gr}_G$ is the nested union of the sets ${\rm Gr}_{G,N}$
of lattices satisfying  \eqref{squeeze}. Note that each 
${\rm Gr}_{G,N}$ can be realized as a closed subvariety 
of the Grassmannian ${\rm Gr}(\mathbf k[[t]]^n/t^{2N})={\rm Gr}(2nN)$ 
(disjoint union of Grassmannians of subspaces of all dimensions in $\mathbf k^{2nN}$) 
by sending $L$ to $t^NL/t^{2N}\mathbf k[[t]]^n$ (namely, it is the locus of all subspaces which are invariant under multiplication by $t$), i.e., it has a 
natural structure of a projective variety. Thus 
${\rm Gr}_G$ is a nested union of projective varieties. 
\end{example}  

We can generalize this construction by removing multiple points from
$X$ instead of just one. Namely, let $S \subset X$ be a non-empty
finite subset, and take the two charts $U_1 \coloneqq X \setminus S$
and $U_2 \coloneqq \bigsqcup_{x \in S} D_x$. Then
\[ U_1 \cap U_2 = \bigsqcup_{x \in S} D_x^\times, \]
and therefore, by the same reasoning as above,
\begin{equation} \label{eq:BunG-semisimple}
  \Bun_G(X)(\mathbf k) = G(R_S) \Big\backslash \prod_{s \in S} G(K_x) \Big/ \prod_{x \in S} G(\cO_x),
\end{equation}
where $R_S\coloneqq\mathbf k[X\setminus S]$.

This, however, does not quite work for non-semisimple reductive groups
$G$, e.g. $G=\GL_1 = \bG_m$, since in this case there is no finite set
$S$ such that all $G$-bundles are trivialized on $X \setminus S$ (as
explained at the end of Subsection~\ref{subsec:etale-vs-zariski-charts}).
So, to generalize the above construction to such groups, we will
remove {\it all} geometric points of $X$. This sounds like then there
will be nothing left, but in fact this is not the case; the
``Grothendieck generic point'' still remains! Indeed, removing a
single point in algebraic geometry means considering rational
functions which are allowed to have a pole at that point. So, removing
all points means considering rational functions which are allowed to
have poles anywhere, i.e. simply all rational functions. We thus
obtain the presentation
\begin{equation} \label{eq:BunG-general}
  \Bun_G(X)(\mathbf k) = G(\mathbf k(X)) \Big\backslash \sideset{}{'}\prod_{x \in X} G(K_x) \Big/ \prod_{x \in X} G(\cO_x)
\end{equation}
which is now valid for all reductive groups. 

The prime in the product is a technical but important detail. It denotes the {\it
  restricted product} consisting of elements with only finitely many
coordinates having poles, i.e. not lying in $G(\cO_x)$. The restricted
product arises because we are taking a colimit of
\eqref{eq:BunG-semisimple} over {\it finite} sets $S$ in order to
obtain \eqref{eq:BunG-general}.

Finally, if $\mathbf k$ is not algebraically closed, we can perform the same
construction using finite subsets $S \subset X(\overline{\mathbf k})$ which are
Galois-invariant, where $\overline{\mathbf k} \supset \mathbf k$ is the algebraic closure of $\mathbf k$.
Namely, let $\Gamma \coloneqq \Gal(\overline{\mathbf k}/{\mathbf k})$, then
\[ \Bun_G(X)(\mathbf k) = G(\mathbf k(X)) \Big\backslash \sideset{}{'}\prod_{x \in X(\overline{\mathbf k})/\Gamma} G(K_x) \Big/ \prod_{x \in X(\overline{\mathbf k})/\Gamma} G(\cO_x). \]
For example, if $\mathbf k$ is finite, then all the valuations $v$ of
$K=\mathbf k(X)$ come from points $x\in X(\overline{\mathbf k})/\Gamma$,
and the corresponding completions $K_v=K_x$ of $K$ with respect to $v$
are locally compact non-discrete topological fields (also called {\it
  local fields}). Such fields $K$ are called {\it global fields}. We
get
\[ \Bun_G(X)(\mathbf k) = G(K) \bigg\backslash G\bigg(\sideset{}{'}\prod_{v \in \Val(K)} K_v\bigg) \bigg/ G\bigg(\prod_{v \in \Val(K)} \cO_v\bigg), \]
where $\Val(K)$ is the set of valuations of $K$. This is called an
{\it arithmetic quotient}. 

\subsection{Analogy with number theory} 
Similar quotients arise in number theory. This is because while global
fields of characteristic $p>0$ all have the form $\mathbf k(X)$ for finite fields $\mathbf k$, in characteristic $0$ they are {\it number fields}, i.e. finite extensions of $\bQ$.

\begin{definition}
  If $K$ is a global field, the {\it ring of ad\`eles} of $K$ is
  \[ \bA \coloneqq \bA_K \coloneqq \sideset{}{'}\prod_{v \in \Val(K)} K_v. \]
\end{definition} 
  
 Note that while for $K=\mathbf k(X)$, all valuations are 
 non-archimedean (discrete), for a number field $K$, there are two kinds of valuations: archimedean (embed $K$ into $\bC$ and take the absolute value) and
  non-archimedean (discrete, or $p$-adic valuations). Rings of integers $\cO_v
  \subset K_v$ make sense only for non-archimedean valuations. Let
  \[ \cO_{\bA} \coloneqq \prod_{v \in \Val_{n.a.}(K)} \cO_v \]
  where $\Val_{n.a.}(K)$ is the set of non-archimedean valuations of $K$.
  Then we can consider the {\it arithmetic quotient} 
  \[ \cM \coloneqq G(K) \backslash G(\bA) / G(\cO_\bA). \]

This generalizes $\Bun_G(X)$, because if $K = \mathbf k(X)$, then $\mathcal M =
\Bun_G(X)(\mathbf k)$.

\begin{example}
  Let $K = \bQ$. Then there are $p$-adic (non-archimedean) valuations of $K$ with respect to all primes $p$, with completion $\mathbb Q_p$, and also the usual archimedean valuation $x\mapsto |x|$
  corresponding to $p=\infty$, with completion $\mathbb R$. Thus
  \begin{align*}
    \bA &= \bR \times \sideset{}{'}\prod_{p \text{ prime}} \bQ_p \\
    \cO_\bA &= \prod_{p \text{ prime}} \bZ_p,
  \end{align*}
  and
  \[ \cM = G(\bQ) \Big\backslash \bigg(G(\bR) \times \sideset{}{'}\prod_{p \text{ prime}} G(\bQ_p)\bigg) \Big/ \prod_{p \text{ prime}}G(\bZ_p) 
  \]
  Moreover, by the weak approximation theorem (\cite{PR}, p.402), 
  $$
  G(\mathbb Q)
  \cdot \prod_{p \text{ prime}}G(\bZ_p)=\sideset{}{'}\prod_{p \text{ prime}} G(\bQ_p),
  $$
  and 
  $$
  G(\mathbb Q)
  \cap \prod_{p \text{ prime}}G(\bZ_p)=G(\mathbb Z), 
  $$
  hence
  \[\cM= G(\bZ) \backslash G(\bR). \]
  For instance, if $G = \Sp_{2n}$, then $\cM = \Sp(2n, \bZ) \backslash
  \Sp(2n, \bR)$, and taking a quotient by $U(n)$ on the right gives
  the moduli space of $n$-dimensional abelian varieties
  \[ \cA_n = \Sp(2n, \bZ) \backslash \Sp(2n, \bR) / U(n). \]
  In particular, if $G = \SL_2 = \Sp_2$, then
  \[ \cA_1 = \SL_2(\bZ) \backslash \SL_2(\bR) / U(1) \]
  is the moduli space of elliptic curves, which is the home of classical modular forms. 
 \end{example}
 
\begin{example}  
  Let $G = \GL_1$ and $\mathbf k$ be
  any field. Then we obtain
  \[ \Pic(X)(\mathbf k) = \GL_1(\mathbf k(X))\backslash \GL_1(\bA) / \GL_1(\cO_{\bA})=\mathbf k(X)^\times \backslash \bA^\times / \cO_{\bA}^\times. \]
  The quotient of $\bA^\times/\cO_{\bA}^\times$ by $\mathbf k^\times$ is
  the group of divisors ${\rm Div}(X)$ on $X$, and $\mathbf k(X)^\times/\mathbf k^\times$ 
  is its subgroup of principal divisors, ${\rm PDiv}(X)$. (Here, $\mathbf k^\times$ is the
  intersection $\mathbf k(X)^\times \cap \cO_{\bA}^\times$.) Thus we recover the 
  classical definition of the Picard group of $X$: 
  $$
  \Pic(X)(\mathbf k)={\rm Div}(X)/{\rm PDiv}(X).
 $$ 
\end{example}

\section{Stable bundles and Higgs fields}

\subsection{Stable bundles}\label{stabu}
Hitchin systems are integrable Hamiltonian systems, and Hamiltonian systems live on
symplectic manifolds (or varieties), so we need to come up with one. A natural way to create a symplectic variety in our setting is to consider the cotangent bundle $T^*\Bun_G(X)$. But, as explained in Section 3, $\Bun_G(X)$ is not a variety, or even a scheme. Rather, it is a stack, and the cotangent bundle of a stack is well-defined only as another stack. We will avoid these difficulties, however, and work only with a
particular open set in $T^*\Bun_G(X)$ --- the cotangent bundle 
$T^*\Bun_G^\circ(X)$ of the subset $\Bun_G^\circ(X)$ of suitably defined ``generic" bundles, a smooth open subvariety of $\Bun_G(X)$ which is nonempty and dense if the genus $g$ of $X$ is $\ge 2$ (see Remark \ref{genus>2}).

Assume $g \ge 2$ and first suppose that $G$ is simple and adjoint. Then a ``generic'' 
bundle in $\Bun_G(X)$ has trivial automorphism group, so in the local
presentation of $\Bun_G(X)$ as a quotient of an algebraic variety by a
group, the group acts freely at such a bundle, and, moreover, the orbit of a ``generic'' bundle is closed. The locus of
``generic'' bundles therefore forms a smooth algebraic variety
$\Bun_G^\circ(X)$.

There are many ways to specify what ``generic'' means. We will use a
{\it stability condition}. 

For instance, let $G = \PGL_n$. Recall that $G$-bundles are rank $n$
vector bundles modulo tensoring with line bundles. Recall also that if $E$ is a vector bundle on $X$, then there are two integers attached to it: the degree $d(E)$ (given by the first Chern
class), and the rank $r(E)$.

\begin{definition} If $E\ne 0$ then 
  the {\it slope} of $E$ is
  \[ \mu(E) \coloneqq \frac{d(E)}{r(E)}. \]
  We say $E$ is {\it stable} if for every subbundle $0 \neq E'
  \subsetneq E$,
  \[ \mu(E') < \mu(E). \]
\end{definition}

 There is a more technical definition, due to Ramanathan, 
 for other reductive groups $G$,
  which we will not state. We refer the reader to \cite{R} for details. 

It is easy to see that if $L$ is a line bundle and $E$ a vector bundle, then $E$ is
  stable if and only if $E \otimes L$ is stable. This implies that stability is well-defined for $\PGL_n$-bundles. 
  
\begin{theorem}[\cite{R}]
  Stable bundles have the trivial group of automorphisms, and form a
  smooth variety which is an open subset $\Bun_G^\circ(X) \subset
  \Bun_G(X)$.
\end{theorem}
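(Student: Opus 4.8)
The plan is to prove the three assertions in turn for $G = \PGL_n$ (equivalently, for stable rank $n$ vector bundles modulo twisting), reducing the general reductive case to Ramanathan's work \cite{R}. First I would show that a stable bundle $E$ has no nontrivial automorphisms. Any endomorphism $\varphi \in \End(E)$ either is zero or has nonzero image; if $\varphi$ is not an isomorphism, consider the subbundle $E' \subset E$ generated by $\ker\varphi$ (saturate the kernel) and the image subbundle. A standard slope estimate — using that $\mu$ is additive on short exact sequences in the sense that $\mu(E)$ lies between $\mu(E')$ and $\mu(E/E')$ — forces $\mu(\ker\varphi) \ge \mu(E)$ and $\mu(\im\varphi) \le \mu(E)$, contradicting stability unless $\varphi$ is $0$ or invertible. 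Then, since $\End(E)$ is a finite-dimensional $\mathbf k$-algebra that is a division algebra over the algebraically closed field $\mathbf k$, it equals $\mathbf k$, so $\Aut(E) = \GL_1$ acting by scalars, which is exactly trivial at the level of $\PGL_n$-bundles.

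Next I would address smoothness, i.e. that the deformation theory of a stable $\PGL_n$-bundle is unobstructed. The tangent space to $\Bun_{\PGL_n}(X)$ at $E$ is $H^1(X, \mathrm{ad}(E))$ and the obstruction space is $H^2(X, \mathrm{ad}(E))$, where $\mathrm{ad}(E)$ is the (traceless) adjoint bundle; since $X$ is a curve, $H^2$ vanishes for dimension reasons, so $\Bun_{\PGL_n}(X)$ is smooth as a stack everywhere. The point of stability is that at such $E$ the automorphism group is trivial (by the first part), so in a local presentation $[Y_n/H_n]$ the group $H_n$ acts with trivial stabilizer at points over $E$; combined with smoothness of the stack, the quotient near $E$ is a smooth algebraic \emph{variety} rather than a genuine stack. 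One then defines $\Bun_G^\circ(X)$ as the locus of stable bundles and computes its dimension as $\dim H^1(X,\mathrm{ad}(E)) = (n^2-1)(g-1)$ by Riemann--Roch, using $H^0(X,\mathrm{ad}(E)) = 0$ (the same vanishing argument as in part one, applied to the traceless endomorphisms).

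Then I would show the stable locus is Zariski open. This is the standard semicontinuity argument: in a flat family of bundles, the function sending a point to the maximal slope of a subsheaf (the first slope of the Harder--Narasimhan filtration) is upper semicontinuous, so the locus where it is strictly less than $\mu(E)$ — which, by a boundedness argument à la Grothendieck, need only be tested against subsheaves of bounded type — is open. For the $\PGL_n$ version one checks this condition is invariant under twisting by line bundles, as already observed in the excerpt, so it descends to an open condition on $\Bun_{\PGL_n}(X)$. Finally, invoking Remark~\ref{genus>2}, $\Bun_G^\circ(X)$ is nonempty (hence dense, since $\Bun_G(X)$ is irreducible for $G$ semisimple) when $g \ge 2$.

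The main obstacle I expect is not any single step but rather the passage from the statement about vector bundles to the statement about $G$-bundles for a general reductive $G$ via Ramanathan's stability notion: the correct definition involves testing against reductions to parabolic subgroups and a numerical condition on the associated line bundles, and verifying openness and the triviality-of-automorphisms claim in that generality requires genuinely more than the rank-$n$ case. Since the excerpt explicitly defers this to \cite{R}, I would treat the $\PGL_n$ (and, with minor modifications, $\GL_n$ and $\SL_n$) case carefully and cite Ramanathan for the rest; the subtle technical point within the vector bundle case itself is the boundedness needed to make the semicontinuity argument for openness go through, which relies on the fact that subsheaves destabilizing a bundle of fixed rank and degree form a bounded family.
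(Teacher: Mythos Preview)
The paper does not prove this theorem; it is stated with attribution to Ramanathan \cite{R} and immediately followed by a remark, with no argument given. So there is nothing to compare against on the paper's side.

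Your sketch is a sound outline of the standard proof for $G=\PGL_n$. The triviality-of-automorphisms argument via ``nonzero endomorphism of a stable bundle is an isomorphism'' plus finite-dimensionality of $\End(E)$ over an algebraically closed field is correct; just be careful that it is the \emph{kernel} that is automatically saturated (since $E/\ker\varphi$ injects into the torsion-free $E$), while the image need not be, so the contradiction comes from comparing $\mu(\im\varphi)$, or rather its saturation, to $\mu(E)$. Smoothness from $H^2=0$ on a curve is right, as is the passage from smooth stack to smooth variety once stabilizers are trivial. Openness via upper semicontinuity of the maximal Harder--Narasimhan slope is the standard route; you correctly flag boundedness as the substantive input. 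Your decision to treat the general reductive case by citation to \cite{R} matches exactly what the paper does.
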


\begin{example}
  The converse is not true, i.e. unstable bundles may also have
  trivial automorphism groups. Take a line bundle $L$ on $X$ of degree
  $1$, and consider the group $\Ext^1(\cO_X, L)$ which parameterizes
  extensions $0 \to L \to E \to \cO_X \to 0$. If $X$ has genus $\ge
  3$, then this group is non-zero by Riemann--Roch. Any such extension
  $E$ is unstable since $\mu(L) = 1 > 1/2 = \mu(E)$. However, for any
  such non-trivial extension, the automorphism group of $E$ as a
  $\PGL_2$-bundle is isomorphic to $H^0(L)$, which is trivial for
  generic $L$ of degree $1$.
\end{example}

\begin{remark}\label{hms} Let $\cM_G^\circ(X) \coloneqq T^*\Bun_G^\circ(X)$. The Hitchin system
will initially live on $\cM_G^\circ(X)$, but actually there is a
partial compactification $\cM_G(X)$ of $\cM_G^\circ(X)$ called the
{\it Hitchin moduli space}, which is still symplectic, to which the
Hitchin system naturally extends (see \cite{Hl}). For the simplest example of this, 
see Subsection \ref{lastsec}. 
This sort of extension to a partial
compactification is a common phenomenon in integrable systems.
\end{remark}

For general semisimple $G$, not necessarily of adjoint type, there is
a straightforward extension of this story. Namely, a bundle is ``generic'' if
it is {\it regularly stable}, meaning that it is stable and its group
of automorphisms reduces to the center $Z(G)$ (which is the smallest
it can be). The resulting $\Bun_G^\circ(X)$ is still a stack with
stabilizer $Z(G)$ at every point, but because this stabilizer is the
same everywhere, we can rigidify like we did for ${\rm Pic}(X)$ and obtain a variety. In
other words, we may ignore the stackiness and just consider the
underlying variety. \footnote{Stability is still important here: in general, the locus consisting of bundles whose group of automorphisms reduces to $Z(G)$ is not a scheme, since the orbit of such a bundle may fail to be closed.}

\subsection{Higgs fields} 
Before we go further, we should compute $\dim \Bun_G^\circ(X)$, or,
equivalently since it is a smooth variety, the dimension $\dim
T_E\Bun_G^\circ(X)$ of the tangent space at a point $E \in
\Bun_G^\circ(X)$. This tangent space is just the deformation space of
the bundle $E$, classifying its first-order deformations.

Let $\lie{g}$ be the Lie algebra of $G$. For a $G$-bundle $E$ on $X$, let $\ad E$ be the {\it adjoint bundle} of $E$, i.e., the vector bundle $E_\lie{g}$ associated to $E$ via the adjoint representation $\lie{g}$ of $G$.

\begin{problem}\label{adE}  Show, using \v Cech
  $1$-cocycles, that the tangent space to $\Bun_G^\circ(X)$ at $E$ (i.e., the deformation space of $E$) is
  $H^1(X, \ad E)$.
\end{problem}

\begin{example}
  Let $G = \GL_n$. First order deformations of a vector bundle $E$ are classified
  by $\Ext^1(E, E)$; this is unsurprising because, affine-locally, we
  are just deforming modules over the coordinate rings $\mathbf k[U]$, $U\subset X$. Since
  \[ \Ext^1(E, E) = \Ext^1(\cO, E^* \otimes E) = H^1(X, \ad E), \]
  this agrees with our claim that deformations of $E$ are classified by
  $H^1(X, \ad E)$.
\end{example}

The invariant pairing on
$\lie{g}$ may be used to identify $(\ad E)^* \cong \ad E$. Using this identification 
and Serre duality, it follows that
\begin{align*}
  T_E^*\Bun_G^\circ(X) = H^1(X, \ad E)^*
  &\cong H^0(X, \cK_X \otimes (\ad E)^*) \\
  &= H^0(X, \cK_X \otimes \ad E).
\end{align*}
Elements of $H^0(X, \cK_X \otimes \ad E)$ have a very concrete geometric
interpretation: they are $1$-forms on $X$ with coefficients in $\ad
E$. From physics, they have the name {\it Higgs fields} on $E$.
The space of Higgs fields on $E$ is usually denoted by $\Omega^1(X,\ad E)$. 
A pair $(E,\phi)$ where $\phi\in \Omega^1(X,\ad E)$ is called a {\it Higgs pair}.

\subsection{The dimension of the variety of stable bundles}
It remains to compute $\dim H^0(X, \cK_X \otimes \ad E)$. To this end, we will use the following lemma. 

\begin{lemma}\label{euchar} 
Let $V$ be a vector bundle on $X$ of degree $d$ and rank $r$. 
Then the Euler characteristic $\chi(X,V)$ equals 
$d-(g-1)r$. 
\end{lemma}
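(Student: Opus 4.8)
The plan is to prove the formula first for line bundles and then bootstrap to arbitrary vector bundles, using additivity of the Euler characteristic on short exact sequences of coherent sheaves together with a filtration of $V$ by subbundles with line-bundle quotients.

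First I would record the basic inputs. Since $X$ is a smooth projective curve, every coherent sheaf $\cF$ on $X$ has $H^i(X,\cF)=0$ for $i\ge 2$, so the long exact cohomology sequence attached to a short exact sequence $0\to \cF'\to \cF\to \cF''\to 0$ of coherent sheaves terminates, yielding additivity $\chi(X,\cF)=\chi(X,\cF')+\chi(X,\cF'')$. Next, for a closed point $x\in X$ the skyscraper sheaf $\cO_x$ has $\chi(X,\cO_x)=h^0=1$. Finally, $\chi(X,\cO_X)=1-g$: indeed $H^0(X,\cO_X)=\mathbf k$ since $X$ is projective and connected, while $H^1(X,\cO_X)$ has dimension $g$ by definition of the genus (equivalently by Serre duality $H^1(X,\cO_X)^*\cong H^0(X,\cK_X)$, which has dimension $g$).

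Next I would treat the case $r=1$. Writing a line bundle $L$ as $\cO(D)$ for a divisor $D=\sum_i n_i x_i$, I induct on $\sum_i|n_i|$. For the inductive step, adding or removing a point $x$ gives a short exact sequence $0\to \cO(D)\to \cO(D+x)\to \cO_x\to 0$ (or the analogous one with $\cO(D-x)$), so by additivity and $\chi(\cO_x)=1$ the quantity $\chi(\cO(D))-\deg D$ is unchanged; the base case $D=0$ gives $\chi(\cO_X)=1-g$. Hence $\chi(X,L)=\deg L+1-g=\deg L-(g-1)$. Then for general $V$ of rank $r$ and degree $d$ I would use the existence of a complete flag of subbundles $0=V_0\subset V_1\subset\cdots\subset V_r=V$ with each $V_i/V_{i-1}$ a line bundle $L_i$ --- this is precisely a $B$-structure on $V$ in the sense of Problem~\ref{Bstr} (alternatively, build the flag by induction on rank, saturating the image of a nonzero section of a sufficiently positive twist of $V$ and using that on a smooth curve the quotient of a vector bundle by a subbundle is again a vector bundle). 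Applying additivity to the sequences $0\to V_{i-1}\to V_i\to L_i\to 0$ gives $\chi(X,V)=\sum_{i=1}^r\chi(X,L_i)=\sum_{i=1}^r(\deg L_i+1-g)=\left(\sum_i\deg L_i\right)+r(1-g)=d-(g-1)r$, since $\deg V=\sum_i\deg L_i$.

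The step needing the most care is the construction of the flag of subbundles with locally free quotients (or the appeal to Problem~\ref{Bstr}); once that is available, the rest is a formal consequence of additivity of $\chi$ and the line-bundle case, which in turn rests only on the single input $\chi(\cO_X)=1-g$.
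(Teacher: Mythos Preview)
Your proof is correct and follows essentially the same approach as the paper: reduce to line bundles via a $B$-structure (Problem~\ref{Bstr}) and then sum using additivity of $\chi$. The only difference is that you derive the line-bundle case from scratch via the standard divisor induction, whereas the paper simply cites the Riemann--Roch theorem.
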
 

\begin{proof} By the Riemann-Roch theorem, if $L$ is a line bundle on $X$ 
then 
$$
\chi(X,L)=\deg L-(g-1).
$$ 
By Problem \ref{Bstr}, 
$V$ has a $B$-structure, i.e., a filtration $0=V_0\subset V_1\subset...\subset V_r=V$ 
such that $V_{i}/V_{i-1}$ are line bundles. Thus 
$$
\chi(X,V)=\sum_{i=1}^r\chi(X,V_i/V_{i-1})=\sum_{i=1}^r(\deg(V_i/V_{i-1})-(g-1))=d-(g-1)r.
$$
\end{proof} 

Now note that since the vector bundle ${\rm ad} E$ is self-dual, it has degree $0$. Thus 
$\cK_X\otimes {\rm ad} E$ has degree $2(g-1)\dim G$. So it follows from Lemma \ref{euchar} 
that 
$$
\chi(X,\cK_X\otimes {\rm ad} E)=(g-1)\dim G.
$$ 

On the other hand, by Serre duality $\dim H^1(X, \cK_X \otimes \ad E) = \dim H^0(X, \ad E)$, and $H^0(X, \ad E)=0$  
because stable bundles have no (infinitesimal)
automorphisms.\footnote{This is where we use that $G$ is semisimple,
because otherwise generic $G$-bundles still have infinitesimal automorphisms, and so
$\dim H^1$ will not vanish in the Euler characteristic.} Thus 
$$
\dim H^0(X,\cK_X\otimes {\rm ad}E)=\dim H^0(X,\cK_X\otimes {\rm ad}E)-\dim H^1(X, \cK_X \otimes \ad E)
$$
$$
=\chi(X,\cK_X\otimes {\rm ad}E)=(g-1)\dim G.
$$
Hence we get 

\begin{proposition} If $G$ is semisimple then $\dim {\rm Bun}^\circ_G(X)=(g-1)\dim G$.
\end{proposition}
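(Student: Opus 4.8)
The proof is essentially a bookkeeping exercise that has already been set up in the preceding paragraphs, so the plan is simply to assemble the pieces. First I would recall that $\mathrm{Bun}^\circ_G(X)$ is a smooth variety (by the theorem of Ramanathan quoted above), so its dimension equals the dimension of the tangent space at any point $E$, and by Problem~\ref{adE} this tangent space is $H^1(X,\ad E)$. Hence it suffices to compute $\dim H^1(X,\ad E)$ for a stable bundle $E$.

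Next I would invoke the two ingredients already established: (a) since $G$ is semisimple, $E$ stable has no infinitesimal automorphisms, i.e.\ $H^0(X,\ad E)=0$; and (b) the Euler characteristic computation. For (b), note that $\ad E$ is self-dual via the Killing form, hence has degree $0$, so Lemma~\ref{euchar} gives $\chi(X,\ad E) = 0 - (g-1)\dim G = -(g-1)\dim G$. Combining, $\dim H^1(X,\ad E) = \dim H^0(X,\ad E) - \chi(X,\ad E) = 0 + (g-1)\dim G$. Therefore $\dim\mathrm{Bun}^\circ_G(X) = (g-1)\dim G$, as claimed. (Equivalently, one could run the dual computation with $\cK_X\otimes\ad E$ exactly as in the paragraph preceding the proposition: $\dim H^0(X,\cK_X\otimes\ad E) = \chi(X,\cK_X\otimes\ad E) = (g-1)\dim G$ since $H^1(X,\cK_X\otimes\ad E)\cong H^0(X,\ad E)^*=0$ by Serre duality, and this $H^0$ is the cotangent space, which has the same dimension as the tangent space.)

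There is no real obstacle here, since every nontrivial input has been proved or cited earlier; the only thing to be careful about is the logical dependence. The vanishing $H^0(X,\ad E)=0$ genuinely uses semisimplicity of $G$ (the footnote flags this: for reductive but not semisimple $G$ the generic bundle still carries the abelian part of $\lie g$ as infinitesimal automorphisms, so $H^1$ would be larger by $\dim Z(\lie g)\cdot g$ and the clean formula fails). One should also note that the argument is independent of the choice of stable $E$, which is automatic because $\mathrm{Bun}^\circ_G(X)$ is smooth and irreducible (being a dense open in the irreducible stack $\mathrm{Bun}_G(X)$), so $\dim H^1(X,\ad E)$ is constant on it — though in fact the computation above already shows the answer is the same for every stable $E$, so irreducibility is not even needed.
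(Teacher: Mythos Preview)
Your proof is correct and follows essentially the same approach as the paper: both use Lemma~\ref{euchar}, self-duality of $\ad E$, and the vanishing $H^0(X,\ad E)=0$ for stable $E$ with $G$ semisimple. The only cosmetic difference is that the paper computes the Euler characteristic of $\cK_X\otimes\ad E$ (the cotangent side) rather than of $\ad E$ itself, which you already note as an equivalent alternative in your parenthetical remark.
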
 

For $G = \bG_m$, we know that $\Bun_G(X) = \Pic(X)$ has dimension $g$. So
for general reductive $G$, we have 
\begin{proposition}
\[ \dim \Bun_G^\circ(X) = (g - 1) \dim G + \dim Z(G). \]
\end{proposition}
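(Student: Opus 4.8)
The plan is to run exactly the Riemann--Roch computation used for the semisimple case, now retaining the contribution of $H^0(X,\ad E)$, which no longer vanishes. By Problem~\ref{adE} --- whose deformation-theoretic proof does not use semisimplicity of $G$ --- the tangent space at $E\in\Bun_G^\circ(X)$ is $H^1(X,\ad E)$, and by Serre duality together with the self-duality of $\ad E$ we have, as in the semisimple case,
\[
\dim\Bun_G^\circ(X)=\dim H^1(X,\ad E)=\dim H^0(X,\cK_X\otimes\ad E).
\]
So the task is again to compute $\dim H^0(X,\cK_X\otimes\ad E)$.

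First I would check that $\ad E$ still has degree $0$: a reductive Lie algebra $\lie g=\lie z(\lie g)\oplus[\lie g,\lie g]$ carries a non-degenerate $G$-invariant symmetric bilinear form (any non-degenerate form on the abelian part $\lie z(\lie g)$, the Killing form on $[\lie g,\lie g]$, with the two summands declared orthogonal), so $(\ad E)^*\cong\ad E$ and hence $\deg\ad E=0$. Exactly as in the semisimple proof, Lemma~\ref{euchar} then gives $\chi(X,\cK_X\otimes\ad E)=(2g-2)\dim G-(g-1)\dim G=(g-1)\dim G$, while Serre duality gives $\dim H^1(X,\cK_X\otimes\ad E)=\dim H^0(X,\ad E)$.

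The one place where the argument diverges from the semisimple case is the value of $\dim H^0(X,\ad E)$. Now $H^0(X,\ad E)$ is the Lie algebra of $\Aut(E)$, and by definition a point of $\Bun_G^\circ(X)$ is a regularly stable bundle, i.e.\ one whose automorphism group is as small as possible, equal to $Z(G)$; hence $\dim H^0(X,\ad E)=\dim Z(G)$. Putting the pieces together,
\[
\dim\Bun_G^\circ(X)=\dim H^0(X,\cK_X\otimes\ad E)=\chi(X,\cK_X\otimes\ad E)+\dim H^1(X,\cK_X\otimes\ad E)=(g-1)\dim G+\dim Z(G).
\]

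The main (and really the only) point requiring care is the identification $H^0(X,\ad E)\cong\lie z(\lie g)$ for a regularly stable $E$ --- i.e.\ that no extra infinitesimal automorphisms appear --- together with the harmless observation that the rigidification used to turn the stack $\Bun_G^\circ(X)$ into an honest variety leaves the dimension unchanged. As a sanity check one can argue independently via the central isogeny $Z(G)^\circ\times[G,G]\to G$: up to finite covers $\Bun_G^\circ(X)$ is the product of $\Bun_{Z(G)^\circ}(X)$, of dimension $g\dim Z(G)$ since $\Bun_{\bG_m}(X)=\Pic(X)$ has dimension $g$ and $Z(G)^\circ$ is a torus of rank $\dim Z(G)$, with $\Bun^\circ_{[G,G]}(X)$, of dimension $(g-1)\dim[G,G]$ by the semisimple proposition above; the sum $g\dim Z(G)+(g-1)(\dim G-\dim Z(G))$ equals $(g-1)\dim G+\dim Z(G)$, as claimed.
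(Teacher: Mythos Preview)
Your proof is correct. The paper, however, does not repeat the Riemann--Roch computation in the reductive case; it simply observes that $\Bun_{\bG_m}(X)=\Pic(X)$ has dimension $g$ and then states the formula for general reductive $G$, implicitly combining the semisimple result with the torus case via the central isogeny $Z(G)^\circ\times[G,G]\to G$ --- precisely the argument you give as a sanity check at the end. So your ``main'' argument and the paper's argument are swapped: what you treat as the primary proof (direct Riemann--Roch with $\dim H^0(X,\ad E)=\dim Z(G)$) is more detailed than anything the paper writes down, while your sanity check is the paper's actual (one-line) reasoning. Your direct computation has the advantage of making explicit where the $\dim Z(G)$ term comes from cohomologically, namely from the nonvanishing of $H^0(X,\ad E)\cong\lie z(\lie g)$, whereas the isogeny argument is quicker but leaves this hidden.
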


\begin{example}
  For $G = \GL_n$, we have 
  $$
  \dim \Bun_G^\circ(X)=(g - 1)n^2 + 1.
  $$
\end{example}

\section{The classical Hitchin integrable system}

\subsection{The classical Hitchin system for \texorpdfstring{$\GL_n$}{GLn}, \texorpdfstring{$\SL_n$}{SLn} and \texorpdfstring{$\PGL_n$}{PGLn}}
We are now ready to introduce the main character of our story --- the Hitchin integrable system. We will take $\mathbf k=\mathbb C$. As a warm-up, we
begin with $G = \SL_n$ or $\PGL_n$. In this case, as we've just shown, 
\[ \dim \Bun_G^\circ(X) = (n^2 - 1)(g - 1), \]
and elements in $\Bun_G^\circ(X)$ are pairs $(E, \phi)$ where $E$ is a
stable bundle and $\phi \in \Omega^1(X, \End E)$ is
a Higgs field with trace zero.

\begin{definition}[Hitchin system for $G = \SL_n$]
The {\it Hitchin base} is the vector space 
\[\cB=\cB_{X,G}\coloneqq\bigoplus_{i=1}^{n-1} H^0(X, \cK_X^{\otimes (i+1)}).\]

  The {\it Hitchin map} is
  $$
    p\colon T^*\Bun_G^\circ(X) \to \cB, \quad
   p (E, \phi)\coloneqq(\tr \wedge^2 \phi, -\tr \wedge^3 \phi, \ldots, (-1)^n \tr \wedge^n \phi).
 $$
 \end{definition}
 
\begin{remark}
  Note that the fibers of $\ad E$ are isomorphic to $\lie{sl}_n$ 
    non-canonically; namely, the isomorphism is well defined only up to inner
  automorphisms. However, since the functions $\tr\wedge^i A$ of
  a matrix $A$ are conjugation-invariant, $\tr\wedge ^i \phi$ is
  still well defined for any Higgs field $\phi$.
\end{remark}
 
  By the Riemann--Roch theorem, $\dim
  H^0(X, \cK_X^{\otimes (i+1)}) = (2i+1)(g-1)$, and so the dimension of
  the Hitchin base is
  \[ \dim \cB=\sum_{i=1}^{n-1} (2i+1)(g-1) = (n^2 - 1)(g - 1) = \dim \Bun_G^\circ(X). \]

\begin{theorem}[Hitchin, \cite{H}]\label{hisl_n}
The map $p$ is generically a Lagrangian fibration\footnote{A Lagrangian fibration is a fibration with Lagrangian fibers.}, i.e., defines an integrable system.
\end{theorem}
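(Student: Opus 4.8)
The plan is to prove the three defining features of an integrable system separately: (a) the components of $p$ Poisson-commute; (b) the generic fiber has dimension equal to $\dim \cB = \tfrac12 \dim \cM_G^\circ(X)$, i.e. $p$ is (generically) a Lagrangian fibration; and (c) $p$ is generically submersive so the fibers are actually Lagrangian submanifolds. The dimension count in (b) is already done in the excerpt, so the substantive content is the involutivity in (a) and the surjectivity of $dp$ in (c).

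For Poisson-commutativity I would use the standard ``Hitchin trick'': realize the whole construction via the $\bC^\times$-action on $\cM_G^\circ(X) = T^*\Bun_G^\circ(X)$ that scales the Higgs field $\phi \mapsto t\phi$. First I would observe that a Higgs pair $(E,\phi)$ has a \emph{spectral curve} $X_{(E,\phi)} \subset \operatorname{Tot}(\cK_X)$ cut out by the characteristic polynomial $\det(\lambda - \phi) = \lambda^n + \sum_{i\geq 2} s_i(\phi)\lambda^{n-i}$, where $\lambda$ is the tautological section of $\pi^*\cK_X$; the coefficients $s_i$ are exactly (up to sign) the components of $p$, so the Hitchin map records the spectral curve. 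Then I would invoke the abstract fact that on $T^*M$ for any smooth $M$, functions pulled back from $M$ Poisson-commute, and more generally use the symplectic-reduction description of $\cM_G^\circ(X)$ together with the fact that the Hitchin hamiltonians arise as invariant polynomials evaluated pointwise on $\phi$: concretely, pair $\tr\wedge^i\phi$ against a test $1$-form to get honest functions, compute their Hamiltonian vector fields, and check the bracket vanishes because the flows generated by $\tr\wedge^i\phi$ act on $(E,\phi)$ by a Hecke-type modification that preserves all other $\tr\wedge^j\phi$ (this is the content of Hitchin's original computation). Alternatively — and this is cleaner — I would cite the general principle that a map to an affine space whose fibers (generically) have the right dimension and are orbits of a Hamiltonian group action is automatically a Lagrangian fibration, and verify the fiber is (an open subset of) the Jacobian of the spectral curve.

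The heart of the proof, and the main obstacle, is showing that over a generic point $b \in \cB$ the fiber $p^{-1}(b)$ is nonempty of the expected dimension and that $dp$ is surjective there. Here I would introduce the spectral curve $\widetilde X_b \subset \operatorname{Tot}(\cK_X)$ defined by $b$ — for generic $b$ this is a smooth projective curve, finite of degree $n$ over $X$ via the projection $\pi$, and by adjunction / Riemann–Hurwitz its genus is $\widetilde g = n^2(g-1)+1$. The key identification is the \emph{BNR correspondence} (Beauville–Narasimhan–Ramanan): Higgs pairs $(E,\phi)$ on $X$ with characteristic polynomial $b$ correspond to (torsion-free rank-one) sheaves on $\widetilde X_b$, via $E = \pi_* L$ with $\phi$ induced by multiplication by $\lambda$. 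Thus $p^{-1}(b)$ is birational to $\operatorname{Pic}^d(\widetilde X_b)$ (for the appropriate degree $d$), which has dimension $\widetilde g = n^2(g-1)+1$; subtracting the $g$-dimensional choice of the line bundle $\det$ for $\SL_n$ (or adding the appropriate correction) gives fiber dimension $(n^2-1)(g-1)$, matching $\dim\cB$. That the generic fiber is smooth of this dimension and that the total space is smooth there gives, by the dimension count $\dim\cM_G^\circ(X) = 2\dim\cB$ combined with involutivity, that the fibers are Lagrangian and $dp$ is generically surjective.

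Putting it together: involutivity from (a) forces $p^{-1}(b)$ to be coisotropic wherever $dp$ is surjective, hence isotropic once we know $\dim p^{-1}(b) = \dim\cB = \tfrac12\dim\cM_G^\circ(X)$; and that dimension equality is supplied by the BNR/spectral-curve analysis of the generic fiber. The one point requiring care is genericity — I would restrict to the open locus of $b\in\cB$ with $\widetilde X_b$ smooth and integral (nonempty since $\cK_X$ is sufficiently positive, by Bertini applied in $\operatorname{Tot}(\cK_X)$) and to the open locus in $\Bun_G^\circ(X)$ where stability holds, and note that the resulting Higgs pairs are automatically stable. I expect the genuine difficulty to be the smoothness and dimension of the generic spectral curve together with the precise bookkeeping relating $\operatorname{Pic}(\widetilde X_b)$ to $p^{-1}(b)$ for $\SL_n$ versus $\GL_n$ versus $\PGL_n$ — i.e. keeping track of the trace-zero and determinant constraints — rather than the formal involutivity, which follows from general cotangent-bundle / symplectic-reduction yoga.
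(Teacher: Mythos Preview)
Your spectral-curve analysis of the generic fiber (parts (b) and (c)) matches the paper's Part~2 closely: same genus computation $\widetilde g = n^2(g-1)+1$, same eigenline/BNR correspondence identifying the fiber with an open piece of $\Jac(C_b)$ (or the Prym for $\SL_n$), same conclusion on the fiber dimension.

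The genuine gap is in (a), involutivity. None of the ideas you list actually establishes $\{H_i,H_j\}=0$:
\begin{itemize}
\item The $\bC^\times$-action scaling $\phi$ gives homogeneity of the Hamiltonians, not their commutativity.
\item ``Functions pulled back from $M$ Poisson-commute on $T^*M$'' is true but irrelevant: the Hitchin Hamiltonians are polynomial in the \emph{fiber} coordinate $\phi$, not pullbacks from the base $\Bun_G^\circ(X)$.
\item The ``general principle'' you invoke --- that half-dimensional fibers which are orbits of a Hamiltonian action are automatically Lagrangian --- is not a theorem as stated; and knowing the fiber is an open subset of an abelian variety does not make it Lagrangian without checking that the symplectic form vanishes along it, which is involutivity again, so the argument is circular.
\item ``Flows act by Hecke-type modifications preserving $\tr\wedge^j\phi$'' is too vague to be a proof.
\end{itemize}

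You do mention in passing ``the symplectic-reduction description of $\cM_G^\circ(X)$ together with \ldots\ invariant polynomials evaluated pointwise on $\phi$''. That is the correct seed, and it is exactly what the paper develops, but in a specific way you have not written down. The paper realizes $\Bun_G^\circ(X)$ as the double quotient $G(R)\backslash G(K)/G(\cO)$ of the loop group, so $T^*\Bun_G^\circ(X)$ is the Hamiltonian reduction of $T^*G^\circ(K)\cong G^\circ(K)\times \lie{g}((t))\,dt$ by $G(R)\times G(\cO)$. On $T^*G^\circ(K)$ one takes the functions $H_{i,n}\coloneqq c_n(Q_i(\widetilde\phi))$, which depend only on the momentum $\widetilde\phi$ and are $\operatorname{Ad}$-invariant. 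Since the Poisson bracket of momenta on the cotangent bundle of a Lie group is just the Lie bracket, invariant functions of momenta commute \emph{for free} --- no computation needed. One then checks via a simple commutative diagram that these descend under reduction precisely to the Hitchin Hamiltonians. This is the argument you should supply for (a); the rest of your outline then goes through.
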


This means that coordinate functions on $\cB$, pulled back by $p$, are
Poisson-commuting and functionally independent. Explicitly, if we
choose a basis $b_1, \ldots, b_d \in \cB$ (say, compatible with the direct sum decomposition) and write $$p(E, \phi) =
\sum_{j=1}^d H_j(E, \phi) b_j,$$ then $H_j(E,\phi)$ are the Poisson-commuting 
{\it classical Hitchin Hamiltonians}, forming an integrable system. 

We will prove Theorem \ref{hisl_n} in Subsections \ref{st1},\ref{st2} and \ref{st3}.

Note that Theorem \ref{hisl_n} extends verbatim to the equivalent case
$G=\GL_n$. In this case, we just need to include a linear term into
the Hitchin map. Namely, if we set
\[\cB=\cB_{X,G}\coloneqq\bigoplus_{i=0}^{n-1} H^0(X, \cK_X^{\otimes (i+1)})\]
and 
$$
    p\colon T^*\Bun_G^\circ(X) \to \cB, \quad
   p (E, \phi)\coloneqq(-\tr \phi, \tr \wedge^2 \phi, -\tr \wedge^3 \phi, \ldots, (-1)^n \tr \wedge^n \phi).
$$
(the full collection of coefficients of the characteristic polynomial 
$\det(\lambda-\phi)$ of $\phi$), then Theorem \ref{hisl_n} with the same formulation (and proof) holds for $\GL_n$. 

\begin{remark} Another (equivalent) choice for Hitchin Hamiltonians 
is the collection of components of $(\tr\phi,\tr\phi^2,\ldots,\tr\phi^n)$. 
\end{remark} 

\begin{example}\label{gl1hit} 
If $G=\GL_1$ then the Hitchin Hamiltonians (say, for degree zero bundles) 
are just the momenta $p_i:=({\rm tr}\phi)_i$, $i=1,\ldots,g$, on $T^*{\rm Jac}(X)={\rm Jac}(X)\times H^0(\cK_X)$.
\end{example} 

\subsection{Classical Hitchin system for general \texorpdfstring{$G$}{G}}
To generalize the classical Hitchin system to an arbitrary reductive
group $G$, we first need to recall {\it Chevalley's theorem} for the Lie algebra $\lie{g}\coloneqq{\rm Lie}G$:

\begin{theorem}[see {\cite[Section 10]{E}}] The algebra $\bC[\lie{g}]^G$ 
of $G$-invariant polynomials on $\lie{g}$ is a polynomial algebra $\bC[Q_1, \ldots, Q_r]$,
where $r \coloneqq \rank G$ and $Q_i$ are homogeneous polynomials.
\end{theorem}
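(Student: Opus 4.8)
The plan is to factor the computation through a Cartan subalgebra, reducing to a statement about the Weyl group. Fix a Cartan subalgebra $\lie{h}\subset\lie{g}$, so $\dim\lie{h}=r$, let $T\subset G$ be the corresponding maximal torus and $W=N(T)/T$ the Weyl group, which acts linearly on $\lie{h}$. Restriction of polynomial functions gives a graded algebra homomorphism $\operatorname{res}\colon\bC[\lie{g}]^G\to\bC[\lie{h}]^W$ (the image is $W$-invariant because $N(T)\subset G$). The assertion then follows from two inputs: the \emph{Chevalley restriction theorem}, that $\operatorname{res}$ is an isomorphism, and the \emph{Chevalley--Shephard--Todd theorem} for the reflection group $W$ acting on $\lie{h}$ (each simple reflection fixes a hyperplane of $\lie{h}$, so $W$ is a genuine reflection group, even when $G$ is only reductive), that $\bC[\lie{h}]^W=\bC[q_1,\dots,q_r]$ for suitable homogeneous $q_i$. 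Combining, $\bC[\lie{g}]^G\cong\bC[q_1,\dots,q_r]$; since $\operatorname{res}$ is graded one sets $Q_i\coloneqq\operatorname{res}^{-1}(q_i)$, homogeneous of the same degree as $q_i$.

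For the restriction theorem, injectivity is the easy half: a $G$-invariant polynomial vanishing on $\lie{h}$ vanishes on the orbit $G\cdot\lie{h}$, which contains every regular semisimple element of $\lie{g}$ and is therefore Zariski dense, so the polynomial is zero. Surjectivity is the substantive half. The route I would take is to first produce invariants by hand: for a faithful representation $\rho\colon\lie{g}\to\lie{gl}(V)$ the functions $x\mapsto\tr(\rho(x)^k)$ are $G$-invariant, and restrict on $\lie{h}$ to the power sums $\sum_\lambda\lambda(\cdot)^k$ over the weights of $V$; running over representations and powers one obtains $r$ algebraically independent elements of $\bC[\lie{h}]^W$. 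To conclude that such invariants \emph{generate} $\bC[\lie{h}]^W$ (hence that $\operatorname{res}$ is onto), one combines two facts: $\bC(\lie{h})/\bC(\lie{h})^W$ is Galois of degree $|W|$, and $\bC[\lie{h}]$ is free over the polynomial subring generated by any homogeneous system of parameters (Cohen--Macaulayness of $\bC[\lie{h}]$), with rank read off from Hilbert series; arranging the chosen invariants so that the product of their degrees is $|W|$, a rank comparison together with the integral closedness of a polynomial ring forces equality. (Alternatively one extends a given $f\in\bC[\lie{h}]^W$ directly to the $G$-invariant function $x\mapsto f(s)$, with $s$ a point of $\lie{h}$ conjugate to the semisimple part of $x$, and checks it is a polynomial by a regularity argument across the discriminant hypersurface.)

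For Chevalley--Shephard--Todd, I would argue as usual. Let $I_+\subset\bC[\lie{h}]^W$ be the ideal of positive-degree invariants, and pick homogeneous $f_1,\dots,f_m$ whose images form a basis of $I_+/I_+^2$, equivalently a minimal homogeneous generating set of the $\bC[\lie{h}]$-ideal $I_+\bC[\lie{h}]$ (graded Nakayama). One then shows (a) $\bC[\lie{h}]^W=\bC[f_1,\dots,f_m]$ and (b) $f_1,\dots,f_m$ are algebraically independent with $m=r$. Part (b) is bookkeeping: $\bC[\lie{h}]$ is finite over $\bC[\lie{h}]^W$, which is then a finitely generated domain of Krull dimension $r$, forcing $m\ge r$, and minimality with (a) yields algebraic independence and $m=r$. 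Part (a) is the heart, and is where the reflection hypothesis is indispensable: the inductive argument showing that a homogeneous invariant lying in the $\bC[\lie{h}]$-span of some of the $f_i$ already lies in their $\bC[\lie{h}]^W$-span is powered by the observation that for a reflection $s$ the difference $p-s(p)$ is divisible by the linear form cutting out the reflecting hyperplane, which allows a descending induction on degree.

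I expect the genuine obstacles to be surjectivity of $\operatorname{res}$ and part (a) above; injectivity, part (b), and the assembly of the final statement are formal. If one is willing to cite \cite{E} (or a standard reference) for the Chevalley restriction theorem and the Chevalley--Shephard--Todd theorem outright, the proof collapses to the composition in the first paragraph, and the rest of this proposal indicates how those two inputs are established.
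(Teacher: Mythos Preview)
The paper does not prove this theorem at all; it simply states it with a citation to \cite[Section 10]{E} and then uses it. Your proposal outlines the standard proof (Chevalley restriction to $\lie{h}$, then Chevalley--Shephard--Todd for $W$), which is correct and is essentially what one finds in the cited reference, so there is nothing to compare against in the paper itself.
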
 

Let $d_i \coloneqq \deg Q_i$, so that $d_1\le d_2\le\cdots\le d_r$. 
These numbers do not depend on the choice of $Q_i$ and are called 
the {\it degrees} of $G$ (or $\lie{g}$), and it is clear that 
the number of $i$ such that $d_i=1$ is $\dim Z(G)$. Moreover, 
if $e\coloneqq\sum_{i=1}^r e_i\in \lie{g}$ is the regular nilpotent then $\ad e$ 
is a direct sum of Jordan blocks of sizes $2d_i-1$, $1\le i\le r$ (\cite[Lemma 17.1]{E}). 
Thus 
\begin{equation}\label{sumformula}
\sum_{i=1}^r (2d_i-1)=\dim G.
\end{equation} 

\begin{example}
  $\bC[\lie{gl}_n]^{\GL_n} = \bC[Q_1,Q_2,\ldots,Q_n]$, where 
  $Q_m(A)\coloneqq(-1)^m\tr \wedge^m A$. Thus the degrees 
  of $\GL_n$ are $1,2,\ldots,n$. 
  Similarly, $\bC[\lie{sl}_n]^{\SL_n} = \bC[Q_2,\ldots,Q_n]$. Thus the degrees 
  of $\SL_n$ or $\PGL_n$ are $2,\ldots,n$.
  \end{example}

Let $Q \in \bC[\lie{g}]^G$ be a homogeneous polynomial of degree $m$, and
let $(E, \phi)$ be a Higgs pair. A conjugation-invariant function like
$Q$ may be evaluated fiberwise on $\phi \in \Omega^1(X,
\ad E)$ to produce elements
\[ Q(\phi) \in H^0(X, \cK_X^{\otimes m}). \]
The Hitchin base should therefore be 
$$
\mathcal B=\mathcal B_{X,G}\coloneqq\bigoplus_{i=1}^r H^0(X,
\cK_X^{\otimes d_i}).
$$

\begin{definition}[Hitchin system for general $G$]
  The {\it Hitchin map} is
  \[ p\colon T^*\Bun_G^\circ(X) \to \cB_{X,G}, \quad p(E, \phi) \coloneqq (Q_1(\phi), \ldots, Q_r(\phi)). \]
\end{definition} 
  
 Recall that $\dim H^0(X,\cK_X^{\otimes m})$ equals $g$ if $m=1$ and 
$(2m-1)(g-1)$ if $m>1$. Hence 
$$
\dim \cB=\sum_{i=1}^r (2d_i-1)(g-1)+|\lbrace i: d_i=1\rbrace|=(g-1)\dim G+\dim Z(G)=\dim \Bun_G^\circ(X).
$$
Thus, we might hope that $p$ is an integrable system. 
And this indeed turns out to be the case. 
 
\begin{theorem}[Hitchin \cite{H}, Faltings \cite{Fa}, Ginzburg \cite{Gi}; see a discussion in \cite{BD}, 2.2.4 and 2.10] \label{hitgen}
Theorem \ref{hisl_n} holds for any connected reductive group $G$. 
\end{theorem}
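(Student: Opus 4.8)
\textbf{Proof plan for Theorem \ref{hitgen}.}

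The plan is to reduce the general reductive case to the type $A$ case proved in Theorem \ref{hisl_n}, and to isolate the two things one must check: (a) Poisson-commutativity of the pulled-back coordinate functions on $\cB_{X,G}$, and (b) generic functional independence, i.e. that $p$ is generically submersive (which, since $\dim \cB_{X,G} = \dim \Bun_G^\circ(X)$, forces the generic fiber to be a Lagrangian in $T^*\Bun_G^\circ(X)$).

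For step (a), the cleanest approach is the one that works uniformly for all $G$: the commutativity is a local statement on $\lie{g}$. Fix a faithful representation $\rho\colon G \hookrightarrow \GL_N$ (e.g. the adjoint representation plus, if $Z(G)\ne 1$, enough characters, or simply any faithful one). Composing a Higgs pair $(E,\phi)$ for $G$ with $\rho$ produces a Higgs pair $(E_\rho, \rho_*\phi)$ for $\GL_N$, and this construction is compatible with the symplectic structures in the sense that the pullback of functions along the induced map on Higgs moduli is a Poisson map (both symplectic forms come from the canonical cotangent form, and the map is essentially the differential of the induced map $\Bun_G^\circ \to \Bun_{\GL_N}$). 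Every $Q_i \in \bC[\lie g]^G$ is a polynomial in the restrictions to $\lie g$ of the $\GL_N$-invariants $\tr \wedge^m$, hence the Hitchin hamiltonians for $(X,G)$ are pullbacks of (polynomial combinations of) the Hitchin hamiltonians for $(X,\GL_N)$; since the latter Poisson-commute by Theorem \ref{hisl_n}, so do the former. (Alternatively, one invokes the uniform argument via the symbol map and the description of the Hitchin hamiltonians through invariant functions on $T^*\Bun$, as in \cite{BD}, 2.2.4; but the reduction through $\GL_N$ is the most elementary.)

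For step (b), generic surjectivity of $dp$, the argument is a dimension count on the fibers using the spectral/cameral correspondence. Over a generic point $b\in \cB_{X,G}$ one associates the cameral cover $\widetilde X_b \to X$ — a $W$-Galois cover cut out by $b$ inside the total space of $T^*X \otimes_{\bC} \lie t$, branched over the discriminant locus — and shows that the fiber $p^{-1}(b)$ is a torsor over (an open part of) the generalized Prym variety $\Bun_{T}^{\text{generic}}(\widetilde X_b)^W$ of $W$-equivariant $T$-bundles on $\widetilde X_b$. One then checks that the dimension of this Prym variety equals $\dim \cB_{X,G}$: this follows from Riemann–Roch on $\widetilde X_b$ applied to the $\lie t$-valued line bundle, together with the fact, recorded in \eqref{sumformula}, that $\sum_i (2d_i-1) = \dim G$, which governs the ramification/branching contributions. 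Equivalently, in type $A$ one uses the honest spectral curve $X_b \subset T^*X$ of Theorem \ref{hisl_n}, and for general $G$ one either runs the cameral version or reduces again through a faithful representation and identifies the $G$-Hitchin fiber inside the $\GL_N$-Hitchin fiber. Either way, matching dimensions shows $p$ is dominant, hence generically submersive; combined with (a) and $\dim \cB_{X,G} = \dim \Bun_G^\circ(X)$ this gives the Lagrangian fibration.

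The main obstacle is step (b): while Poisson-commutativity is a soft, purely local-to-global matter that transfers cleanly from $\GL_N$, the completeness/surjectivity statement genuinely requires the geometry of spectral or cameral covers and an honest dimension computation of the (Prym-type) fibers, including careful bookkeeping of the branch divisor — exactly the point at which, as the text flags, the authors only give full details for type $A$. In a complete write-up one would either develop the cameral cover formalism (following Donagi–Gaitsgory and the discussion in \cite{BD}) or content oneself, as the excerpt does, with the $\GL_n$/$\SL_n$ case plus a reference to \cite{Fa,Gi,BD} for the reductive generalization.
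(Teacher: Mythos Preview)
Your plan for step (b) is reasonable as a sketch and matches the standard literature (cameral covers \`a la Donagi--Gaitsgory), and you correctly flag that the paper only carries this out for $\GL_n$ via honest spectral curves.

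For step (a), however, your main argument has a genuine gap, and it is also not the route the paper takes. The claim that ``every $Q_i\in\bC[\lie g]^G$ is a polynomial in the restrictions to $\lie g$ of the $\GL_N$-invariants $\tr\wedge^m$'' is false for general faithful $\rho$ once $G$ is not simple. For instance, take $G=\SL_2\times\SL_2$ with any faithful $\rho$ whose blocks treat the two factors symmetrically (e.g.\ $V_1\oplus V_2$ or $V_1\boxtimes V_2$, or even the adjoint representation): then all $\tr(\rho(x)^k)$ are symmetric in the two Casimirs $C_1,C_2$, so the image of $\bC[\lie{gl}_N]^{\GL_N}$ in $\bC[\lie g]^G=\bC[C_1,C_2]$ is the proper subalgebra $\bC[C_1,C_2]^{S_2}$, and you cannot recover $C_1,C_2$ separately. (Asymmetric choices like $2V_1\oplus V_2$ still only produce an algebraic, not polynomial, dependence.) Your parenthetical suggestion of ``the adjoint representation plus enough characters'' does not help either, since a semisimple $G$ has no nontrivial characters. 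Separately, the assertion that $(E,\phi)\mapsto(E_\rho,\rho_*\phi)$ is a Poisson map needs justification: this is not the cotangent lift of $\Bun_G\to\Bun_{\GL_N}$ (which goes the other way on fibers), and its compatibility with the canonical symplectic forms depends on the pullback $\rho^*(\tr)$ agreeing with your chosen form on $\lie g$, which for non-simple $G$ is again a nontrivial constraint.

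The paper's argument for Poisson-commutativity is different and avoids all of this. It realizes $\Bun_G(X)$ as the double quotient $G(R)\backslash G(K)/G(\cO)$ and works on $T^*G(K)\cong G(K)\times\lie g((t))\,dt$. The functions $H_{i,n}\coloneqq c_n(Q_i(\widetilde\phi))$ depend only on the momentum coordinate $\widetilde\phi$ and are $\mathrm{Ad}$-invariant; but invariant functions of momenta on the cotangent bundle of a Lie group Poisson-commute automatically (their bracket is the Kirillov--Kostant bracket on $\lie g(K)^*$, and $\mathrm{Ad}^*$-invariant functions are Casimirs there). One then descends via hamiltonian reduction. This works uniformly for every connected reductive $G$ with no reference to $\GL_N$, and it is the argument you allude to only in your parenthetical ``alternatively'' remark. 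In short: make the loop-group/hamiltonian-reduction argument your primary one for (a); the reduction-to-$\GL_N$ idea, as you stated it, does not go through.
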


The restriction of the Hitchin map to the locus where it is a Lagrangian fibration is called the {\it Hitchin fibration}. 

Hitchin proved Theorem \ref{hitgen} for classical groups $G$, and then Faltings and later Ginzburg proved it for general $G$. 
The proof has two parts:
\begin{enumerate}
\item showing that the coordinates of $Q_j(\phi)$ are in involution
  (i.e., Poisson-commute);
\item showing that they are functionally independent.
\end{enumerate}
Functional independence is equivalent to $p$ being a dominant map, meaning that
the image of $p$ contains an open dense subset.

Below we will discuss part 1 for general $G$ and part 2 for $G=\GL_n$. 

\begin{remark}\label{encoun} 
It is known (\cite{BD}) that Hitchin Hamiltonians are the only global regular functions 
on the cotangent bundle $T^*{\rm Bun}_G(X)$ to the stack ${\rm Bun}_G(X)$, and 
usually the same is true for $T^*{\rm Bun}_G^\circ(X)$ (as unstable bundles occur in codimension $\ge 2$); for instance, this is obvious for $G=GL_1$ (Example \ref{gl1hit}). For this reason, we inevitably encounter the Hitchin system as soon as we start doing geometry or analysis on ${\rm Bun}_G(X)$. 
\end{remark} 

\subsection{Hamiltonian reduction} 
For the proof of Hitchin's theorem, we must first review
the {\it Marsden--Weinstein symplectic (or Hamiltonian) reduction} (see \cite{MR} for more details).

 Let $Y$ be a manifold (or
variety), and $H$ be a Lie group (or algebraic group) acting on $Y$ on
the right. In this case, $H$ acts by Hamiltonian automorphisms on
$T^*Y$, and so there is a moment map
\[ \mu\colon T^*Y \to \lie{h}^* \]
where $\lie{h}$ is the Lie algebra of $H$. This is defined to be dual
to the infinitesimal action map
\[ a\colon \mathfrak h \to \Vect(Y) = \Gamma(Y, T_Y), \]
meaning that
\[ \mu(x, p)(b) \coloneqq \inner{p, a(b)_x}, \qquad \forall (x, p) \in T^*Y, \; b \in \lie{h}. \]

\begin{theorem}[Marsden--Weinstein reduction] If $H$ acts freely on $Y$ then 
$0$ is a regular value of $\mu$ (so $\mu^{-1}(0)$ is smooth) and the quotient $\mu^{-1}(0)/H$ has a natural symplectic structure. Furthermore, there is a natural
  isomorphism of symplectic manifolds
  \[ \mu^{-1}(0)/H \cong T^*(Y/H). \]
\end{theorem}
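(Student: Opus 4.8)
The plan is to prove the Marsden--Weinstein reduction theorem in three steps: first check that $0$ is a regular value of $\mu$, then construct the symplectic form on the quotient, and finally identify the quotient with $T^*(Y/H)$. Throughout I will work with the tautological $1$-form $\theta$ on $T^*Y$, whose differential is the canonical symplectic form $\omega = d\theta$, and I will freely use that the $H$-action on $T^*Y$ is the cotangent lift of the $H$-action on $Y$, hence preserves $\theta$ (and therefore $\omega$).

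First I would verify that $0$ is a regular value. Since $H$ acts freely on $Y$, the infinitesimal action map $a\colon \lie{h}\to \Vect(Y)$ is injective at every point, i.e.\ for each $y\in Y$ the evaluation $a(\cdot)_y\colon \lie{h}\to T_yY$ is injective. At a point $(y,p)\in\mu^{-1}(0)$, the differential $d\mu_{(y,p)}\colon T_{(y,p)}(T^*Y)\to\lie{h}^*$ restricted to the vertical (cotangent-fiber) directions $T^*_yY$ is, by the formula $\mu(y,p)(b) = \inner{p,a(b)_y}$, exactly the transpose of $a(\cdot)_y$; injectivity of $a(\cdot)_y$ forces this transpose to be surjective, so $d\mu_{(y,p)}$ is surjective. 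Hence $\mu^{-1}(0)$ is a smooth submanifold of codimension $\dim\lie{h}$.

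Next I would produce the symplectic form on $M\coloneqq\mu^{-1}(0)/H$. Write $i\colon \mu^{-1}(0)\hookrightarrow T^*Y$ for the inclusion and $\pi\colon \mu^{-1}(0)\to M$ for the quotient (a submersion, since $H$ acts freely — the action on $\mu^{-1}(0)$ is free because it is free already on $Y$). The key linear-algebra fact is that, at a point $z\in\mu^{-1}(0)$, the tangent space $T_z\mu^{-1}(0) = \ker d\mu_z$ equals the $\omega$-orthogonal complement of the tangent space to the $H$-orbit $T_z(Hz) = \im a_z$ — this is the standard consequence of the moment-map identity $\iota_{a(b)}\omega = d\mu(b)$. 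It follows that $\omega$ restricted to $\ker d\mu_z$ has radical exactly $T_z(Hz)$, so it descends to a well-defined nondegenerate $2$-form $\omega_M$ on the quotient: concretely $\pi^*\omega_M = i^*\omega$. Closedness of $\omega_M$ follows since $\pi^*d\omega_M = i^*d\omega = 0$ and $\pi^*$ is injective on forms. Thus $(M,\omega_M)$ is symplectic.

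Finally I would identify $(M,\omega_M)$ with $T^*(Y/H)$. Let $q\colon Y\to Y/H$ be the quotient. There is a natural map $\Phi\colon \mu^{-1}(0)\to T^*(Y/H)$ sending $(y,p)$ (with $\mu(y,p)=0$, i.e.\ $p$ annihilates all orbit directions $a(b)_y$) to the covector at $q(y)$ obtained by pushing $p$ down through $dq_y$; this is well defined precisely because $p\in(\im a_y)^\perp = \ker(dq_y)^\perp \cong (T_{q(y)}(Y/H))^*$, and it is $H$-invariant, hence factors through a map $\overline\Phi\colon M\to T^*(Y/H)$. I would check $\overline\Phi$ is a bijection (fiberwise it is the isomorphism $(\im a_y)^\perp\cong T^*_{q(y)}(Y/H)$ just described, and it covers the identification $M/(\text{base})\cong Y/H$) and that it is a local diffeomorphism, hence an isomorphism of manifolds. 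The last thing to check is that $\overline\Phi$ pulls back the tautological $1$-form: $\Phi^*\theta_{Y/H} = i^*\theta_Y$. This is immediate from the defining property of the tautological form — $\theta_{Y/H}$ at a covector $\xi$ is $\xi\circ d(\text{projection})$ — combined with the compatibility $q\circ(\text{projection}_Y) = (\text{projection}_{Y/H})\circ\Phi$. Taking $d$ gives $\overline\Phi^*\omega_{Y/H} = \omega_M$, completing the proof.

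\smallskip

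The main obstacle is the third step, the identification $M\cong T^*(Y/H)$: one must check not merely a set-theoretic bijection but that $\overline\Phi$ is an isomorphism of manifolds and is symplectic, and the cleanest route is the tautological-$1$-form computation, which requires being careful about the several quotient and projection maps in play and verifying that the relevant diagram of cotangent projections commutes. The regularity of $0$ and the descent of $\omega$ are comparatively routine once the moment-map identity $\iota_{a(b)}\omega = d\mu(b)$ and freeness are in hand.
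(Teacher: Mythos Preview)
Your proof is correct and is the standard argument for Marsden--Weinstein reduction. Note, however, that the paper does not actually prove this theorem: it is stated as background (with a reference to \cite{MR}) and used as a black box in the construction of Hitchin systems, so there is no ``paper's own proof'' to compare against. Your three-step argument---surjectivity of $d\mu$ from injectivity of the infinitesimal action, descent of $\omega$ via the isotropy/coisotropy of the level set, and the tautological $1$-form computation for the identification with $T^*(Y/H)$---is exactly the textbook route one would find in the cited reference.
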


This can be used to construct integrable systems as follows. Suppose
$\dim Y/H = n$, and $F_1, \ldots, F_n$ are $H$-invariant functions on
$T^*Y$ which are in involution ($\lbrace F_i,F_j\rbrace=0$). Then they define functions $\overline{
F}_i$ on the quotient $T^*(Y/H)$, by first restricting $F_i$ to
the zero fiber $\mu^{-1}(0) \subset T^*Y$ of the moment map, and then descending to the quotient by $H$. 
It is easy to check
that $\{\overline F_i, \overline F_j\} = 0$. There is already the right number
of them to form an integrable system; if in addition they are
functionally independent, then $\overline F_1, \ldots, \overline F_n$ form an
integrable system on $T^*(Y/H)$.\footnote{Note however that there are too few functions to form an integrable system on $T^*Y$, so it was necessary to descend to the quotient to obtain an integrable system.}

\begin{remark} In general, especially when there is no effective way of writing the
functions explicitly, it is difficult to check whether a set of
functions are in involution. But in this method of constructing
integrable systems, sometimes the functions $F_i$ are in involution on
$T^*Y$ for silly reasons. This happens, for instance, when $Y$ is a vector space and the $F_i$ all depend only on the momentum coordinates on $T^*Y = Y \times Y^*$, or, more generally, if $Y$ is a Lie group and $F_i$ depend only on momenta on $T^*Y=Y\times Y^*$ and are conjugation invariant. In fact, this is exactly what is going to happen in the case of Hitchin systems. 
\end{remark}

\subsection{Proof for Hitchin's theorem, part 1}\label{st1}

The proof is based on an infinite dimensional version of Hamiltonian reduction, which involves additional subtleties. While this can be done with full rigor, here we will ignore these subtleties and pretend that we are in the finite dimensional case. 

Our job is to show that the coordinates of $Q_j(\phi)$ Poisson-commute. 
Without loss of generality assume that $G$ is semisimple, and let $x\in X$.
Then by Proposition~\ref{doubquot}
$$
\Bun_G(X) = G(R) \backslash G(K) / G(\cO),
$$
where $R\coloneqq R_x$, $K\coloneqq K_x$, $\cO\coloneqq\cO_x$. 
Denote the preimage of $\Bun_G^\circ(X)$ in $G(K)$
by $G^\circ(K)$. Then the group $G(R) \times G(\cO)$ acts
on $G^\circ(K)$ with stabilizer $Z(G)$. So we may express
$T^*\Bun_G^\circ(X)$ as the Hamiltonian reduction of $T^*G^\circ(K)$
by the free action of $(G(R) \times G(\cO))/Z(G)$. 

We will now construct some $(G(R) \times G(\cO))/Z(G)$-invariant functions on $T^*\Bun_G^\circ(X)$ which manifestly Poisson commute, and then descend them to the Hamiltonian reduction to get the Hitchin Hamiltonians.  
To this end, let us trivialize the
cotangent bundle of $T^*G^\circ(K)$ by left (or right) translations.
Thus we obtain an isomorphism $T^*G^\circ(K)\cong G^\circ(K)\times \lie{g}(K)^*$. 
Observe that the Lie
algebra $\lie{g}(K)$ carries a $G(K)$-invariant pairing, given by
\[ \inner{a(t), b(t)}_{\lie{g}(K)} \coloneqq \Res_{t=0} \inner{a(t), b(t)}_{\lie{g}} \, dt, \]
where $t$ is a formal coordinate near $x$. 
This allows us to identify $\lie{g}(K)^*$ with $\lie{g}((t)) \, dt$.
Hence we get an isomorphism
\[ T^*G^\circ(K) \cong G^\circ(K) \times \lie{g}((t)) \, dt. \]
Thus points of $T^*G^\circ(K)$ can be viewed as pairs $(g, \widetilde \phi)$, 
where $g\in G^\circ(K)$ and $\widetilde \phi$ is a Higgs field on $D_x^\times$. 
So, as before, we can apply invariant functions $\{Q_i\}_{i=1}^r$ on 
$\lie{g}$ to $\widetilde\phi$ to get the functions
\[ H_{i,n} \coloneqq c_n(Q_i(\widetilde \phi)), \quad 1 \le i \le r,\ n \in \bZ \]
(the $n$-th Laurent coefficient of $Q_i(\widetilde \phi)$) 
on $T^*G^\circ(K)$. 

Observe that by construction the functions $H_{i,n}$
depend only on the momentum coordinates on $T^*G^\circ(K)$, and that
the Poisson bracket of momenta on the cotangent bundle to a Lie group coincides with the commutator of the corresponding 
Lie algebra elements. Since in addition the $H_{i,n}$ are invariant, we obtain that 
\[ \{H_{i,n}, H_{j,m}\} = 0. \]

Now, it is easy to check that the following square commutes, with $H_i:=Q_i(\widetilde \phi)$:
\begin{equation} \label{comdia}
  \begin{tikzcd}
    T^*G^\circ(K) \ar{r}{(H_1, \ldots, H_r)} & \bigoplus_{i=1}^r \mathbf k((t)) \, (dt)^{d_i} \\
    \mu^{-1}(0) \ar[hookrightarrow]{u} \ar[twoheadrightarrow]{d} & {} \\
    T^*\Bun_G^\circ(X) \ar{r}{p} & \bigoplus_i H^0(X, \cK_X^{\otimes d_i}) \ar[hookrightarrow]{uu}
  \end{tikzcd}
\end{equation}
where the vertical arrow on the right is just the Taylor
expansion at $x$ of global  differentials on $X$ of degree $d_i$.
Thus, since the Taylor expansion map is injective, 
Poisson-commutativity for the coordinates of the Hitchin map $p$
follows from the Poisson-commutativity of $H_{i,n}$, as desired. 

\begin{remark}\label{desce} We see that the Hamiltonians $H_{i,n}$ descend to $0$ 
for $n<0$ and to $c_n(Q_i(\phi))$ for $n\ge 0$. 
\end{remark} 

\subsection{Spectral curves}\label{st2}

Now we would like to prove functional independence of the Hitchin Hamiltonians for $G = \GL_n$ (the case $n=1$ is trivial, so we will assume that $n\ge 2$). This will use an important technique which appears across the field of integrable
systems: the theory of {\it spectral curves}. 

Take a point
\[ b = (b_1, \ldots, b_n) \in \cB = \bigoplus_{i=0}^{n-1} H^0(X, \cK_X^{\otimes (i+1)}). \]
 Consider the
polynomial
\[ \lambda^n + b_1 \lambda^{n-1} + \cdots + b_n \eqqcolon \prod_{i=1}^n (\lambda - \lambda_i). \]
Since $b_m\in H^0(X, \cK_X^{\otimes m})$, the quantities $\lambda_i$ are all $1$-forms on
$X$ (more precisely, they are branches of a multivalued 1-form). So
\[ \{\lambda_1(x), \ldots, \lambda_n(x)\} \subset T^*_x X. \]
Varying $x \in X$ produces a closed subset $C_b \subset T^*X$ associated to
$b$ - the graph, or Riemann surface, of the multi-valued 1-form
$\lambda(x)$ defined by the equation 
$$
\lambda^n+b_1\lambda^{n-1}+\cdots+b_n=0.
$$ 
In fact, it is clear that $C_b$ is a projective algebraic curve inside $T^*X$ defined
by the equation
\[ y^n + b_1(x)y^{n-1} + \cdots + b_n(x) = 0, \]
where $y$ is the coordinate along the cotangent fibers, and that the
natural projection $\pi\colon C_b \to X$ has degree $n$.

\begin{definition}
$C_b$ is called the {\it spectral curve} of $b$ and the map $\pi$ is called the {\it spectral cover}.
\end{definition}

Now let $(E, \phi) \in T^*\Bun_G^\circ(X)$. Applying $p$ produces $p(E,
\phi) = (b_1, \ldots, b_n)$ where the $b_i$ are just the coefficients of
the characteristic polynomial of $\phi$. In other words,
\[ \lambda^n + b_1 \lambda^{n-1} + \cdots + b_n = \det(\lambda - \phi), \]
and the $\lambda_i(x)$ are just the eigenvalues of $\phi(x)$, for $x \in
X$. So the curve $C(E,\phi)\coloneqq C_{p(E, \phi)}$ is traced out in $T^*X$ by the spectrum of
$\phi(x)$ as $x \in X$ varies, which explains the term ``spectral curve". (It is important to keep in mind that
the eigenvalues are $1$-forms, so they naturally live in $T^*X$).

\begin{theorem}[\cite{H}] \label{smirr} The spectral curve 
$C_b$ is smooth and irreducible for generic $b \in \cB$. 
\end{theorem}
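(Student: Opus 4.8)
The plan is to use Bertini-type genericity arguments applied to the linear system defining the spectral curve inside the total space of the line bundle $\cK_X$. First I would recall that $C_b$ sits inside the surface $S \coloneqq \mathbf{Tot}(\cK_X)$, the total space of the canonical bundle, which is a smooth quasi-projective surface fibered over $X$. The defining equation $y^n + b_1(x) y^{n-1} + \cdots + b_n(x) = 0$ exhibits $C_b$ as a divisor in $S$ lying in the linear system $|n \cdot \mathcal{C}_0|$, where $\mathcal{C}_0 \subset S$ is the zero section and we use the identification $\cO_S(\mathcal{C}_0)|_{\mathcal{C}_0} \cong \cK_X^{-1}$ (so that sections of $\cK_X^{\otimes m}$ pulled back and multiplied by $y^{n-m}$ give the relevant terms). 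The key point is that as $b$ ranges over $\cB = \bigoplus_{i} H^0(X, \cK_X^{\otimes i})$, the curves $C_b$ sweep out a linear subsystem of $|n\mathcal{C}_0|$ on $S$, and one checks this subsystem is \emph{base-point free} away from issues at infinity: given any point $(x_0, y_0) \in S$ with $y_0 \neq 0$, the section $y^n - y_0^{n-1} \cdot (\text{a form vanishing to first order nowhere near } x_0) \cdot y^{n-1}$ can be arranged not to vanish there, using that $H^0(X, \cK_X^{\otimes m})$ is large for $m \geq 2$ and separates points/tangent directions when $g \geq 2$; and along the zero section $y = 0$ one uses the $b_n \in H^0(X, \cK_X^{\otimes n})$ term, whose zero locus on $\mathcal{C}_0 \cong X$ is a generic effective divisor of degree $n(2g-2)$, avoidable pointwise.

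The main steps are then: (1) verify base-point freeness of the relevant linear system on $S$, at least on a Zariski-open set containing all the geometric points we care about, handling the zero section separately via the discriminant; (2) apply Bertini's theorem to conclude that the generic member $C_b$ is smooth as a curve in $S$ — here one must be slightly careful because $S$ is not projective, so I would either compactify $S$ fiberwise to $\bar{S} = \bP(\cK_X \oplus \cO_X)$ and control the behavior of $C_b$ near the section at infinity (it meets it in the divisor cut out by $b_n = 0$, or by the leading coefficient, in a transverse and smooth way for generic $b$), or invoke the affine version of Bertini over the open surface; (3) prove irreducibility. For irreducibility, the cleanest route is monodromy: the spectral cover $\pi \colon C_b \to X$ has degree $n$, and its branch locus is the discriminant divisor $\Delta(b) \subset X$ of the polynomial $\lambda^n + b_1 \lambda^{n-1} + \cdots + b_n$; one shows that for generic $b$ the discriminant has only simple zeros (so the cover has only simple ramification), and then the monodromy representation $\pi_1(X \setminus \Delta(b)) \to S_n$ is surjective because it is generated by transpositions and is transitive — transitivity itself follows from connectedness, which can be bootstrapped from the fact that the total space of all pairs $(b, C_b)$ is irreducible (it fibers over the irreducible base $\cB$ with irreducible generic fiber, or one exhibits a single smooth irreducible $C_b$ by an explicit choice and invokes semicontinuity).

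I expect the \textbf{main obstacle} to be the behavior at the section at infinity of the compactified surface $\bar{S}$ and, relatedly, making the Bertini argument fully rigorous in the non-projective setting: one must confirm that the generic spectral curve does not acquire singularities or extra components "at infinity," and that the linear system genuinely separates tangent vectors everywhere it needs to, including at the finitely many points where the naive base-point-free argument is delicate (e.g.\ Weierstrass points, or points where all the $b_i$ for $i < n$ are forced to interact). A secondary subtlety is establishing simplicity of the generic discriminant: this requires showing the map $\cB \to |\text{divisors of degree } n(2g-2) \text{ on } X|$, $b \mapsto \Delta(b)$, is dominant, or at least that its image is not contained in the (codimension-one) locus of non-reduced divisors. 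Both obstacles are standard in the spectral-curve literature (going back to Hitchin's original paper and to Beauville–Narasimhan–Ramanan), and the cleanest presentation probably compactifies once and for all, computes the class of $C_b$ in $\bar S$, checks the linear system is base-point free on $\bar S$ except along a fixed component it does not meet, and then quotes Bertini plus the monodromy argument verbatim.
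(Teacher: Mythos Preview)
Your approach is correct in outline but takes a substantially more elaborate route than the paper's. The paper's proof is a two-line reduction: since smoothness and irreducibility of $C_b$ are open conditions on $b \in \cB$, it suffices to exhibit a \emph{single} $b$ for which $C_b$ is smooth and irreducible. The paper chooses $b = (0, \ldots, 0, s)$ with $s \in H^0(X, \cK_X^{\otimes n})$ a section having only simple zeros (whose existence is proved by a short Riemann--Roch dimension count, the paper's Lemma~\ref{simzer}); then $C_b$ is the cyclic cover $y^n = s(x)$ in $T^*X$, which is visibly smooth and irreducible. This bypasses Bertini, base-point-freeness, compactification of $S$, and monodromy entirely. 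You actually mention this ``explicit example plus semicontinuity'' idea as one option inside your irreducibility step; the paper simply promotes it to the \emph{entire} proof, for smoothness and irreducibility simultaneously. Your Bertini/BNR route does work and gives more information (e.g.\ the full monodromy group of the spectral cover), but for the bare statement it is overkill. Incidentally, your concern about behavior at infinity in $\bar S$ is unnecessary: the defining polynomial is monic in $y$, so $\pi\colon C_b \to X$ is finite, hence $C_b$ is already proper and never meets the section at infinity.
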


\begin{proof} Note that smoothness and irreducibility of $C_b$ are open conditions 
with respect to $b$. Thus it suffices to show that there exists at least one $b$ for which $C_b$ is smooth and irreducible. 

\begin{lemma}\label{simzer}
  \begin{enumerate}[label=(\roman*)]
  \item If $L$ is a line bundle on $X$ of degree $d\ge 2g$ then a
    generic section of $L$ has only simple zeros.

  \item A generic section of $\cK_X^{\otimes n}$ has only simple zeros.
  \end{enumerate}
\end{lemma} 

\begin{proof} (i) The Riemann-Roch theorem implies that 
if $M$ is a line bundle on $X$ of degree $m\ge 2g-2$ then 
$\dim H^0(X,M)=m-g+1$ unless $m=2g-2$, $M=\cK_X$, in which case 
$\dim H^0(X,M)=m-g+2=g$. This means that the variety $Y$ 
of sections of $L$ with a double zero 
has dimension $d-g$. Indeed, such a section is determined by 
the position $x$ of the double zero on $X$ and a section 
of the line bundle $M\coloneqq L\otimes \cO(-2x)$, 
and $H^0(X,L\otimes \cO(-2x))$ has dimension 
$d-g-1$ unless $L\otimes \cK_X^{-1}=\cO(2x)$. But this 
can only happen for finitely many $x$ and in this case 
$H^0(X,L\otimes \cO(-2x))=d-g$. Thus $Y\ne H^0(X,L)\cong \mathbb C^{d-g+1}$, 
which implies the statement.

(ii) follows from (i) and the fact that the degree of $\cK_X^{\otimes n}$
is $2n(g-1)$, which is $\ge 2g$ for $g\ge 2$.
\end{proof} 

By Lemma~\ref{simzer}(ii), a generic section $s$ of $\cK_X^{\otimes n}$ has only simple zeros. Thus for $b\coloneqq (0,\ldots,0,s)$, the spectral curve $C_b$ given by the equation $y^n=s(x)$ in $T^*X$ 
is smooth and irreducible, as desired. 
\end{proof}

\begin{theorem}[\cite{H}] The genus of the spectral curve for generic $b$ is given by the formula 
  $$
  g(C_b) = n^2(g-1) + 1.
  $$
\end{theorem}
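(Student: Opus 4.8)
I would read off the genus from the arithmetic genus of $C_b$. By Theorem~\ref{smirr}, for generic $b$ the curve $C_b$ is smooth and irreducible, hence a smooth connected projective curve ($C_b$ is projective because the spectral cover $\pi\colon C_b\to X$ is finite and $X$ is projective), so $\chi(\cO_{C_b})=1-g(C_b)$. Everything then comes down to identifying the pushforward $\pi_*\cO_{C_b}$ as a sheaf on $X$ and applying Riemann--Roch.

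Recall that $C_b$ sits inside the total space $T^*X=\mathrm{Tot}(\cK_X)$, with bundle projection $p$, as the zero locus of $\lambda^n+p^*b_1\,\lambda^{n-1}+\cdots+p^*b_n$, where $\lambda$ is the tautological fibre coordinate; since $b_i\in H^0(X,\cK_X^{\otimes i})$ and $\lambda$ transforms as a section of $p^*\cK_X$, this is a well-defined section of $p^*\cK_X^{\otimes n}$. Because this equation is monic of fibre-degree $n$, the map $\pi$ is finite with $\pi_*\cO_{C_b}$ locally free of rank $n$, generated over $\cO_X$ by $1,\lambda,\dots,\lambda^{n-1}$ in any local frame of $\cK_X$. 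Tracking how $\lambda^i$ transforms under a change of coordinate on $X$ identifies the summand locally generated by $\lambda^i$ with $\cK_X^{\otimes(-i)}$, so $\pi_*\cO_{C_b}\cong\bigoplus_{i=0}^{n-1}\cK_X^{\otimes(-i)}$. Since $\pi$ is finite we have $\chi(\cO_{C_b})=\chi(X,\pi_*\cO_{C_b})$, and by Lemma~\ref{euchar}, $\chi(X,\cK_X^{\otimes(-i)})=\deg(\cK_X^{\otimes(-i)})-(g-1)=-(2i+1)(g-1)$. Hence
\[
\chi(\cO_{C_b})=\sum_{i=0}^{n-1}\big(-(2i+1)(g-1)\big)=-n^2(g-1),
\]
using $\sum_{i=0}^{n-1}(2i+1)=n^2$, and comparing with $\chi(\cO_{C_b})=1-g(C_b)$ gives $g(C_b)=n^2(g-1)+1$.

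\textbf{Main obstacle.} The only genuinely delicate step is the identification $\pi_*\cO_{C_b}\cong\bigoplus_{i=0}^{n-1}\cK_X^{\otimes(-i)}$ — in particular pinning down the twist and its sign by correctly tracking how the tautological coordinate on $T^*X$ changes under coordinate changes on $X$; everything afterwards is routine Riemann--Roch bookkeeping. As an independent check one can instead run Riemann--Hurwitz for $\pi\colon C_b\to X$: for generic $b$ the curve $C_b$ is smooth, and the branch divisor of $\pi$ is the zero divisor of the discriminant of $\lambda^n+b_1\lambda^{n-1}+\cdots+b_n$, which is a section of $\cK_X^{\otimes n(n-1)}$; thus the ramification divisor has degree $n(n-1)(2g-2)$ and $2g(C_b)-2=n(2g-2)+n(n-1)(2g-2)=2n^2(g-1)$, recovering the same formula.
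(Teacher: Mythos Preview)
Your proof is correct and takes a genuinely different route from the paper's. The paper argues by deformation invariance: it specializes to the particular point $b=(0,\ldots,0,s)$ with $s\in H^0(X,\cK_X^{\otimes n})$ having only simple zeros (supplied by Lemma~\ref{simzer}), and then applies the topological Riemann--Hurwitz formula to the explicit cyclic cover $y^n=s(x)$, counting its $2n(g-1)$ total ramification points. You instead stay at a generic $b$, identify $\pi_*\cO_{C_b}\cong\bigoplus_{i=0}^{n-1}\cK_X^{\otimes(-i)}$, and read off $\chi(\cO_{C_b})$ via Lemma~\ref{euchar}. Your approach avoids the specialization step and the appeal to Lemma~\ref{simzer}, and in fact computes the arithmetic genus of \emph{every} spectral curve (smooth or not), which is a strictly stronger statement; the paper's approach is more elementary and visual, needing only the classical Riemann--Hurwitz count at a single well-chosen $b$. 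Your discriminant check is close in spirit to the paper's argument but not identical: the paper never invokes the discriminant, and your version implicitly uses that for generic $b$ the ramification divisor has the expected degree $n(n-1)(2g-2)$, which is most cleanly justified by adjunction ($\cK_{C_b}\cong\pi^*\cK_X^{\otimes n}$ since $\cK_{T^*X}$ is trivial) rather than by analyzing the zeros of the discriminant directly.
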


\begin{proof}
  Since $g(C_b)$ is deformation-invariant,  we can compute it at the
  point 
  $$
  b_1 = b_2 = \cdots = b_{n-1} = 0,\ b_n=s,
  $$ 
  where $s$ is a section 
  of $\cK_X^{\otimes n}$ with simple zeros, whose existence is guaranteed by Lemma \ref{simzer}(ii). Then the degree $n$ map $\pi\colon C_b\to X$ 
  is unramified except at the zeros of the section $s$. 
   So $g(C_b)$ may be computed by
  the Riemann--Hurwitz formula. Namely, since
  $s \in H^0(X, \cK_X^{\otimes n})$
  is a section of a degree $2n(g-1)$ bundle, it has 
  $2n(g-1)$ zeros. Thus the Riemann--Hurwitz formula 
  for the Euler characteristic of $C_b$ gives
  $$
    \chi(C_b) = \left(\chi(X) - 2n(g-1)\right) n + 2n(g-1)  = -n^2 (2g - 2).
  $$
  (as $\chi(X) = 2 - 2g$). Thus
  \[ g(C_b) =\frac{2-\chi(C_b)}{2}=  n^2(g-1)+1. \qedhere \]
\end{proof}

\subsection{Proof for Hitchin's theorem, part 2, for \texorpdfstring{$G = \GL_n$}{G = SLn}}\label{st3}
Let us now proceed with part 2 of the proof of Hitchin's theorem. 
If we have a Higgs pair $(E,\phi)$ such that the spectral curve of $C(E,\phi)$ is $C$, then
there is an {\it eigenline bundle} $L_\phi$ on $C$. Namely, the fiber of
$L_\phi$ at a point $\lambda \in C$ lying over $x \in X$ is the
eigenline of $\phi(x)$ with eigenvalue $\lambda$, in the generic
situation where all the eigenvalues are distinct. By analyzing the ramification points, it is easy to show that $L_\phi$ extends naturally to the whole $C$. 

The vector bundle $E$ is then reconstructed from $C$ and $L_\phi$ as the pushforward of $L_\phi$: 
\[ E \cong \pi_* L_\phi. \]
(Algebraically, this means that Zariski locally on $X$ we regard the $\cO_C$-module $L_\phi$ as 
an $\cO_X$-module using the map $\pi^*\colon \cO_X\to \cO_C$.)
This is because the fiber $E_x$ is the direct sum
$\bigoplus_{\lambda \in \pi^{-1}(x)} (L_\phi)_\lambda$ when
all the eigenvalues are distinct. 

Moreover, we can also recover
$\phi$ from $L_\phi$, because the action of $\phi$ on $E$ is just
multiplication by the cotangent coordinate on $L_\phi$.

Theorem~\ref{smirr} says that the spectral curve $C_b$ for generic $b
\in \cB$ is smooth and irreducible. In fact, for such $C_b$ and a
generic line bundle $L$ on $C_b$, the reconstructed vector bundle $E$
is stable. Thus, $C_b$ is smooth and irreducible for generic $b \in
\cB$ in the image of the Hitchin map $p$.

The line bundle $L_\phi$ on $C_b$ has degree $d =
  n(n-1)(1-g) + \deg E$ \cite{H}. So, if $\deg E$ is fixed, then $\deg
  L_\phi$ is constant on the component containing $E$. Then
\[ L_\phi \in \Pic_d(C_b) \cong \Jac(C_b) \]
and so $p^{-1}(b) \subset T^*\Bun_G^\circ(X)$ gets identified with a
subset of $\Jac(C_b)$. Thus for generic $b$, 
\[ \dim p^{-1}(b) \le  n^2(g-1)+1 \]
(the dimension of $\Bun_G^\circ(X)$). So it follows from part 1 
that $p$ is generically a Lagrangian fibration, as desired. 

\begin{remark} We see that for $G=\GL_n$ a generic fiber $p^{-1}(b)$ of the Hitchin map $p$ is an open set in an abelian variety --- the Jacobian of the spectral curve $C_b$. Similarly, for $G=\SL_n$,  $p^{-1}(b)$ is generically an open set in the kernel of $\pi_\bullet\colon \Jac(C_b)\to \Jac(X)$, where $\pi_\bullet$ is the {\it norm map} of Jacobians induced by $\pi$ (this kernel is also an abelian variety but not a Jacobian, in general; it is called the {\it Prym variety} of the map $\pi$). The missing points of these abelian varieties are added back when one extends $p$ to 
the Hitchin moduli space $\mathcal M_G(X)$ (see Remark \ref{hms}), as the extended Hitchin map 
$p: \mathcal M_G(X)\to \mathcal B$ is proper (\cite{H}). This is consistent with the Liouville--Arnold theorem (\cite{A}), which says that fibers of a Lagrangian fibration (i.e., common level sets of an integrable system), when compact and connected, are Lagrangian tori, and the Hamiltonian dynamics is a linear flow on each of them. 
\end{remark}

\section{Classical Hitchin systems for \texorpdfstring{$G$}{G}-bundles with parabolic structures and twisted classical Hitchin systems}
\label{sec:hitchin-with-parabolic-and-twists}

\subsection{Principal bundles with parabolic structures} 

Let $G$ be a connected reductive group. Recall that a closed subgroup
$P \subset G$ is called {\it parabolic} if it contains a Borel
subgroup, or, equivalently, if the quotient $G/P$ is a projective
variety (called the {\it partial flag variety} corresponding to $P$).
For a subdiagram $D$ of the Dynkin diagram $D_G$ of $G$, there is a
parabolic subgroup $P(D)\subset G$, the connected subgroup whose Lie
algebra is generated by the Chevalley generators $e_i,h_i$, $i\in D_G$
and $f_i,i\in D$, and every parabolic subgroup is conjugate to $P(D)$
for a unique $D$. Borel subgroups $B$ are the smallest parabolics ---
those conjugate to $P(\emptyset)$ --- and then $G/P=G/B$ is the (full)
{\it flag variety}. For instance, if $G=\GL_n$, then subdiagrams of
the Dynkin diagram correspond to compositions $n = n_1 + \cdots +
n_r$, and the corresponding parabolic subgroup $P(n_1,\ldots,n_r)$
consists of upper block-diagonal matrices with blocks of size $n_1,
\ldots, n_r$. The partial flag variety $G/P$ is then the variety
$\mathcal F_{n_1,\ldots,n_r}(\mathbf k^n)$ of partial flags $0=V_0\subset
V_1\subset\ldots\subset V_r=\mathbf k^n$ such that $\dim V_i/V_{i-1}=n_i$
for all $i$ (note that this description depends only on $n_1, \ldots,
n_r$, and not the choice of $P$). The smallest parabolics are the
Borel subgroups, conjugate to the group $P(1,\ldots,1)$ of upper
triangular matrices, and then $G/B$ is the variety $\mathcal
F(n)=\mathcal F_{1,\ldots,1}(\mathbf k^n)$ of full flags in $\mathbf k^n$.

Now let $X$ be a smooth irreducible projective curve, $t_1,\ldots,t_N\in X$, and 
$P_1,\ldots,P_N\subset G$ be parabolic subgroups. 
Denote by $\Bun_G(X, t_1, \ldots, t_N, P_1, \ldots, P_N)$ the moduli
stack of $G$-bundles on $X$ with $P_i$-structures at $t_i$ for $i =
 1, \ldots, N$. Points of this stack are often called {\it parabolic bundles}.

It is clear that we have a fibration
\[ \theta\colon \Bun_G(X, t_1, \ldots, t_N, P_1, \ldots, P_N) \to \Bun_G(X) \]
(forgetting the parabolic structures) with fiber $G/P_1 \times \cdots \times G/P_N$.

\begin{example}\label{examPstr}
  Let $\cE$ be a $\GL_n$-bundle on $X$ and $E$ denote the associated
  rank $n$ vector bundle. Canonically, the fiber of $\cE$ at a point
  $x \in X$ is $\cE_x = \{\text{bases in } E_x\}$. In particular, if
  $P=P(n_1,\ldots,n_r)$ then $P$ is the stabilizer of a partial flag
  \[ 0 \subset V_1 \subset \cdots \subset V_r = \mathbf k^n \]
  where the quotients $V_i/V_{i-1}$ are vector spaces of dimensions
  $n_i$, for $i = 1, \ldots, r$. So a $P$-structure on $E$ at $x$ is the set of bases
  compatible with this flag, i.e. such that there is a nested sequence of
  subsets of the basis which are bases of the $V_i$. Thus, choosing a
  $P$-structure on $E$ at $x$ is equivalent to fixing a partial flag in $E_x$
  of type $(n_1,\ldots,n_r)$. 
  
It follows that the fiber of $\theta$ at $E$ is canonically 
  $\theta^{-1}(E)=\prod_{j=1}^N \mathcal F_{n_{j1},\ldots,n_{jr_j}}(E_{t_j})$, 
  where $(n_{j1},\ldots,n_{jr_j})$ is the type of $P_j$. 

  The basic example we will consider is $G = \GL_2$ and $P_i = B$ is the Borel subgroup. A $B$-structure at $t_i$ is then a choice of a line $\ell_i
  \subset E_{t_i}$, so the fiber of $\theta$ over $E$ in this case is 
  $\theta^{-1}(E)=\prod_{j=1}^N\mathbb P E_{t_j}$. 
\end{example}

One reason to consider parabolic bundles is that it gives rise to interesting problems already for $g = 0$ and $g = 1$, which leads to nice explicit formulas. Indeed, without parabolic structures, there are no stable bundles for $g < 2$; all bundles have non-trivial automorphism groups, even if $G$ is adjoint. However in presence of parabolic structures, automorphisms must preserve it, so the automorphism group shrinks. In particular, for a sufficient number of marked points there will be a lot of bundles with trivial automorphism group.

For example, if $N \ge 3$ and $G$ is adjoint, then a generic $G$-bundle on
$\bP^1$ with parabolic structures has trivial automorphism
group. For example, consider $G = \PGL_2$ and let $E$ be the trivial
$G$-bundle on $X$. Then $\Aut(E) = \PGL_2$. But we have parabolic
structures $\ell_1, \ldots, \ell_N$, where $\ell_i \in \bP E_{t_i} =
\bP^1$ for each $i$. So the set $\Bun_G^{\text{triv}}(X, t_1, \ldots,
t_N, P_1, \ldots, P_N)$ of parabolic structures on the trivial bundle is
just $[(\bP^1)^N / \PGL_2]$. This is still stacky, because when the $N$
points $\ell_1,\ldots,\ell_N$ in $\bP^1$ coincide, there is still a non-trivial automorphism
group. But we can consider the smaller open set given by points 
$(\ell_1, \ldots, \ell_N) \in (\bP^1)^N$ such that $\ell_{N-2},\ell_{N-1},\ell_N$ are distinct. It is
well-known that $\PGL_2$ acts simply $3$-transitively on $\bP^1$, i.e. given
three distinct points on $\bP^1$, there is a unique element in
$\PGL_2$ which sends them to $(0, 1, \infty)$. Hence the open set parameterizing $N$-tuples of points with the last three points distinct is 
\[ (\bP^1)^{N-3} \subset [(\bP^1)^N / \PGL_2]. \]
Note that it is a smooth projective variety.

In general,
\[ \Bun_G^{\text{triv}}(X, t_1, \ldots, t_N, P_1, \ldots, P_N) \cong \bigg(\prod_i G/P_i\bigg) / G \]
where the $G$-action is diagonal.

\begin{remark} Like before, the Hitchin system for parabolic bundles 
should be defined on the cotangent bundle of the variety of {\it stable parabolic bundles.} There are many notions of stability for bundles with parabolic structures, depending on parameters called {\it weights} (see \cite{MSe}), but for us it will not matter which notion we use, since it does not matter much on which 
open dense subvariety on the stack of bundles we initially define the system. 
In any case, in the examples we consider, the trivial bundle with generic parabolic structures will be stable. 
\end{remark} 

\subsection{Classical Hitchin systems with parabolic structures}

Recall that we have constructed the Hitchin system by realizing
$\Bun_G(X)$ as a double quotient of the loop group and then descending invariant
functions from the cotangent bundle to 
the loop group to the double quotient. We may do the same when there are
parabolic structures. Namely, recall that
\[ \Bun_G(X) = G(R_{t_1, \ldots, t_N}) \big\backslash \prod_i G(K_{t_i}) \big/ \prod_i G(\cO_{t_i}), \]
where $R_{t_1,\ldots,t_N}\coloneqq\mathbf k[X\setminus \lbrace
  t_1,\ldots,t_N\rbrace]$, and parabolic structures are local at each
of the $t_i$, so we should modify the right quotient. Let $\ev\colon
G(\mathbf k[[t]]) \to G$, $g(t) \mapsto g(0)$ be the evaluation function
at $t=0$, and let $\widetilde P_i \coloneqq \ev^{-1}(P_i)$. In other
words, $\widetilde P_i$ consists of Taylor series whose constant term
lies in $P_i \subset G$. Then
\[ \Bun_G(X, t_1, \ldots, t_N, P_1, \ldots, P_N) = G(R_{t_1, \ldots, t_N}) \big\backslash \prod_i G(K_{t_i}) \big/ \prod_i \widetilde P_i. \]
The discrepancy between $\Bun_G(X, t_1, \ldots, t_N, P_1, \ldots,
P_N)$ and $\Bun_G(X)$ is therefore $\prod_i G/P_i$, exactly as stated earlier.

Now consider the space $T^*(\prod_{j=1}^N G(K_{t_j}))$. 
This is the set of $(g_1,\ldots,g_N,\widetilde \phi_1,\ldots,\widetilde\phi_N)$ 
where $g_j\in G(K_{t_j})$ and $\widetilde \phi_j$ are Higgs fields 
on $D_{t_j}^\times$. So we may take the usual Hamiltonians 
$$
H_{i,j,n}\coloneqq
c_n(Q_i(\widetilde \phi_j))
$$
on this space (using some formal coordinates $z_j$ near $t_j$)
and do the same reduction as before but now with respect to the
subgroup $G(R_{t_1, \ldots, t_N}) \times \prod_i \widetilde
P_i$. The result is an integrable system on $T^*\Bun_G^\circ(X, t_1, \ldots,
t_N, P_1, \ldots, P_N)$. Points of this space are pairs $(E, \phi)$
where $E$ is a bundle with parabolic structures, and $\phi \in
\Omega^1(X \setminus \{t_1, \ldots, t_N\}, \ad E)$ is a Higgs field
      {\it with tame singularities and nilpotent residues}. Namely, it is easy to check that the condition at the singularities is that 
  $\phi$ has at most first-order poles only at the points $t_1,
  \ldots, t_N$, and the residue $\Res_{t_i} \phi$ {\it strictly
    preserves} the flag $\lbrace V_j\rbrace$ specified by the parabolic structure
  at $t_i$.
Here, ``strictly'' means that $\Res_{t_i} \phi$ lies in the nilpotent
radical of the Lie algebra $\mathfrak p_i\coloneqq{\rm Lie}P_i$. For instance, in the $G = \GL_n$ case, this means that the
  residue preserves the flag and acts by $0$ on the associated graded
  of $\mathbf k^n$ under the flag filtration.

\subsection{Garnier system} 
  Let us compute the classical Hitchin system for $\PGL_2$ in genus $g = 0$. For
  this purpose, we will assume for convenience that $t_1, \ldots, t_N
  \in \bA^1 \subset \bP^1$ and that the parabolic structure at $t_j$ is
  given by $y_j \in \bA^1\subset \mathbb P^1=\PGL_2/B$. 
  
  Let $z$ be a coordinate on $\bA^1 \subset \bP^1$. The Higgs field
  $\phi$ is a $1$-form with simple poles at $t_j$, valued in
  $\lie{sl}_2$. So
  \[ \phi = \sum_{j=1}^N \frac{A_j}{z - t_j} \, dz, \qquad A_j=\begin{pmatrix} a_j & b_j \\ c_j & -a_j \end{pmatrix} \in \lie{sl}_2, \]
  satisfying the following conditions. First, $\phi$ must be regular at
  $\infty \in \bP^1$ because there is no marking/puncture there. This is
  the case if and only if 
 \begin{equation}\label{sumai0}
 \sum_{j=1}^N A_j = 0.
 \end{equation} 
 Second, $A_j \begin{pmatrix} y_j \\ 1 \end{pmatrix} = 0$ (in particular, since $A_j$ has trace $0$, it must
  be nilpotent).
  This is the condition
  \[ \begin{pmatrix} a_j & b_j \\ c_j & -a_j \end{pmatrix} \begin{pmatrix} y_j \\ 1 \end{pmatrix} = 0, \]
  which says that $a_j= c_jy_j$ and $b_j = -a_jy_j$, so $b_j=-c_jy_j^2$. Hence
    \begin{equation}\label{Aimat}
  A_j = c_j \begin{pmatrix} y_j & - y_j^2 \\ 1 & -y_j \end{pmatrix}.
  \end{equation}
  One may check that $c_j=p_j$ are the momentum coordinates, i.e., 
  the symplectic form is $\omega=\sum_j dp_j \wedge dy_j$.
  
  Let us now compute the Hitchin Hamiltonians. The generating function for them is  
  \[ H(z) = \frac{1}{2}\tr \phi^2 = \frac{1}{2}\tr\left(\sum_{i,j} \frac{A_i}{z - t_i} \frac{A_j}{z - t_j}\right) \]
  (we drop the factor $(dz)^2$ for brevity). 
  The $i = j$ terms drop out, because $A_i$ is nilpotent and thus $A_i^2
  = 0$. Thus we get
  \[ H(z) = \sum_{i<j} \frac{\tr A_iA_j}{(z - t_i)(z - t_j)}. \]
  Using the identity
  \[ \frac{1}{(z - a)(z - b)} = \frac{1}{a - b} \left(\frac{1}{z - a} - \frac{1}{z - b}\right), \]
  this can be rewritten as
  \[ H(z) = \sum_{i \neq j} \frac{\tr A_iA_j}{(t_i - t_j)(z - t_i)}. \]
  It remains to compute the trace. Using \eqref{Aimat}, we have 
  \[ \tr A_iA_j = p_i p_j \tr \begin{pmatrix} y_i & -y_i^2 \\ 1 & -y_i \end{pmatrix} \begin{pmatrix} y_j & -y_j^2 \\ 1 & -y_j \end{pmatrix} = -p_i p_j (y_i - y_j)^2. \]
  Finally, let's take residues:
  \[ G_i \coloneqq \Res_{t_i} H(z) = \sum_{j \neq i}\frac{p_ip_j (y_i - y_j)^2}{t_j - t_i}. \]
  These functions are called the {\it Garnier Hamiltonians}. By construction, we have 
  $$
  \lbrace G_i,G_j\rbrace=0.
  $$
  on  $T^*\bC^N$, which can also be checked directly.  
  
  Unfortunately, the Garnier Hamiltonians do not quite define an integrable system 
  on $T^*\bC^N$, since they are functionally (in fact, linearly) dependent. 
  Namely, by \eqref{sumai0} and \eqref{Aimat} we have 
  \begin{equation}\label{sums0}
  \sum_i p_i = \sum_i p_i y_i =
  \sum_i p_i y_i^2 = 0,
  \end{equation} 
  so 
  $$
  \sum_i G_i=\sum_i t_iG_i=\sum_i t_i^2G_i=0;
  $$
in fact, there are only $N-3$ independent Hamiltonians among the $G_i$.   
However, by \eqref{sums0}, 
  $(y, p)$ belongs to $\mu^{-1}(0) \subset
  T^*\bC^N$, where $\mu\colon T^*\bC^N\to \mathfrak{sl}_2^*$ 
  is the moment map for the $\PGL_2$-action. 
Thus $\lbrace G_i\rbrace$ define $N-3$ independent Poisson-commuting
Hamiltonians on the Hamiltonian reduction $\mathcal
M_N\coloneqq\mu^{-1}(0)/\PGL_2$, which has dimension $2(N-3)$. Thus we
obtained an integrable system on this variety, called the {\it Garnier
  system}.

\begin{problem}\label{garn}
  Find the spectral curve of the Garnier system and compute its genus. Compute the genus explicitly for $N = 4$. (Hint: let $a, b$ be coprime polynomials of $x$ of
  degrees $n_a$ and $n_b$, and with simple roots. What is the genus of
  the normalization of the affine curve $y^2 = a(z)/b(z)$?)
\end{problem}

\subsection{Twisted classical Hitchin systems}

It turns out that another added benefit of Hitchin systems for bundles with parabolic
structures is that they have a twisted generalization, which allows us to produce
more general integrable systems depending on many parameters. To introduce them, we first need to define
a more general version of Hamiltonian reduction called 
{\it Hamiltonian reduction along an orbit} (see e.g. \cite{E2}, Subsection 1.4). 

Let $M$ be a symplectic variety,
with Hamiltonian action by a group $H$. Let $$\mu\colon M \to
\lie{h}^*$$ be a moment map. Previously, we considered the Hamiltonian reduction $\mu^{-1}(0)/H$,
but more generally, we may consider
\[ \mu^{-1}(O)/H \]
for any {\it coadjoint $H$-orbit} $O \subset \lie{h}^*$. 
If the $H$-action is sufficiently nice, this quotient also has a
canonical symplectic structure, and we can run the same construction
of integrable systems as before: if $F_i$ are $H$-invariant functions
in involution on $M$, then they descend to functions $\overline F_i$ on
$\mu^{-1}(O)/H$, which are also in involution.

In particular, returning to our setting, recall that the group
$G(R_{t_1, \ldots, t_N}) \times \prod_i G(\cO_{t_i})$
acts on $\prod_i G(K_{t_i})$, and let ${\mathbf K}$ denote the
kernel of the evaluation map $\prod_i G(\cO_{t_i}) \to G^N$ at $(t_1,
\ldots, t_N)$. We reduce first by ${\mathbf K}$, after which there is a
residual action of $G^N$, and then for a coadjoint orbit $O \subset
(\lie{g}^*)^N$, we can descend the Hitchin Hamiltonians to
$\mu^{-1}(O)/G^N$. The result is called a {\it twisted Hitchin system}. In particular, the case of parabolic structures arises from specific
choices of $O$ (so-called {\it Richardson nilpotent orbits}).

\subsection{Twisted Garnier system} \label{tgar}
As before, points in $\mu^{-1}(O)/G^N$ are Higgs pairs $(E, \phi)$
where $\phi$ must satisfy some conditions. To illustrate, take $G =
\PGL_2$. At $t_i$, take the coadjoint orbit $O_i$ in $\lie{sl}_2^* \cong
\lie{sl}_2$ of regular elements with eigenvalues $\pm \lambda_i$, and let $O\coloneqq\prod_i O_i$. Then generically $E$ is a trivial bundle with parabolic structures 
$\ell_i\in \mathbb P^1$ at $t_i$, and $\phi$ has
simple poles at each $t_i$ with
\[ \Res_{t_i} \phi|_{\ell_i} = \lambda_i \cdot \id \]
(The usual story with parabolic structures
corresponds to the case $\lambda_i=0$.)

As before, writing the Higgs field as 
$$
\phi = \sum_i \frac{A_i}{z - t_i} \, dz,
$$ 
we get
\[ A_i \begin{pmatrix} y_i \\ 1 \end{pmatrix} = \lambda_i \begin{pmatrix} y_i \\ 1 \end{pmatrix}. \]
Solving this equation, we obtain
\[ A_i = \begin{pmatrix} -\lambda_i + p_iy_i & 2\lambda_i y_i - p_iy_i^2 \\ p_i & \lambda_i - p_iy_i \end{pmatrix}. \]
The trace of $A_iA_j$ then becomes
\[ \tr(A_iA_j) = -(y_i - y_j)^2 p_i p_j + 2(\lambda_i p_j - \lambda_j p_i)(y_i - y_j) + 2\lambda_i\lambda_j. \]
The resulting Hamiltonians
\[ G_i(\lambda_1, \ldots, \lambda_N) = \sum_{j \neq i} \frac{(y_i-y_j)^2 p_ip_j - 2(\lambda_i p_j - \lambda_j p_i)(y_i - y_j) - 2\lambda_i\lambda_j}{t_j - t_i} \]
define the {\it deformed} or {\it twisted} Garnier system. The
ordinary Garnier system is a specialization of this, when $\lambda_i
=0$.

 \subsection{Classical elliptic Calogero-Moser system} \label{ellcm}
Another known integrable system which is a special case of the (twisted) Hitchin system is the {\it elliptic Calogero-Moser system}. Let us explain how it arises in this way. 

  Let $X$ be an elliptic curve with zero denoted $0 \in X$, and
  consider a generic vector bundle of degree $0$ and rank $n$ on $X$. Line bundles
  of degree $0$ on $X$ are all of the form
  \[ L_q = \cO(q) \otimes \cO(0)^{-1} \]
  for a point $q \in X$, with a meromorphic section given by
  $\frac{\theta(z-q)}{\theta(z)}$, where $\theta(z)$ is the Jacobi theta-function of $X$. Atiyah (\cite{At}) showed that a generic rank $n$
vector bundle of degree zero on $X$ has the form
  \[ E = L_{q_1} \oplus \cdots \oplus L_{q_n}, \]
  say with $q_i \neq q_j$. Consider $G = \GL_n$, put one puncture at
  $0$, and perform the twisted reduction procedure for the orbit
  $O_c=\langle cT\rangle\subset \lie{sl}_n^*\cong \lie{sl}_n$ at this puncture, where
  $$
  T\coloneqq 1-nE_{nn}=
  \diag(1, \ldots, 1, 1-n) . 
  $$
  As $c \to 0$, this orbit degenerates into the closure of the orbit $O_0$ 
  of a rank $1$ nilpotent
  matrix, corresponding to the parabolic subgroup $P$ with blocks of
  size $(n-1) \times (n-1)$ and $1 \times 1$, i.e. $G/P = \bP^{n-1}$.
 
 So we will consider the twisted Hitchin system for $G=\PGL_n$ 
 with a $P$-structure at $0$. Note that the group $\Aut(E) = (\bC^\times)^{n-1}$ acts on $\bP^{n-1}$ with an open orbit generated by the vector $(1, 1, \ldots, 1)$, so generically we may assume that the $P$-structure at $0$ is defined by this vector. 
  
The
  components $\phi_{ij}$ of the Higgs field $\phi$ are sections of $L_{q_i} \otimes
  L_{q_j}^{-1}$ with at most a first-order pole at $z=0$. Hence, for $i \neq j$ 
  \[ \phi_{ij} = a_{ij} \frac{\theta'(0)\theta(z + q_j - q_i)}{\theta(z) \theta(q_j - q_i)}, \]
  $a_{ij}\in \mathbb C$, and $\phi_{ii} = p_i$ are the momenta, i.e. the symplectic form is 
  $\omega=\sum_j dp_j\wedge dq_j$. 
  
 What is the condition for the matrix $A = (a_{ij})$? Since we are considering the twisted Hitchin system corresponding to the orbit $O_c$, $A$  should act on the vector $(1, \ldots, 1)$ with eigenvalue
  $(1-n)c$:
  \[ A \begin{pmatrix} 1 \\ 1 \\ \vdots \\ 1 \end{pmatrix} = (1-n) c \begin{pmatrix} 1 \\ 1 \\ \vdots \\ 1 \end{pmatrix} \]
  and be conjugate to $cT$. This means that all
  off-diagonal entries $a_{ij}$ are equal to some constant $c$, so
  \[ \phi_{ij} = c\frac{\theta'(0)\theta(z + q_j - q_i)}{\theta(z) \theta(q_j - q_i)}, \ i\ne j;\ \phi_{ii}=p_i. \]
  This $\phi$ is (up to conjugation) {\it Krichever's Lax matrix} for the {\it elliptic
    Calogero--Moser (CM) system}, \cite{KrCM}. 
    
  Thus the generating function for the quadratic Hitchin Hamiltonians is  
    \[ \tr \phi^2 = \sum_i p_i^2 - c^2\sum_{j \neq i} \frac{\theta'(0)^2\theta(z - q_i + q_j) \theta(z - q_j + q_i)}{\theta(z)^2 \theta(q_i - q_j)^2}. \]
  The second summand on the right hand side is $c^2(\wp(z) - \wp(q_i -
  q_j))$, where $\wp(z)$ is the Weierstrass function, since both
  functions have the same zeros and poles and the same coefficient of $\frac{1}{z^2}$. 
  Thus the constant term of $\tr\phi^2$ has the form
  \begin{equation}\label{h2cm}
    \begin{aligned}
      H_2\coloneqq \text{constant term of }\tr\phi^2
      &=\tr\phi^2-n(n-1)c^2\wp(z) \\
      &=\sum_i p_i^2 - c^2 \sum_{i \neq j} \wp(q_i - q_j).
    \end{aligned}
   \end{equation}
  This is exactly the {\it quadratic Hamiltonian in the elliptic CM system.}\footnote{Note that $c$ is not an essential parameter here since it can be rescaled by renormalizing time. Thus physically there are just two distinct cases, corresponding to positive or negative $c^2$ (attracting vs. repelling potential).}
   The constant terms of the traces $\tr \phi^k$, $1\le k\le n$ then yield the first integrals of the elliptic CM system. They have the form 
  $$
  H_k=\sum_i p_i^k+\text{lower order terms with respect to $p_i$};
  $$ 
  for example, $H_1=\sum_i p_i$ and $H_2$ is given by \eqref{h2cm}. 
  Thus $H_k$ are independent (as so are their leading terms, the power sum functions $\sum_i p_i^k$), which 
  establishes complete integrability of the elliptic CM system.

\begin{exercise} 
  Show that for $n = 4$ the Garnier system is equivalent to the
  elliptic CM flow for $2$ particles. What is a geometric reason for
  it? Solve the Hamilton equation of the flow.
\end{exercise}

\begin{exercise}\label{cubic}
  Calculate the first integral $H_3$ for the elliptic CM system for $c=1$.
Namely, show that it has the form 
$$
H_3=\sum_{i=1}^{n} p_i^3+\sum_{i=1}^{n} a_i(q_1,\ldots,q_{n})p_i+b(q_1,\ldots,q_{n})
$$  
and compute the functions $a_i$ and $b$. 
\end{exercise}

\begin{exercise}
  Let $L = \partial_z^2 - 2\sum_{i=1}^{n} \wp(z-q_i)$ and
    assume the $q_1, \ldots, q_n$ are distinct.
  \begin{enumerate}[label=(\roman*)]
  \item Show that $L$ commutes with an odd-order
    differential operator if and only if $(q_1, \ldots, q_n)$ is a
    critical point of the Calogero--Moser potential $\sum_{1\le i<j \le n}
    \wp(q_i - q_j)$.
  \item Show that this differential operator can be
    chosen to be third-order if and only if, in addition, $\sum_{j\neq i}
    \wp(q_i - q_j)$ is independent of $i$. In this case, provide a
    complete classification of solutions using the ODE for the
    potential.
  \end{enumerate}
  A good reference for this exercise is \cite{GW1}.
\end{exercise}

\section{Quantization of Hitchin systems} 

\subsection{Quantization of symplectic manifolds}

What is a quantum integrable system, and what does it mean to quantize
a classical integrable system? This is not even the most basic
question, since classical integrable systems live on symplectic
manifolds, so we should first say what it means to quantize a symplectic
manifold. This story can be told for smooth manifolds, complex analytic
manifolds, or algebraic varieties over any field.

Let $M$ be a symplectic variety. We will pretend that $M$ is affine
so that there is no need to think about sheaves. Then the algebra $\cO(M)$ of regular functions on $M$ is a {\it Poisson algebra}, meaning that there is a Lie bracket 
$\{-, -\}$ on $\cO(M)$ which is a derivation in the first (hence second) argument. 
Such Lie bracket is called a {\it Poisson bracket}. In classical
mechanics, $M$ is the phase space, and $\cO(M)$ is the algebra of
observables. Quantization means that observables are replaced by
operators which may no longer commute. More precisely, we make the following definition. 

\begin{definition}
  A {\it quantization} of $\cO(M)$ over $\mathbf k[\hbar]$ or $\mathbf k[[\hbar]]$ 
  is an associative algebra $(A,*)$ over this ring
   equipped with an algebra isomorphism 
  $\kappa\colon A/\hbar A \cong \cO(M)$ so that $A$ is a flat deformation of
  $A/\hbar A$ and 
  \[ \lim_{\hbar \to 0} \frac{f * g - g * f}{\hbar} = \{f, g\}. \]
\end{definition}

Here ``flat deformation'' means that $A\cong \cO(M)[\hbar]$ as a 
$\mathbf k[\hbar]$-module (respectively  $A\cong \cO(M)[[\hbar]]$ as a 
$\mathbf k[[\hbar]]$-module) compatibly with $\kappa$. 

Recall that if $A=\cup_{i\ge 0}F_iA$ is a $\mathbb Z_+$-filtered $\mathbf k$-algebra 
then the associated graded algebra $\gr(A)$ is 
$$
\gr(A)=\bigoplus_{i\ge 0} \gr(A)_i,\quad \gr(A)_i\coloneqq F_iA/F_{i-1}A.
$$
For $a\in F_iA$, let $p_i(a)$ be its image in $\gr(A)_i$. 
If $\gr(A)$ is commutative then it carries a Poisson bracket of degree $-1$ 
such that if $a\in F_iA$ and $b\in F_jA$ then 
$$
\lbrace p_i(a),p_j(b)\rbrace=p_{i+j-1}([a,b]).
$$
In this case $A$ is said to be a {\it filtered quantization} of 
$(\gr(A),\lbrace,\rbrace)$. 

Recall that the {\it Rees algebra} of $A$ is 
$$
{\rm Rees}(A)=\sum_{i\ge 0}\hbar^i F_i(A)\subset A[\hbar].
$$
It is easy to see that ${\rm Rees}(A)$ is a free $\mathbf k[\hbar]$-module, 
${\rm Rees}(A)/\hbar {\rm Rees}(A)=\gr(A)$, ${\rm Rees}(A)/(\hbar-\hbar_0){\rm Rees}(A)\cong A$ for any $\hbar_0\in \mathbf k^\times$, and that if 
$\gr(A)$ is commutative then ${\rm Rees}(A)$ is a quantization of 
$(\gr(A),\ \lbrace, \rbrace)$ over $\mathbf k[\hbar]$. 

\begin{example}
  Suppose $M = T^*Y$ for a smooth affine variety $Y$ over $\mathbf k$ (${\rm char}(\mathbf k)=0$).  Let $\cD(Y)$   be the algebra of differential operators on $Y$, i.e., operators on $\cO(Y)$ which in local coordinates look like   
  \begin{equation}\label{dop}
  D=\sum_\alpha f_\alpha(x)\partial^\alpha,
  \end{equation} where $\alpha=(\alpha_1,\ldots,\alpha_n)$, $\alpha_i\in \mathbb N$ is a multi-index and 
  $\partial^\alpha\coloneqq\prod_i \partial_i^{\alpha_i}$. We have a filtration $\cD(Y)=\cup_{N\ge 0}\cD_{\le N}(Y)$, where $\cD_{\le N}(Y)$ is the space of operators of order $\le N$, i.e., those given by 
 \eqref{dop} such that all terms have $\sum_i \alpha_i\le N$. 
  Then $\cD(Y)$ is a filtered quantization of $\cO(T^*Y)$ with filtration by order, so 
  the algebra of {\it quasiclassical differential operators}
  $$
  \cD_\hbar(Y)\coloneqq{\rm Rees}(\cD(Y))
  $$ 
  is a quantization of $T^*Y$ over $\mathbf k[\hbar]$. 
  
  It is easy to see that 
  $\cD_\hbar(Y)$ is generated  
 over $\mathbf k[\hbar]$ by $\cO(Y)$ and elements $\hbar v$ where $v$ is a vector field on $Y$. For instance, if $Y=\mathbf k^n$ then 
 $\cD_\hbar(Y)$ is generated over $\mathbf k[\hbar]$ by $x_i$ and $\widehat p_i \coloneqq \hbar \di_i$, with defining relations the {\it Heisenberg uncertainty relations}
  \[ [\widehat p_i, x_j] = \hbar \delta_{ij}, \quad [\widehat p_i, \widehat p_j] = [x_i, x_j] = 0.\]
\end{example} 

\begin{exercise} \label{doex}
  \begin{enumerate}[label=(\roman*)]
  \item (Grothendieck's definition of differential operators) Show
    that an operator $D\colon \cO(Y)\to \cO(Y)$ is a differential
    operator of order $\le N$ if and only if for any
    $f_1,\ldots,f_{N+1}\in \cO(Y)$ one has
    $$
    \ad(f_1)\cdots\ad(f_{N+1})(D)=0,
    $$
    where $f_i$ denotes the operator of multiplying by $f_i$.
  \item Let $Y$ be a smooth affine algebraic variety over $\mathbb C$.
    Show that the algebra of differential operators $\cD(Y)$ on $Y$ is
    generated by $\cO(Y)$ and elements $\nabla_v$ attached
    $\cO(Y)$-linearly to vector fields $v$ on $Y$ with defining
    relations
    $$
    [\nabla_v,f]=v(f),\, f\in \cO(Y); \quad [\nabla_v,\nabla_u]=\nabla_{[v,u]},
    $$ and the filtration by order is defined by setting $\deg
    \cO(Y)=0$ and $\deg(\nabla_v)=1$.
  \end{enumerate}
\end{exercise} 

\subsection{Quantization of classical integrable systems}
Let $n \coloneqq \dim Y$. Recall that a classical
integrable system on $T^*Y$ consists of functionally (or, in the algebraic case,
algebraically) independent functions $H_1, \ldots, H_n$ on a dense open
subset of $T^*Y$ such
that $\{H_i, H_j\} = 0$. These functions define a map
\[ p\colon T^*Y \to \bA^n \]
whose pullback $p^*\colon \cO(\bA^n) = \mathbf k[T_1, \ldots, T_n] \to
\cO(T^*Y)$ sends $T_i$ to $H_i$ and is therefore an inclusion of a 
Poisson-commutative subalgebra $\mathbf k[H_1, \ldots, H_n] \hookrightarrow T^*Y$.
Then it is easy to prove the following lemma. 

\begin{lemma}\label{claint} Any regular function on 
a dense open set of $T^*Y$ which Poisson-commutes with all $H_i$ is
 functionally (respectively, algebraically) dependent on them.
 \end{lemma}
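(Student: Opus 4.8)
The plan is to reduce the claim to a pointwise statement about differentials, which then follows from elementary symplectic linear algebra. Recall that $\dim T^*Y = 2n$. Write $G_0 \coloneqq F$ and $G_i \coloneqq H_i$ for $1 \le i \le n$, and let $U \subseteq T^*Y$ be the dense open subset on which $F$ and all the $H_i$ are regular. First I would argue that it suffices to show that the $(n+1)$ differentials $dG_0, \dots, dG_n$ are linearly dependent at every point of $U$, equivalently that the $(n+1)$-form $dF \wedge dH_1 \wedge \cdots \wedge dH_n$ vanishes identically on $U$. In the $C^\infty$ or complex-analytic setting this is exactly the definition of functional dependence. In the algebraic setting over a field $\mathbf k$ of characteristic $0$ it is equivalent to algebraic dependence: by the Jacobian criterion, $G_0, \dots, G_n \in \mathbf k(T^*Y)$ are algebraically dependent over $\mathbf k$ if and only if $dG_0, \dots, dG_n$ are linearly dependent over the function field, which holds if and only if $dG_0 \wedge \cdots \wedge dG_n$ vanishes identically.

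Next I would fix a point $m \in U$ and pass to the symplectic vector space $V \coloneqq T_m(T^*Y)$, equipped with the symplectic form $\omega$ induced by the canonical symplectic structure on $T^*Y$. Since $\omega$ is nondegenerate, the map $\omega^\flat \colon V \to V^*$, $v \mapsto \iota_v \omega$, is an isomorphism, and by the definition of the Hamiltonian vector field it sends $(X_G)_m$ to $(dG)_m$ (regardless of the sign convention in $\iota_{X_G}\omega = \pm\, dG$). Writing $W_m \coloneqq \langle (X_{G_0})_m, \dots, (X_{G_n})_m \rangle \subseteq V$, we therefore get
\[
  \dim \langle (dG_0)_m, \dots, (dG_n)_m \rangle \;=\; \dim W_m .
\]
The hypotheses $\{H_i, H_j\} = 0$ and $\{F, H_i\} = 0$, together with $\{F,F\} = \{H_i, H_i\} = 0$, give $\omega\big( (X_{G_a})_m, (X_{G_b})_m \big) = \pm\, \{G_a, G_b\}(m) = 0$ for all $a, b \in \{0, 1, \dots, n\}$, so $W_m$ is an isotropic subspace of $V$. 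A $2n$-dimensional symplectic vector space has no isotropic subspace of dimension $> n$ (if $W_m$ is isotropic then $W_m \subseteq W_m^{\perp}$, while $\dim W_m + \dim W_m^{\perp} = 2n$), hence $\dim W_m \le n < n+1$. Thus $dG_0, \dots, dG_n$ are linearly dependent at $m$; as $m \in U$ was arbitrary, $dF \wedge dH_1 \wedge \cdots \wedge dH_n \equiv 0$ on $U$, which by the first paragraph gives the lemma.

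I do not expect a serious obstacle; the only points that need care are (i) the characteristic-$0$ equivalence, via the Jacobian criterion, between vanishing of the differential form and genuine algebraic dependence, and (ii) the observation --- which is really the whole content --- that Poisson-commutativity of $F$ with all the $H_i$ forces the span of $X_F, X_{H_1}, \dots, X_{H_n}$ to be isotropic, hence of dimension at most $n$, in the $2n$-dimensional fiber. One can repackage the same argument geometrically: on a dense open set $p = (H_1, \dots, H_n)$ is a smooth fibration of relative dimension $n$, the pairwise-commuting fields $X_{H_i}$ span its relative tangent bundle, and $F$, being annihilated by all of them, is constant along the connected components of the fibers of $p$; a generic-finiteness argument applied to $(p, F)$ then yields a polynomial relation $P(F, H_1, \dots, H_n) = 0$. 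This variant also makes transparent why one obtains only algebraic, rather than polynomial, dependence, since the fibers of $p$ may be reducible.
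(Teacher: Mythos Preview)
Your argument is correct and is the standard one. The paper itself does not supply a proof: it merely says ``it is easy to prove the following lemma'' and moves on. Your reduction to the pointwise statement that the Hamiltonian vector fields $X_F, X_{H_1}, \dots, X_{H_n}$ span an isotropic (hence at most $n$-dimensional) subspace of each tangent space, followed by the Jacobian criterion in characteristic~$0$, is exactly the kind of verification the authors had in mind.
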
 
 
Thus if $Y$ is a smooth algebraic variety, then classical integrable
systems on $T^*Y$ correspond to Poisson-commutative polynomial subalgebras in
$\cO(T^*Y)$ with $\dim Y$ generators. This motivates the
following definition.

\begin{definition}
  A {\it quantization} of a classical integrable system $\lbrace H_1,\ldots,H_n\rbrace $ on $Y$ (also called a {\it quasiclassical quantum integrable system} quantizing $\lbrace H_1,\ldots,H_n\rbrace $) is a
  non-commutative algebra $A$ quantizing $\cO(T^*Y)$, with an
  injection
  \[ \mathbf k[T_1, \ldots, T_n] \hookrightarrow A, \qquad T_i \mapsto \widehat H_i \]
  such that $\widehat H_i$ maps to $H_i$ in $A/\hbar A=\cO(T^*Y)$. 
\end{definition}

In particular, if $A=\cD_\hbar(Y)$, then we can specialize $\hbar$ to a nonzero numerical value and obtain a commuting system of operators $\widehat H_1,\ldots,\widehat H_n$ in $\cD(Y)$. In general, such a system is called 
a {\it quantum integrable system} in $\cD(Y)$, provided that these operators 
are algebraically independent. We thus see that 
any quantization of a classical integrable system on $T^*Y$ in $\cD_\hbar(Y)$ 
specializes to a family of quantum integrable systems in $\cD(Y)$ 
parametrized by (generic) $\hbar\in \mathbf k^\times$. 

\begin{example} Let $Y=\mathbb A^1$, and $H_1=H=p^2+U(x)$. 
This system can be quantized by setting $\widehat H=\hbar^2 \partial^2+U(x)$. 
\end{example} 

\begin{remark}\label{qis} Note that a {\it single} quantum integrable system in $\cD(Y)$ need not have a classical limit, i.e., need not be a specialization of a quasiclassical integrable system (=a member of a 1-parameter family of integrable systems as above). A vivid example of this is the deformed rational Calogero-Moser system, see e.g. \cite{FV}. 
\end{remark} 

The following non-trivial theorem is a quantum analog of Lemma \ref{claint}. 

\begin{theorem}[Makar--Limanov, \cite{ML}]
Let $B_1,\ldots,B_n$ be a commuting algebraically independent family of elements 
of $\cD(Y)$. If $B\in \cD(Y)$ and $[B, B_i] = 0$ for all $i$ then $B$ is algebraically
  dependent on $B_1, \ldots, B_n$.
\end{theorem}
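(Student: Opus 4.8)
The plan is to reduce the statement to the assertion that \emph{every commutative subalgebra of $\cD(Y)$ has transcendence degree at most $n=\dim Y$} — the exact $\cD(Y)$-analogue of Lemma \ref{claint}. Granting this, suppose $B$ were algebraically independent of $B_1,\dots,B_n$. Since $[B_i,B_j]=0$ and $[B,B_i]=0$, all of $B,B_1,\dots,B_n$ pairwise commute, so $A\coloneqq\bC[B,B_1,\dots,B_n]\subseteq\cD(Y)$ is a commutative subalgebra, and it has transcendence degree $n+1$ (its $n+1$ generators being algebraically independent). This contradicts the boxed assertion, so $B$ must be algebraically dependent on $B_1,\dots,B_n$.

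To prove the boxed assertion I would pass to symbols. Equip a commutative $A\subseteq\cD(Y)$ with the filtration induced by the order filtration of $\cD(Y)$; then $\operatorname{gr}A$ is a subalgebra of $\operatorname{gr}\cD(Y)=\cO(T^*Y)$. It is commutative, and since $[a,b]=0$ for all $a,b\in A$, the symbol of $[a,b]$ vanishes, which by the definition of the degree $(-1)$ Poisson bracket on $\operatorname{gr}\cD(Y)$ (the identity $\{p_i(a),p_j(b)\}=p_{i+j-1}([a,b])$ recalled earlier) says precisely that $\operatorname{gr}A$ is \emph{Poisson-commutative} in $\cO(T^*Y)$. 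Now $T^*Y$ is a smooth symplectic variety of dimension $2n$, and a Poisson-commutative subalgebra $C\subseteq\cO(T^*Y)$ has $\operatorname{trdeg}C\le n$: choosing algebraically independent $f_1,\dots,f_k\in C$, the map $(f_1,\dots,f_k)\colon T^*Y\to\bA^k$ is dominant, so on a dense open set the differentials $df_1,\dots,df_k$ are linearly independent; the corresponding Hamiltonian vector fields are then linearly independent and pairwise $\omega$-orthogonal, because $\omega(X_{f_i},X_{f_j})=\{f_i,f_j\}=0$, hence span an isotropic subspace of a $2n$-dimensional symplectic vector space, forcing $k\le n$.

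It then remains to compare $\operatorname{trdeg}A$ with $\operatorname{trdeg}\operatorname{gr}A$. One direction is elementary: if $\sigma(a_1),\dots,\sigma(a_k)\in\operatorname{gr}A$ are algebraically independent then so are $a_1,\dots,a_k$, as one sees by extracting the top-order symbol of a hypothetical polynomial relation and using that $\cO(T^*Y)$ is a domain, so leading symbols never cancel; thus $\operatorname{trdeg}A\ge\operatorname{trdeg}\operatorname{gr}A$. The reverse inequality $\operatorname{trdeg}A\le\operatorname{trdeg}\operatorname{gr}A$ is where one must invoke the comparison of Gelfand--Kirillov dimensions of a filtered algebra and its associated graded; as $A$ is commutative, $\operatorname{GKdim}A=\operatorname{trdeg}A$ and similarly for $\operatorname{gr}A$, so this is a growth estimate. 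Combining both inequalities with the isotropic bound yields $\operatorname{trdeg}A\le n$, as required.

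\textbf{Main obstacle.} The delicate step is precisely $\operatorname{trdeg}A\le\operatorname{trdeg}\operatorname{gr}A$: the order filtration on $\cD(Y)$ has infinite-dimensional layers (already $\cD_{\le 0}(Y)=\cO(Y)$), so the textbook ``$\operatorname{GKdim}$ is unchanged on passing to the associated graded'' does not apply verbatim. The clean fix is to work instead with a finite-dimensional \emph{Bernstein-type} filtration: pick finitely many algebra generators $f_1,\dots,f_r$ of $\cO(Y)$ and finitely many derivations $\di_1,\dots,\di_s$ generating $\Vect(Y)$ as an $\cO(Y)$-module — they generate $\cD(Y)$ by Exercise \ref{doex}(ii) — and filter $\cD(Y)$ by word length in $\{1,f_i,\di_j\}$. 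Its associated graded is a finitely generated commutative Poisson algebra which is generically symplectic of dimension $2n$; the comparison $\operatorname{GKdim}A=\operatorname{GKdim}\operatorname{gr}A$ is now legitimate, and the isotropic-subspace bound applies to $\operatorname{gr}A$ inside it. (This circle of ideas is essentially the Bernstein inequality / Gabber involutivity of characteristic varieties, which could also be quoted directly.) Everything else — the Poisson bracket computation on $\operatorname{gr}\cD(Y)$, the dominance and generic-smoothness input, and the isotropic bound — is routine.
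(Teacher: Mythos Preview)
The paper does not give a proof of this theorem; it is stated with a citation to \cite{ML} and immediately followed by the remark ``Thus a quantum integrable system is a maximal (up to algebraic extensions) commutative subalgebra in $\cD(Y)$''. So there is nothing in the paper to compare your argument against, and what you have written is a genuine proof sketch rather than a reconstruction.

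Your overall strategy is the standard one and is correct: reduce to the statement that every commutative subalgebra of $\cD(Y)$ has transcendence degree at most $n$, pass to principal symbols to land in a Poisson-commutative subalgebra of $\cO(T^*Y)$, and then use the isotropic bound. You are also right that the real content is the inequality $\operatorname{trdeg} A \le \operatorname{trdeg}\gr A$, and that the order filtration alone does not give it because its layers are infinite-dimensional.

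Where you should be more careful is the proposed fix. For the Weyl algebra $A_n$ the Bernstein (total-degree) filtration works exactly as you describe: $\gr_B A_n=\bC[x,\xi]$ is a commutative symplectic polynomial ring, the GKdim comparison is legitimate since the layers are finite-dimensional, and the isotropic bound finishes the argument. But for a general smooth affine $Y$, the word-length filtration on $\cD(Y)$ in generators $f_1,\dots,f_r,\partial_1,\dots,\partial_s$ need \emph{not} have commutative associated graded. For example, take $Y=\bA^1\setminus 0$ with generators $x,\,y\coloneqq x^{-1},\,\partial$: then $[\partial,y]=-y^2$ lies in $F_2$ but not in $F_1$, so in $\gr_B\cD(Y)$ one has $\bar\partial\,\bar y-\bar y\,\bar\partial=-\bar y^2\neq 0$. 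Even when a more careful choice of generators makes $\gr_B\cD(Y)$ commutative, it will typically fail to be a domain (in the same example, $xy=1$ forces $\bar x\,\bar y=0$), and you then have to argue that it is still ``generically symplectic of dimension $2n$'' rather than something larger --- this is not automatic. The clean route for general $Y$ is the one you mention parenthetically: either reduce to $A_n$ by restricting to a dense open admitting an \'etale map to $\bA^n$, or invoke Gabber's integrability/Bernstein inequality directly. In short: your outline is sound, your diagnosis of the obstacle is correct, but the specific Bernstein-filtration repair needs to be stated and justified with more care outside the Weyl-algebra case.
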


Thus a quantum integrable system is a maximal (up to algebraic extensions) commutative subalgebra in $\cD(Y)$, and a quasiclassical quantum integrable system is one in $\cD_\hbar(Y)$. This quasiclassical system quantizes a given classical system if it converges
to it when $\hbar \to 0$ and $\widehat p_i \mapsto p_i$.

This gives rise to a naive quantization procedure: just replace all
instances of $p_j$ with $\hbar \di_j$.\footnote{In actual quantum mechanics one replaces $p_j$ with $-i\hbar \partial_j$, but since our considerations 
are purely algebraic, we drop the factor $-i$.} But this is not a good thing to do
in general due to ordering issues: unlike $p_i$, the partial
derivatives $\di_i$ do not commute with coordinates, so there is
ambiguity as to whether, say, $x_i p_i$ should be replaced by $\hbar
x_i \di_i$ or $\hbar \di_i x_i = \hbar x_i \di_i + \hbar$. However this
naive procedure does sometimes work, e.g. for the twisted Garnier system from
Subsection \ref{tgar}.

\begin{example}
  Recall the twisted Garnier system, given by the Hamiltonians
  \[ G_i = \sum_{j \neq i} \frac{(x_i - x_j)^2  p_i p_j - 2(x_i - x_j)(\lambda_i p_j - \lambda_j p_i) - 2\lambda_i\lambda_j}{t_j - t_i} \]
  where $x_i$ and $p_j$ are the standard coordinates and momenta. The
  naive quantization procedure produces
  \[\widehat G_i \coloneqq \hbar^2\sum_{j \neq i} \frac{(x_i - x_j)^2 \di_i \di_j - 2(x_i - x_j)(\frac{\lambda_i}{\hbar} \di_j - \frac{\lambda_j}{\hbar} \di_i) - 2\frac{\lambda_i}{\hbar} \frac{\lambda_j}{\hbar}}{t_j - t_i}. \]
  It is convenient to introduce $\Lambda_i \coloneqq
  2\lambda_i/\hbar$, so that this can be rewritten as
  \[ \widehat G_i=\hbar^2\sum_{j \neq i} \frac{(x_i - x_j)^2 \di_i \di_j - (x_i - x_j)(\Lambda_i \di_j - \Lambda_j  \di_i) - \frac{1}{2}\Lambda_i\Lambda_j}{t_j - t_i}. \]
\end{example}
  
  There is actually a more insightful way to write
  this formula using Lie theory, which in particular allows us to see without computation that 
  the operators $\widehat G_i$ commute. Namely, let $\lie{sl}_2 = \inner{e, f, h}$, and recall that there is
  an action of $U(\lie{sl}_2)$ by differential operators on $\bA^1$
  given by
 \begin{equation}\label{twidi}
 f \mapsto -\di_x, \qquad h \mapsto 2x \di_x + \Lambda, \qquad e \mapsto x^2 \di_x + \Lambda x. 
 \end{equation}
  The Casimir tensor is the unique element $\Omega \in (\lie{sl}_2
  \otimes \lie{sl}_2)^{\lie{sl}_2}$ up to scaling, and is given by the
  formula
  \[ \Omega = e \otimes f + f \otimes e + \frac{1}{2} h \otimes h. \]
  Then we have
  \[ \widehat G_i = \hbar^{2}\sum_{j \neq i} \frac{\Omega_{i,j}}{t_i - t_j}. \]
  Now note that this formula in fact makes sense for any 
  simple finite dimensional Lie algebra $\lie{g}$, with 
  $\Omega$ being the Casimir tensor (unique up to scaling nonzero element in 
  $(S^2\lie{g})^{\lie{g}}$. Moreover, it is obvious that 
  $$[\Omega_{12}, \Omega_{13}+\Omega_{23}] = 0,
  $$
  and therefore 
  $$
  [\widehat G_i, \widehat G_j] = 0.
  $$
  Thus we obtain a collection of commuting elements in $U(\lie{g})^{\otimes n}$
  called the {\it Gaudin Hamiltonians} for the Lie algebra $\lie{g}$.\footnote{Usually the factor $\hbar^2$ is dropped.} It is easy to check that these quantize second-order Hitchin Hamiltonians for $\lie{g}$ on $\mathbb P^1$ with parabolic structures.
  
  If we pick representations $V_1, \ldots, V_n$ of $\lie{g}$, then we
  get commuting operators
  \[ \widehat G_i \in \End(V_1 \otimes \cdots \otimes V_n) \]
  which also commute with $\lie{g}$ and therefore act on $(V_1 \otimes
  \cdots \otimes V_n)^{\lie{g}}$ and more generally on $\Hom_{\lie{g}}(V,V_1 \otimes \cdots \otimes V_n)$ for any representation $V$ of $\lie{g}$. This produces
 many interesting families of commuting operators.

  However, for simple Lie algebras $\lie{g}$ of rank $>1$, this does not immediately
  produce an integrable system, because we are missing higher-order
  operators; for example, for $\lie{g}=\mathfrak{sl}_n$ with $n>2$ 
  we are missing the components of $\tr \wedge^j\phi$ with $j>2$. We might 
  hope that our naive quantization procedure could help here, but 
  the following example shows that unfortunately this is not quite the case. 

\begin{example}[Elliptic Calogero--Moser system]
  Recall that the classical elliptic Calogero--Moser system has
  Hamiltonian $H_2 = \sum_i p_i^2 - \sum_{j \neq i} \wp(q_i - q_j)$.
  So naively the quantized Hamiltonian should be
  \[ \widehat H_2 \coloneqq \hbar^2\left(\sum_i \di_i^2 - \frac{1}{\hbar^2} \sum_{j \neq i} \wp(q_i - q_j)\right). \]

\begin{theorem}\label{quantCM} $\widehat H_2$ defines a quasiclassical quantum integrable
  system $\bC[\widehat H_1, \ldots, \widehat H_n]$, which is the
  centralizer of $\widehat H_2$ in the algebra of quasiclassical differential operators
  in $n$ variables.\footnote{The trigonometric or rational quantum Calogero--Moser systems
  can be obtained as limits from the elliptic one.}
\end{theorem}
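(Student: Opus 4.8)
The plan is to reduce everything to the already-established \emph{classical} integrability of the elliptic CM system (end of Subsection~\ref{ellcm}) together with the quantization of the Hitchin system. Concretely, I would: (a) produce commuting operators $\widehat H_1,\widehat H_3,\ldots,\widehat H_n\in\cD_\hbar(Y)$, alongside the given $\widehat H_2$, whose images in $\cD_\hbar(Y)/\hbar\cD_\hbar(Y)=\cO(T^*Y)$ are the classical CM integrals $H_1,\ldots,H_n$ (here $Y$ is the CM configuration space, a dense open subset of $\bA^n$); (b) conclude that $\widehat H_1,\ldots,\widehat H_n$ are algebraically independent, so that $\bC[\hbar][\widehat H_1,\ldots,\widehat H_n]$ is a quasiclassical quantum integrable system; and (c) show this polynomial algebra is the \emph{entire} centralizer of $\widehat H_2$.

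For (a) I would invoke the quantization of the Hitchin system (\cite{BD}; cf.\ the rest of this section). In Subsection~\ref{ellcm} the elliptic CM system was identified with the twisted Hitchin system for $\PGL_n$ on the elliptic curve $X$ with one marked point; by Atiyah's classification the relevant open substack of $\Bun_{\PGL_n}(X)$, together with the twist pinning the parabolic structure to the vector $(1,\ldots,1)$, is $Y$. The Beilinson--Drinfeld construction produces a commutative algebra of (suitably twisted) differential operators on this open substack whose symbols are the classical Hitchin Hamiltonians; transporting it to $Y$ and conjugating by the standard half-form (which absorbs the normalization of the $\wp$-potential) yields the desired $\widehat H_1,\ldots,\widehat H_n$, with $\widehat H_2$ the quadratic one of the statement. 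One could instead cite the explicit quantum Lax-pair construction of Olshanetsky--Perelomov; either way, the substance is that the naive symmetrized quantization must be corrected by $\hbar$-dependent lower-order terms, and it is the existence of these corrections that is being asserted. \emph{This is the hard part}; everything below is comparatively routine.

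Given (a), step (b) is a formal argument: if $P(\widehat H_1,\ldots,\widehat H_n)=0$ for some $P\in\bC[\hbar][T_1,\ldots,T_n]$, reduce modulo $\hbar$ to get $P|_{\hbar=0}(H_1,\ldots,H_n)=0$, hence $P|_{\hbar=0}=0$ since the $H_i$ are algebraically independent (their leading terms in the momenta are the power sums $\sum_j p_j^i$, as noted at the end of Subsection~\ref{ellcm}); thus $\hbar\mid P$, and descending $\hbar$-order together with $\hbar$-torsion-freeness of $\cD_\hbar(Y)$ forces $P=0$. So $\bC[\hbar][\widehat H_1,\ldots,\widehat H_n]$ is a polynomial subalgebra quantizing the classical integrable system $\bC[H_1,\ldots,H_n]\subset\cO(T^*Y)$, i.e.\ a quasiclassical quantum integrable system; specializing $\hbar$ to a nonzero scalar gives a quantum integrable system in $\cD(Y)$, since the order symbols of the $\widehat H_i|_{\hbar=\hbar_0}$ are again the algebraically independent $\sum_j p_j^i$.

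For (c), let $Z$ be the centralizer of $\widehat H_2$ in $\cD_\hbar(Y)$; clearly $Z\supseteq\bC[\hbar][\widehat H_1,\ldots,\widehat H_n]$. I would first establish the classical analogue: the centralizer of $H_2$ among regular functions on a dense open subset of $T^*Y$ equals $\bC[H_1,\ldots,H_n]$. This follows from the Liouville--Arnold picture of the Hitchin fibration $p=(H_1,\ldots,H_n)$ (Theorem~\ref{hisl_n} and the spectral-curve discussion): the generic fibre of $p$ is open in an abelian variety built from the Jacobian of the spectral curve, on which the $H_2$-flow is linear with generically nonzero velocity, so (once the center-of-mass factor, on which $H_1=\sum_j p_j$ acts and in which $H_2$ moves $\sum_j q_j$ at rate $-2H_1$, is split off) the $H_2$-orbit is Zariski-dense in the generic fibre; hence a function Poisson-commuting with $H_2$ is pulled back from the base, i.e.\ lies in $\bC[H_1,\ldots,H_n]$, a polynomial ring and thus integrally closed in its fraction field. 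Now take $B\in Z$: its symbol $\bar B\in\cO(T^*Y)$ Poisson-commutes with $H_2$, so $\bar B=P_0(H_1,\ldots,H_n)$ for a polynomial $P_0$; then $B-P_0(\widehat H_1,\ldots,\widehat H_n)\in Z$ is divisible by $\hbar$, and dividing by $\hbar$ and iterating (the base case being that a zeroth-order element of $\cD_\hbar(Y)$ commuting with $\widehat H_2$ lies in $\bC[\hbar]$) yields $B\in\bC[\hbar][\widehat H_1,\ldots,\widehat H_n]$; one may also note that the Makar--Limanov theorem quoted above already forces $Z$ to be an algebraic extension of this algebra, after which the symbol computation gives equality. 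Specializing $\hbar$ then gives the statement in $\cD(Y)$. Apart from (a), the one point requiring genuine care is the Zariski-density of the $H_2$-flow in the generic Hitchin fibre --- equivalently, the absence of a proper abelian subvariety of the relevant Jacobian/Prym containing the $H_2$-velocity for a generic spectral curve.
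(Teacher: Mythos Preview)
The paper does not actually prove Theorem~\ref{quantCM}: it is stated as a known fact, used to motivate the need for a systematic quantization, and then revisited only in the exercise (cf.~\cite{E3}) asking the reader to realize the quantum CM integrals as a special case of the quantum twisted Hitchin system. Your step~(a) is precisely this exercise, so your overall plan is in line with what the paper intends; and your step~(b) is indeed routine once~(a) is in hand.

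Your step~(c), the centralizer claim, goes beyond anything the paper argues, and there are two soft spots in your sketch. First, you correctly flag the Zariski-density of the $H_2$-flow in the generic fibre as the nontrivial input; note that this is not automatic from Liouville--Arnold (the linear flow on an abelian variety is dense only if its velocity vector avoids every proper abelian subvariety), so you really do need an argument specific to the CM spectral curve or its Prym. Second, even granting density, ``constant on generic fibres'' only gives that $\bar B$ is pulled back along $p$ to a \emph{rational} function on the base; the remark that $\bC[H_1,\ldots,H_n]$ is integrally closed in its fraction field does not by itself force $\bar B$ to be integral over it. A cleaner way to close this gap is to work with the filtration by momentum degree: the top-degree part of any $f$ Poisson-commuting with $H_2$ Poisson-commutes with $\sum_j p_j^2$ (the leading term of $H_2$), hence is a polynomial in $q$ with coefficients symmetric in $p$, and then a descending induction using that the leading terms of $H_1,\ldots,H_n$ generate $\bC[p_1,\ldots,p_n]^{S_n}$ lets you peel off one $H_i$-polynomial at a time. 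Once the classical centralizer statement is nailed down, your $\hbar$-iteration does terminate (by induction on the Rees degree, since subtracting $P_0(\widehat H_1,\ldots,\widehat H_n)$ does not raise degree and dividing by $\hbar$ lowers it), so that part is fine.
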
  

While this is encouraging, Theorem \ref{quantCM} (integrability of the quantum CM system) unfortunately does not follow from the integrability of the classical CM system. For instance, according to Exercise \ref{cubic}, we have a classical integral
  \[ H_3 = \sum_i p_i^3 + \sum_i a_i(\mathbf q)p_i + b(\mathbf q) \]
  for some functions $a_i(\mathbf q)$ and $b(\mathbf q)$. And already in the middle term
  we have the ambiguity in ordering: in the quantized Hamiltonian
  $\widehat H_3$, do we put $\di_i  a_i(q)$ or $a_i(q) \di_i$, or something
  else? Clearly, we need a more systematic approach!
\end{example}

In fact there is no uniform explicit\footnote{Abstractly, one can find quantizations by deformation-theoretic methods, in both the analytic setting \cite{BD:defquant} and the algebraic setting \cite{Ar}, but these methods do not provide explicit formulas.} way to quantize an arbitrary integrable
system. In each specific case one usually needs to go back to the definition of the classical system
and see if one can quantize the way in which it is obtained. This is exactly
what we'll do in the case of Hitchin systems.

\begin{exercise}
  \begin{enumerate}[label=(\roman*)]
  \item For $N = 4$ the Gaudin system for $\lie{sl}_2$ reduces to a
    second order differential operator $L$ in $1$ variable with $4$
    singularities. Compute this operator after sending $(t_1, t_2,
    t_3, t_4) \mapsto (0, 1, \infty, t)$. (Hint: you can get the
    general shape of $L$ by using that it has $4$ regular
    singularities.)

  \item Show that for $\Lambda_i = -1$ one obtains (up to adding a
    constant) the {\it Lam\'e operator} (with parameter
    $-\frac{1}{2}$)
    \[ L = \di \circ x (x -1)(x - t) \circ \di + x. \]
    
  \item Let $E$ be the elliptic curve $y^2 = x(x-1)(x-t)$. Then the
    function $x$ defines a double cover $E\to \mathbb P^1$ branched at
    $(0,1,\infty,t)$. Let us lift the operator $L$ from (i) to this
    double cover. Show that the lift $\widetilde L$ is the {\it
      Darboux operator}
    \[ \widetilde L = \di_z^2 - \sum_{i=0}^3 \frac{\Lambda_i(\Lambda_i+2)}{4} \wp(z + \varepsilon_i; \tau) \]
    for $\varepsilon_1 = 0$, $\varepsilon_2 = \frac{1}{2}$,
    $\varepsilon_3 = \frac{\tau}{2}$, and $\varepsilon_4 =
    \frac{1+\tau}{2}$.
  \end{enumerate}
\end{exercise}

\subsection{Quantum Hamiltonian reduction}\label{qhr}

Recall our construction of the classical Hitchin system on $\Bun_G^\circ(X)$.
There were two steps.
\begin{enumerate}
\item Represent $\Bun_G(X)$ as a double quotient, e.g. $G(R) \backslash G(K) / G(\cO)$.
\item Construct some commuting Hamiltonians on $T^*G(K)$ which are
  invariant under the left and right actions of $G(K)$, and then
  descend them to $\Bun_G^\circ(X)$ using Hamiltonian reduction by $G(R) \times G(\cO)$.
\end{enumerate}
To retrace our steps, we must discuss a quantized version of
Hamiltonian reduction (along a choice of coadjoint orbit).

Classically, a group $H$ acts in a Hamiltonian manner on a symplectic
manifold $M$, with moment map $\mu\colon M \to \lie{h}^*$, and we
define the Hamiltonian reduction $\mu^{-1}(0)/H$ which is a symplectic
manifold. Note that $\mu$ can be viewed as a Poisson homomorphism
between Poisson algebras $\mu\colon S(\lie{h}) = \cO(\lie{h}^*) \to
\cO(M)$. In the quantum setting, we therefore must consider a group
$H$ acting on an algebra $A$, and the natural way to quantize $\mu$ is
to ask for an algebra homomorphism
\[ \mu\colon U(\lie{h}) \to A. \]
This is the input data that one must supply. The classical condition
that the moment map is $H$-equivariant and dual to the action map
becomes the condition that $\mu$ is $H$-equivariant and
\[ z \cdot a = [\mu(z), a], \qquad \forall z \in \lie{h}. \]
Finally, classically, we considered the quotient $M/H$, for which
$$
\cO(M/H) = \cO(M)^H \subset \cO(M)
$$ 
is a Poisson subalgebra, so a natural way to quantize the locus $\mu^{-1}(0)/H
\subset M/H$ cut out by the equation $\mu(m) = 0$ is given by the following definition.

\begin{definition}
  The {\it quantum Hamiltonian reduction} of $A$ by $H$ is the algebra
  \[ \overline{A}\coloneqq A^H / (A \mu(\lie{h}))^H. \]
  Note that $A \mu(\lie{h}) \subset A$ is only a left ideal, but one
  can check that, after taking $H$-invariants, $(A \mu(\lie{h}))^H
  \subset A^H$ is a two-sided ideal.
  \end{definition}

  Note that if $H$ is reductive and acts on $A$ locally finitely, then the operation of quotienting by $A
  \mu(\lie{h})$ commutes with the operation of taking $H$-invariants, 
  so 
  \[ \overline{A}=(A/ A \mu(\lie{h}))^H. 
  \]

  To replace $0$ with a coadjoint orbit $O \subset \lie{h}^*$, the
  equation $\mu(m) = 0$ is replaced by $\mu(m) \in O$. Thus in the
  quantum setting, we need to find a two-sided ideal $I \subset
  U(\lie{h})$ which quantizes the orbit $O$, in the sense that
  $U(\lie{h})/I$ is a quantization of $O$. Then the quantum
  Hamiltonian reduction is
  \[ \overline{A}=A^H / (A \mu(I))^H, \]
  which coincides with $(A / A \mu(I))^H$ in the reductive case. 
  
  When $O = 0$, the ideal $I$ is the augmentation ideal, i.e. the
  kernel of $U(\lie{h}) \to \bC$, so that $A \mu(I) = A \mu(\lie{h})$
  and we recover the previous case. 
  
  More generally, given a parabolic 
  subalgebra $\lie{p}\subset \lie{g}$ and a character $\lambda\colon \lie{p}\to \mathbb C$, 
  we have the two-sided ideal $I_\lambda\subset U(\lie{g})$ which is the annihilator of the parabolic Verma module $U(\lie{g})\otimes_{U(\lie{p})}\lambda$. The quantum Hamiltonian reduction using this ideal corresponds to the classical Hamiltonian reduction along the orbit $O_\lambda$ of $\lambda$ extended arbitrarily to an element of $\lie{g}^*$ (note that all such extensions are conjugate), which occurs in the construction of twisted Hitchin systems. 
 
\subsection{The quantum anomaly}\label{quanosec}
Let us now implement this for the Hitchin system (first without parabolic structures). We
should take the (huge!) algebra $A \coloneqq \cD(G(K))$ of
differential operators on $G(K)$, and the quantum Hitchin system should be
obtained from some $2$-sided-invariant differential operators.

This is an infinite-dimensional group and there is actually quite a
bit of technical trouble in talking about differential operators on
such a group. But here we will ignore such issues and pretend that $L \coloneqq G(K)$ is an ordinary Lie group; a fully rigorous treatment can be found in \cite{BD}.

What are $2$-sided-invariant differential operators on a Lie group $L$?
Left-invariant differential operators are well-known to be identified
with the universal enveloping algebra $U(\lie{l})$ of the Lie algebra $\lie{l}$
of $L$. Therefore $2$-sided-invariant differential operators are identified with
\[ U(\lie{l})^L = Z(U(\lie{l})), \]
the center of $U(\lie{l})$. 

And here we are in for an unpleasant surprise. In our case, $\lie{l} = \lie{g}((t))$, but
for $G$ semisimple, unfortunately the center of $U(\lie{l})$ is
trivial. (Technically, since $\lie{l}$ is infinite-dimensional, one
should take a suitable completion of the universal enveloping algebra, but,
even so, the center is still trivial.) Thus our first attempt to quantize the Hitchin system fails. 

We can explain what went wrong. 
Classically, recall that $H_2 = \frac{1}{2} (\phi,\phi)$, and the Higgs field $\phi$ has the form
$\phi(z) \, \frac{dz}{z}$ with $\phi \in \lie{g}((t))$. Let 
$$
\phi
= \sum_{n\in \mathbb Z} \phi_n z^{-n}.
$$ 
Also pick an orthonormal basis $\lbrace a_i\rbrace$ of
$\lie{g}$ and write 
$$
\phi_m \eqqcolon \sum_i \phi_m^i a_i.
$$
 Then we have (dropping $(dz)^2$ for brevity)
\begin{equation}\label{h2for}
  H_2 = \frac{1}{2} \sum_n z^{-n-2} \sum_m (\phi_m, \phi_{n-m}) 
  =  \sum_n T_nz^{-n-2},
\end{equation}
where
\begin{equation}\label{tnfor}
T_n\coloneqq H_{2,-n-2}=\frac{1}{2}\sum_{m,i} \phi_m^i \phi_{n-m}^i
\end{equation} 

We now want to generalize these formulas to the quantum case, replacing 
two-sided invariant functions on $T^*G((t))$ by two-sided invariant differential operators 
on $G((t))$ so that $[\phi_\ell^j, H_2] = 0$ for all $\ell$, i.e., 
\begin{equation}\label{comre}
[\phi_\ell^j,T_n]=0
\end{equation} 
for all $\ell,n$, where
$$
[\phi_m^i,\phi_\ell^j]=\sum_r C_{ij}^r \phi_{m+\ell}^r
$$
and $C_{ij}^r$ are the structure constants of the commutator in $\mathfrak{g}$
in the basis $\lbrace a_i\rbrace$.

To make this precise, consider a completion $\widehat U(\lie{g}((t)))$ of the universal enveloping algebra $U(\lie{g}((t)))$ defined as follows. Let $U_n(\lie{g}((t)))$ be the subspace of $U(\lie{g}((t)))$
  consisting of elements of PBW degree $\le n$, and let $\widehat
  U_n(\lie{g}((t)))$ be its completion in the topology whose base of neighborhoods of 
  $0$ consists of the subspaces $U_{n-1}(\lie{g}((t)))\cdot t^m\lie{g}[[t]]$, 
  $m\ge 0$. Now let $\widehat U(\lie{g}((t))) \coloneqq 
  \bigcup_n \widehat U_n(\lie{g}((t)))$. Then by left-invariant differential operators 
  on $G((t))$ we mean elements of $\widehat U(\lie{g}((t)))$, and by two-sided invariant 
  differential operators on $G((t))$ -- elements of the center of this algebra. 
  
Now we can try to define the quantum operators $T_n\in \widehat U(\lie{g}((t)))$ by the same formula 
\eqref{tnfor} (it is easy to see that this series converges in $\widehat U(\lie{g}((t)))$ since $[\phi_m^i,\phi_{n-m}^i]=0$). But then the commutation relation \eqref{comre} does not hold, as seen from the following exercise (physicists call this phenomenon {\it quantum anomaly}). 

\begin{exercise}\label{quano}
Show that if the inner product $(,)$ in the definition of $H_2$ is normalized so that long roots of $\lie{g}$ have squared length $2$ then 
$$
[\phi_\ell^j,T_{n}]= \ell h^\vee\phi_{\ell+n}^j,
$$
where $h^\vee$ is the dual Coxeter number of $\lie{g}$, defined by the formula 
$$
\tr(\ad(a)^2)=2h^\vee (a,a),\ a\in \lie{g}.
$$
\end{exercise}

We might hope that this can be remedied by modifying the formula for $T_n$, 
but this is, unfortunately, a dead end -- one can show that the center of 
the algebra $\widehat U(\lie{g}((t)))$ is trivial. 

\subsection{Twisted differential operators} 

In fact, we were doomed to fail from the start, since Beilinson and Drinfeld showed
(\cite{BD}) that every globally defined differential operator on $\Bun_G(X)$ is a
scalar. However, we can recall something from physics to save the day:
differential operators on a manifold are {\it not} the most natural
quantization of functions on its cotangent bundle. Recall from quantum
mechanics that classical observables on $M = T^*Y$ should quantize to
operators on $L^2(Y)$. But to define $L^2(Y)$, one must fix a measure
on $Y$, and there is no natural choice for the measure in general. The
solution is to take $L^2(Y, \Omega^{1/2})$, where $\Omega$ is the
bundle of densities, and the $L^2$-norm is given by
\[ \|f(y) |dy|^{1/2}\|^2 \coloneqq \int_Y |f(y)|^2 \, |dy|. \]
Thus, from this point of view, the most natural quantization of
functions on the cotangent bundle is the algebra of {\it twisted}
differential operators $\cD(Y, \cK_Y^{1/2})$ acting on a square root
of the canonical bundle on $Y$. Remarkably, this algebra is
independent of the choice of this square root, and moreover makes
sense even when such a square root does not exist at all.

\begin{definition}
  Let $Y$ be a smooth variety and $L$ be a line bundle on $Y$. An {\it
    $L$-twisted differential operator} on $Y$ is a differential
  operator acting on local sections of $L$. 
\end{definition}

Let $\cD(Y, L^{\otimes n})$ be
  the algebra of $L^{\otimes n}$-twisted differential operators. Similarly to Exercise \ref{doex}(ii), it is
  generated by $\cO(Y)$ and elements $\nabla_v$ attached $\cO(Y)$-linearly to vector fields $v$ on $Y$, with relations that deform the relations in Exercise \ref{doex}(ii). 
Namely, if one locally picks a connection on $L$
 with curvature denoted $\omega$, the only relation that changes is
  $[\nabla_v, \nabla_u] = \nabla_{[v,u]}$ for vector fields $v,u$, which is replaced by 
   \[ [\nabla_v, \nabla_u] = \nabla_{[v,u]} + n\omega(v, u). \]
  So it makes sense to take any $n \in \bC$ (in particular, $n=\frac{1}{2}$, as noted above).

\begin{exercise}
  For $\lambda\in \mathbb C$, let $U_\lambda$ be the quotient of
  $U(\mathfrak{sl}_2)$ by the relation
  $C=\frac{1}{2}\lambda(\lambda+2)$, where $C=ef+fe+\frac{1}{2}h^2$ is
  the Casimir element. Show that $U_\lambda$ is isomorphic to the
  algebra of twisted differential operators $\cD(\mathbb
  P^1,\cO(1)^{\otimes \lambda})$. This is the simplest instance of the
  {\it Beilinson-Bernstein localization}, \cite{BB}. (Hint: use the
  representation of $\mathfrak{sl}_2$ given by formula \eqref{twidi}).
\end{exercise} 

\subsection{Twisted differential operators on \texorpdfstring{${\rm Bun}_G(X)$}{Bun\_G(X)}}\label{twidif}

Motivated by this, we replace $\cD(\Bun_G(X))$ with the algebra $\cD(\Bun_G(X),
\cK^{1/2})$ of differential operators on the line bundle $\cK^{1/2}$ (which in this case actually exists, see \cite{BD}). An important feature of this line bundle is that it
can be obtained by descending a line bundle from $G(K)=G((t))$ to the double quotient. 
Namely, for brevity let $G$ be simply connected and simple and $\mathbf k=\mathbb C$.
Then there exists the {\it Kac--Moody central extension} 
\[ 1 \to \bC^\times \to \widehat G \to G((t)) \to 1 
\]
which gives rise to a $\bC^\times$-bundle (i.e., a line bundle) $\mathcal L$
on $G((t))$ with 
$$
c_1(\mathcal L)\in H^2(G((t)),\mathbb Z)\cong H^3(G,\mathbb Z)\cong \mathbb Z
$$ 
being the element $1$. The Lie algebra of the resulting {\it Kac-Moody group} 
$\widehat G$ is the {\it affine Kac--Moody algebra}
\[ \widehat{\lie{g}} \coloneqq \lie{g}((t)) \oplus \bC {\rm K} \]
with commutator given by 
$$
[a(t), b(t)] \coloneqq [a,b](t) + \Res_{t=0} (da(t),b(t)) {\rm K}
$$ 
where the inner product on $\lie{g}$ is normalized so that long
roots have squared length $2$.

\begin{theorem}[\cite{BD}] \label{BDtheo}
  The bundle $\cK_{\Bun_G(X)}$ is the descendant of the line bundle
  $\cL^{-2h^\vee}$ on ${\rm Bun}_G(X)$, where $h^\vee$ is the dual
  Coxeter number of $\mathfrak g$.
\end{theorem}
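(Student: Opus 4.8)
The plan is to compute the canonical bundle of $\Bun_G(X)$ by relating its tangent complex to the universal adjoint bundle, and then to identify the resulting determinant line with a power of the Kac--Moody line $\cL$ via the theory of central extensions. First I would recall that, by Problem~\ref{adE}, the tangent space to $\Bun_G(X)$ at a bundle $E$ is $H^1(X,\ad E)$, and more precisely the tangent complex at $E$ is $R\Gamma(X,\ad E)[1]$ — there is an $H^0$ term coming from automorphisms, which is why $\Bun_G(X)$ is a stack rather than a variety. Hence the canonical bundle is the dual of the determinant of cohomology line: $\cK_{\Bun_G(X)} = \det R\Gamma(X,\ad E)$ (with a sign/shift bookkeeping), where $E$ ranges over the universal $G$-bundle on $X\times\Bun_G(X)$. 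This is the analogue of the statement that for $\GL_n$ the theta line bundle on $\Bun_{\GL_n}(X)$ is $\det R\Gamma(X,\cdot)$ of the universal vector bundle.

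Next I would pass to the double-quotient (uniformization) picture of Section~4: $\Bun_G(X) = G(R)\backslash G(K)/G(\cO)$, under which line bundles on $\Bun_G(X)$ pull back to $G(R)\times G(\cO)$-equivariant line bundles on the affine Grassmannian-type space $G(K)$, i.e. to characters of central extensions. The relevant fact (Kumar, Mathieu; see \cite{BD}) is that the Picard group of $\Gr_G$ for $G$ simple simply connected is $\bZ$, generated by the ample generator whose associated central extension of $G((t))$ is exactly the Kac--Moody extension $\widehat G$ defining $\cL$. So the claim reduces to computing a single integer: the ``level'' of the descendant of $\cK_{\Bun_G(X)}$, i.e. the multiple $c$ such that $\cK_{\Bun_G(X)}$ descends from $\cL^{c}$. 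I would pin this integer down by the standard local-to-global computation: the determinant-of-cohomology line for the universal family, restricted to the loop-group orbit through the trivial bundle, is governed by the Tate central extension of $\lie{g}((t))$ attached to the $\lie{g}$-module $\lie{g}((t))$ acting by the adjoint representation, whose cocycle is $\Res_{t=0}(d a, b)$ computed in the \emph{trace form} $\tr(\ad a\,\ad b)$. By the definition of the dual Coxeter number quoted just above Exercise~\ref{quano}, $\tr(\ad a\,\ad b) = 2h^\vee(a,b)$ in the normalization where long roots have squared length $2$, so the adjoint-representation cocycle equals $2h^\vee$ times the basic cocycle defining $\cL$; the dual (whence the sign) then gives $\cL^{-2h^\vee}$.

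I expect the main obstacle to be the careful bookkeeping at the interface between the stack $\Bun_G(X)$ and its uniformization: making precise that ``$\cK_{\Bun_G(X)}$ = determinant of the tangent complex of a stack'' and that this descends correctly through $G(R)\backslash G(K)/G(\cO)$ requires the formalism of determinant line bundles on ind-schemes and the comparison of the curvature/level computed ``at $t=0$'' with the global one on $X$ — essentially the content of the Beilinson--Drinfeld treatment, which one should cite rather than reprove. The purely Lie-theoretic input (the factor $2h^\vee$) is the easy and conceptual part; the analytic/stacky foundations are where all the real work hides. A clean way to present the argument for these notes is therefore: (i) reduce to identifying the integer via $\Pic(\Gr_G)\cong\bZ$; (ii) identify it with the level of the adjoint-representation determinant line; (iii) evaluate that level as $2h^\vee$ using $\tr(\ad a\,\ad b)=2h^\vee(a,a)$; (iv) take the dual to get the sign, citing \cite{BD} for the foundational points in (i)--(ii).
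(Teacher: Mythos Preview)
The paper does not prove Theorem~\ref{BDtheo}; it simply states the result with a citation to \cite{BD} and then uses it. So there is no ``paper's own proof'' against which to compare.

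That said, your outline is the standard and correct one, and is essentially the argument in \cite{BD}: identify $\cK_{\Bun_G(X)}$ with the determinant of cohomology of the universal adjoint bundle via the tangent complex $R\Gamma(X,\ad E)[1]$, pull back along uniformization to the affine Grassmannian, use $\Pic(\Gr_G)\cong\bZ\cdot[\cL]$ to reduce to computing a single integer, and read off that integer from the Tate/Kac--Moody cocycle for the adjoint representation, which is $2h^\vee$ times the basic cocycle by the identity $\tr(\ad a\,\ad b)=2h^\vee(a,b)$. Your self-assessment is also accurate: the Lie-theoretic computation of the level is short, while the genuine work lies in the foundational points (determinant lines on ind-schemes, descent through the double quotient, sign conventions for the tangent complex of a stack), all of which are treated carefully in \cite{BD} and are appropriately cited rather than reproved in notes of this kind.
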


\subsection{The Sugawara construction}

So we need to work with two-sided invariant twisted differential operators on $G((t))$ 
acting on the line bundle $\cL^{-h^\vee}$. 

To make this precise, given a number $k\in \mathbb C$ (called the {\it level}), consider the algebra 
$U_k(\lie{g}((t))) \coloneqq U(\widehat{\lie{g}})/(K-k)$ and define its completion $\widehat U_k(\lie{g}((t)))$ 
in the same way as we defined 
$\widehat U(\lie{g}((t)))$. Then left-invariant 
twisted differential operators on $G((t))$ 
acting $\cL^{\otimes k}$ are just elements 
on this algebra, and two-sided invariant operators are its central elements. 
Remarkably, it turns out that the algebra $\widehat U_k(\lie{g}((t)))$
for $k\in \mathbb C$ has a nontrivial center if and only if $k=-h^\vee$ (the so-called {\it critical level}). 
In fact, some of these central elements are not difficult to construct. 

Namely, let us keep the notation of Subsection \ref{quanosec}, 
but now view $T_n$ as elements of $\widehat U_k(\lie{g}((t)))$. 
Some care must now be taken, because the infinite sum defining $T_0$ is not an element of the completion $\widehat U_k(\lie{g}((t)))$ unless $k=0$: the series no longer converges since $[\phi_m^i,\phi_{-m}^i]=mk$. The way to fix this is to use {\it normal ordering}, i.e. to define
$$
T_n \coloneqq \frac{1}{2}\sum_{m,i} \, \NO{\phi_m^i \phi_{n-m}^i},
$$
where the normally-ordered product $\NO{\phi_m^i \phi_\ell^i}$ is defined 
by the formula
$$
\NO{\phi_m^i \phi_\ell^i}=\begin{cases}\phi_m^i \phi_\ell^i\text{ if }m\le \ell\\ \phi_\ell^i \phi_m^i\text{ if }m>\ell.\end{cases}
$$
This does not affect $T_n$ for $n \neq 0$, but changes $T_0$ into a well-defined element of $\widehat U_k(\lie{g}((t)))$.

Let ${\rm W}$ be the (formal) {\it Witt algebra}, i.e., the Lie algebra of vector fields on $D^\times$, ${\rm W}=\mathbb C((t))\partial_t$, with bracket 
$$
[f\partial_t,g\partial_t]=(fg'-gf')\partial_t.
$$ 
Then ${\rm W}$ has a (topological) basis $L_n\coloneqq -t^{n+1}\partial_t, n\in \mathbb Z$, such that 
$$
[L_n, L_m] = (n-m) L_{m+n}.
$$
It is well known (see \cite{Ka}) that ${\rm W}$ has a unique non-trivial 1-dimensional central extension 
called the (formal) {\it Virasoro algebra} and denoted ${\rm Vir}$. It has a (topological) basis consisting of $L_n,n\in \mathbb Z$ and a central element $C$ such that 
$$
     [L_n, L_m] = (n-m) L_{m+n} + \frac{n^3-n}{12}\delta_{n+m,0}C.
$$
If $V$ is a representation of ${\rm Vir}$ with $C$ acting as $c\in \mathbb C$, then we say that 
$V$ has {\it central charge} $c$. 

Generalizing the computation of Exercise \ref{quano} to the case of
nonzero level, and taking the normal ordering into
  account, we obtain the following theorem.

\begin{theorem}[Sugawara construction, see \cite{Ka}] One has 
  \begin{align*}
    [\phi_\ell, T_n] &= (k + h^\vee) \ell\phi_{n+\ell}, \\
    [T_n, T_m] &= (k+ h^\vee) (n-m) T_{m+n} + \frac{n^3-n}{12} k(k + h^\vee) \dim (\lie{g}) \delta_{n+m,0}.
  \end{align*}
  Hence, when $k\neq -h^\vee$, the elements
  \[ L_n \coloneqq \frac{T_n}{k + h^\vee} \]
  define a representation of the {\it Virasoro algebra} with central charge $c \coloneqq \frac{k \dim
    \lie{g}}{k + h^\vee}$ compatible with $\phi_\ell$:
    $$
    [\phi_\ell, L_n] = \ell\phi_{n+\ell}, 
    $$
    $$
     [L_n, L_m] = (n-m) L_{m+n} + \frac{n^3-n}{12} \frac{k\dim \lie{g}}{k + h^\vee} \delta_{n+m,0}.
$$
On the other hand, when $k=-h^\vee$ (the critical level) then $T_n$ are central elements. 
\end{theorem}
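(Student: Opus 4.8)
The plan is to compute both commutation relations directly in the completed enveloping algebra, writing $\widehat{\lie{g}}$ in the orthonormal basis $\{a_i\}$ of $\lie{g}$ so that the affine relations read
$$[\phi^i_m, \phi^j_\ell] = \sum_r C^r_{ij}\,\phi^r_{m+\ell} + m\,\delta_{m+\ell,0}\,\delta_{ij}\,{\rm K}, \qquad C^r_{ij} := ([a_i,a_j],a_r),$$
and to keep track of the ${\rm K}$-dependence that is absent at level zero (cf.\ Exercise~\ref{quano}). Here $C^r_{ij}$ is totally antisymmetric in its three indices because the form is invariant, and the one external input I will need is the adjoint Casimir identity $\sum_i \ad(a_i)^2 = 2h^\vee\cdot\id_{\lie{g}}$, i.e.\ $\sum_{i,s}C^s_{ij}C^r_{is}=2h^\vee\delta_{jr}$, which follows from the normalization $\tr(\ad a)^2 = 2h^\vee(a,a)$ by taking traces. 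Two bookkeeping facts make the normal ordering manageable: $\sum_i C^r_{ii}=0$, so that $\sum_i[\phi^i_m,\phi^i_{n-m}] = m\,\delta_{n,0}\dim(\lie{g})\,{\rm K}$ is central; and $\NO{\,\cdot\,}$ is symmetric in its two slots.

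For the first identity I would note that, by the previous remark, the difference between $\sum_{m,i}\NO{\phi^i_m\phi^i_{n-m}}$ and the formal unordered sum is central, so the single mode $\phi^j_\ell$ may be commuted past $T_n$ term by term, Leibniz-style, with a convergent result. Expanding $[\phi^j_\ell,\phi^i_m]$ splits $[\phi^j_\ell,T_n]$ into a structure-constant part and a ${\rm K}$-part; the ${\rm K}$-part picks up the two contributions $m=-\ell$ and $m=n+\ell$ and equals $\ell\,\phi^j_{n+\ell}{\rm K}$, while the structure-constant part, after symmetrizing the two Leibniz terms using total antisymmetry of $C$ and applying the Casimir identity, collapses (exactly as in Exercise~\ref{quano}, now carrying ${\rm K}$ inertly) to $h^\vee\ell\,\phi^j_{n+\ell}$. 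Adding the two gives $[\phi^j_\ell, T_n] = ({\rm K}+h^\vee)\,\ell\,\phi^j_{n+\ell}$.

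For the second identity I would write $[T_n, T_m] = \tfrac12\sum_{k,i}[T_n,\NO{\phi^i_k\phi^i_{m-k}}]$ and push $T_n$ through. By the first identity $T_n$ acts, to leading order, as a degree-$n$ derivation, which produces the term $({\rm K}+h^\vee)(n-m)T_{m+n}$; but $T_n$ is not literally a derivation of $\NO{\,\cdot\,}$, and the anomalous correction is a scalar supported on $n+m=0$. The hard part of the whole proof is to evaluate this anomaly correctly: I expect the combinatorics to reduce it to a sum of the form $\sum_{1\le j\le n-1} j(n-j) = \tfrac{n^3-n}{6}$ for $n>0$, multiplied by the factor $\tfrac12\,{\rm K}({\rm K}+h^\vee)\dim(\lie{g})$ coming from the two ways of contracting the two currents, yielding $\tfrac{n^3-n}{12}\,{\rm K}({\rm K}+h^\vee)\dim(\lie{g})\,\delta_{n+m,0}$, as claimed. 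The least error-prone route for this step is to repackage the modes into the fields $\phi^i(z)=\sum_m \phi^i_m z^{-m-1}$ and $T(z)=\sum_n T_n z^{-n-2}$, compute the operator product expansions $\phi^i(z)\phi^j(w)$, $T(z)\phi^i(w)$ and $T(z)T(w)$ by Wick contractions, and extract the mode relations; then $\tfrac{n^3-n}{12}$ is just the universal mode expansion of the $\tfrac{c/2}{(z-w)^4}$ term of the $TT$-expansion with $c = {\rm K}\dim(\lie{g})/({\rm K}+h^\vee)$, cleared of its denominator by the overall $({\rm K}+h^\vee)^2$.

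The remaining assertions are then immediate. Specializing ${\rm K}\mapsto k\ne -h^\vee$ and dividing by $(k+h^\vee)$ (respectively $(k+h^\vee)^2$) turns the two identities into the displayed relations among the $\phi^i_\ell$ and the $L_n = T_n/(k+h^\vee)$, with central charge $c = k\dim(\lie{g})/(k+h^\vee)$. Specializing ${\rm K}\mapsto -h^\vee$ instead makes both right-hand sides vanish, so every $\phi^i_\ell$ (and the scalar $-h^\vee$) commutes with every $T_n$; since $\widehat U_{-h^\vee}(\widehat{\lie{g}})$ is topologically generated by the $\phi^i_\ell$, the $T_n$ lie in its center, which is the claim at the critical level.
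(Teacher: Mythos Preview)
Your proposal is correct and follows exactly the route the paper indicates: the paper does not give a proof but simply says ``Generalizing the computation of Exercise~\ref{quano} to the case of nonzero level, we obtain the following theorem'' with a reference to \cite{Ka}, and your outline carries out precisely that generalization (tracking the extra ${\rm K}$-term in the affine bracket for the first identity, then bootstrapping to the $[T_n,T_m]$ relation). Your suggestion to repackage the anomaly computation via OPEs is also the standard way this is done in \cite{Ka}.
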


The elements $T_n$ are called the {\it Sugawara elements}. 

\subsection{\texorpdfstring{$\mathfrak{sl}_2$}{sl2}-opers on \texorpdfstring{$D^\times$}{D*}}
Note that at the non-critical level $k\ne -h^\vee$ the commutation relations for $T_n$ can be written as 
$$
[L_n, T_m] = (n-m) T_{m+n} + \frac{n^3-n}{12} K\dim (\lie{g}) \delta_{n+m,0}.
$$
In particular, this has a well defined specialization at the critical level, to an action 
of ${\rm W}$ on $\mathbb C[T_n,n\in \mathbb Z]$: 
$$
L_n\circ T_m = (n-m) T_{m+n} - \frac{n^3-n}{12} h^\vee\dim (\lie{g}) \delta_{n+m,0}.
$$
This action deforms the action in the classical case given by 
$$
L_n\circ T_m = (n-m) T_{m+n},
$$
which expresses the fact that $T_n\in {\rm W}$ are linear functions on the space ${\rm W}^*$ of quadratic 
differentials $\mathbb C((t))(dt)^2$ on $D^\times$: for $\eta=\eta(t)(dt)^2\in \mathbb C((t))(dt)^2$, 
$$
T_n(\eta)=-{\rm Res} (t^{n+1}\partial_t\cdot \eta)=-{\rm Res}(t^{n+1}\eta(t)dt). 
$$
In the quantum case, however, the additional summand means that 
$T_n$ are rather {\it affine linear} functions on the hyperplane ${\rm Vir}_1^*\subset {\rm Vir}^*$
defined by the equation $C=1$ (or, equivalently, $C=c$ for any nonzero value of $c$). 
So we may ask for the geometric meaning of elements of ${\rm Vir}_1^*$; this is a certain deformation of the notion of a quadratic differential on $D^\times$. This question is answered by the following exercise. 

\begin{exercise}\label{operex}
  (i) Show that the coadjoint representation ${\rm Vir}^*$ of the Virasoro algebra ${\rm Vir}$
   is isomorphic as a ${\rm W}$-module (hence as an ${\rm Aut}(D)$-module) to the space of differential operators 
   $$
   L=\alpha \di_t^2 + u(t)\colon \cK^{-1/2} \to \cK^{3/2}
   $$ 
   on $D^\times$, for
  $\alpha \in \bC$, i.e., 
  $$
  L(f(t)(dt)^{-1/2})=(\alpha f''(t)+u(t)f(t))(dt)^{3/2},
  $$
  where $f,u\in \mathbb C((t))$. In particular, ${\rm Vir}_1^*$ is identified 
  with the space of {\it Hill operators} 
  $$
  L=\partial^2_t+u(t)\colon \cK^{-1/2}\to \cK^{3/2}.
  $$ 

(ii) Part (i) implies   that the space of Hill operators is invariant under the group ${\rm Aut}(D)$ of formal changes of the variable $t$. Give another proof of this fact by giving a coordinate-free definition of a Hill operator; namely, interpret Hill operators as formally self-adjoint second-order differential operators $\cK^{-1/2}\to \cK^{3/2}$ with symbol $1$.\footnote{If $L$ is a differential operator $\cK^a\to \cK^b$ on $D^\times$ then one can 
canonically define the formal adjoint
$L^*\colon \cK^{1-b}\to \cK^{1-a}$ with respect to the pairing $(f,g)={\rm Res}(fg)$ between $\cK^b$ and $\cK^{1-b}$, which is a differential operator of the same order. So if $a+b=1$, it makes sense to say that $L$ is self-adjoint: $L=L^*$. Also if the order of $L$ is $n$ then we can canonically define its symbol $\sigma(L)$, which is a section of $\cK^{b-a-n}$. So if $b-a=n$ then the symbol is a function (element of $\mathbb C((t))$).}

(iii) Compute how $L$ transforms
  under changes $t\mapsto s(t)$ of the formal coordinate $t$. Hint: you should see the  function 
  $$
  D(s)=\tfrac{s'''}{s'}-\tfrac{3}{2}(\tfrac{s''}{s'})^2
  $$ called the {\it Schwarzian derivative} of $s$.
\end{exercise}

\begin{definition}[\cite{BD,BD:opers}] A Hill operator $L=\partial^2_t+u(t)\colon \cK^{-1/2}\to \cK^{3/2}$, 
where $u\in \mathbb C((t))$, is called an {\it $\mathfrak{sl}_2$-oper} on $D^\times$. 
\end{definition} 

This is a special case of the general notion of a $\lie{g}$-oper for a finite dimensional simple Lie algebra $\lie{g}$ defined in \cite{BD}. The terminology 
is motivated by the fact that the problem of solving the differential 
equation $L\psi=0$ reduces to the problem of integrating the $\SL_2$-connection 
$$
\nabla=\partial+\begin{pmatrix} 0& 1\\ u & 0\end{pmatrix}.
$$

Thus we see that the subalgebra $Z_{\rm Sug}$ of the center $Z$ of $\widehat U_{-h^\vee}(\widehat{\lie{g}})$ generated (topologically) by the Sugawara elements $T_n$ may be interpreted as the algebra of polynomial functions on the space ${\rm Op}_{\mathfrak{sl}_2}(D^\times)\coloneqq{\rm Vir}_1^*$ of $\mathfrak{sl}_2$-opers 
on $D^\times$. 

\begin{exercise} Show that for a function $s=s(t)$ ($C^3$ on an interval or holomorphic on a disk), one has $D(s)=0$ if and only if $s$ is a M\"obius transformation $s(t)=\frac{at+b}{ct+d}$. 
Use this to show that under M\"obius transformations $t\mapsto \frac{at+b}{ct+d}$ (with $b$ being a formal variable), $\mathfrak{sl}_2$-opers transform as quadratic differentials. (This is a reflection of the fact that the 2-cocycle $\frac{n^3-n}{12}\delta_{n+m,0}$ defining the Virasoro algebra vanishes for $n=-1,0,1$, i.e., on the M\"obius Lie subalgebra $\langle L_{-1},L_0,L_1\rangle$ of the Witt algebra ${\rm W}$). 
\end{exercise}

\subsection{\texorpdfstring{$\mathfrak{sl}_2$}{sl2}-opers on curves} 

Now let $X$ be a smooth algebraic curve over $\mathbb C$. By analogy with the previous subsection we make the following definition. 

\begin{definition} An $\mathfrak{sl}_2$-oper on $X$ is a differential operator 
$\cK_X^{-1/2}\to \cK_X^{3/2}$ which in local coordinates looks like $\partial_t^2+u(t)$ where $u$ is a regular function. 
\end{definition} 

Similarly to Exercise \ref{operex}, we may alternatively define an $\mathfrak{sl}_2$-oper
as a formally self-adjoint second order differential operator $\cK_X^{-1/2}\to \cK_X^{3/2}$ with symbol $1$. As this definition is independent of the choice of coordinates, so is the original one. 

\begin{exercise} \label{operbun}
  Let $X$ be a smooth irreducible projective curve over $\mathbb C$ of
  genus $g \ge 2$ and fix a spin structure on $X$ (i.e., a square root
  $\cK_X^{1/2}$; these choices form a torsor over the group ${\rm
    Jac}_{X,2}\cong (\mathbb Z/2)^{2g}$ of points of order $2$ on
  ${\rm Jac}(X)$).
  \begin{enumerate}[label=(\roman*)]
  \item Show that $X$ admits a unique rank $2$ vector bundle $E_X$,
    called the {\it oper bundle}, such that there exists a non-split short exact sequence
    \[ 0 \to \cK_X^{1/2} \to E_X \to \cK_X^{-1/2} \to 0, \]
    and this sequence is unique up to scaling the arrows. 
  
  \item Show that $\lie{sl}_2$-opers on $X$ are in natural bijection
    with connections on $E_X$.
  
  \item Show that $\lie{sl}_2$-opers on $X$ exist and form a torsor over $H^0(X,
    \cK_X^{\otimes 2})$ (i.e. an affine space of dimension $3g-3$).
  
  \item Compute $\lie{sl}_2$-opers explicitly on a genus $2$ curve using the
    hyperelliptic realization.
  \end{enumerate}
\end{exercise}

\begin{exercise}\label{difgeo}
  \begin{enumerate}[label=(\roman*)]
  \item Let $X$ be a smooth irreducible projective curve over $\mathbb
    C$, viewed as a Riemann surface. Use the uniformization theorem to
    give an open cover of $X$ with transition maps being M\"obius
    transformations with real coefficients. Use this cover to identify
    the affine space ${\rm Op}_{\mathfrak{sl}_2}(X)$ with the vector
    space of quadratic differentials $H^0(X,\cK_X^{\otimes 2})$. In particular,
    this introduces an origin in ${\rm Op}_{\mathfrak{sl}_2}(X)$,
    denoted $L_{\rm un}$ and called the {\it uniformization oper} (a
    term motivated by part (ii)).

  \item Let $J\colon \mathbb C_+\to X$ be a uniformization map, and
    let $f=J^{-1}$ be the multivalued inverse function. Show that
    $f(z)=\frac{\psi_1(z)}{\psi_2(z)}$, where $\lbrace
    \psi_1,\psi_2\rbrace$ is a basis of the space of holomorphic
    solutions of the differential equation $L_{\rm un}\psi=0$ near some point $x\in X$ with
    Wronskian $1$ in which the monodromy matrices of this equation
     have real entries. Show that both
    uniformization maps and such bases (up to sign of $\psi_j$) form
    ${\rm PSL}_2(\mathbb R)$-torsors, and the above correspondence is
    an isomorphism between these torsors.

  \item Let
    $$
    \beta(z,\overline z)\coloneqq\im(\psi_1(z)\overline{\psi_2}(z))=\frac{\psi_1(z)\overline{\psi_2}(z)-\psi_2(z)\overline{\psi_1}(z)}{2i}.
    $$
    Show that $\beta$ is a positive $-1/2$-density on $X$ independent
    on the choice of the basis $\lbrace\psi_1,\psi_2\rbrace$.

  \item Recall that in a Riemann surface, conformal metrics are in
    natural bijection with positive densities (in local coordinates,
    $\rho (dx^2+dy^2)$ corresponds to $\rho \,dx\, dy$). Show that
    (upon suitable normalization) $\beta^{-2}$ is the positive density on $X$ corresponding to the
    Poincar\'e metric (with Gaussian curvature $-1$).
  \end{enumerate}
\end{exercise} 

\begin{exercise}[\cite{Fa2,Go}; see also \cite{EFK}]
  This is a generalization of Exercise \ref{difgeo}.

  \begin{enumerate}[label=(\roman*)]
  \item An $\mathfrak{sl}_2$-oper $L$ on $X$ is said to have {\it real
    monodromy} if there exists a basis $\lbrace\psi_1,\psi_2\rbrace$
    of holomorphic solutions of the equation $L\psi=0$ with Wronskian
    $1$ in which its monodromy matrices are real. Show that this is
    equivalent to the existence of a non-zero single-valued real
    $C^\infty$ solution $\beta$ of the equation $L\beta=0$, and such a
    solution, when exists, is unique up to scaling.

  \item Let $L$ be an $\mathfrak{sl}_2$-oper on $X$ with real
    monodromy, and $\beta$ be the corresponding real solution of
    the differential equation $L\beta=0$. Show that the system of equations $\beta=0$,
    $d\beta=0$ has no solutions on $X$, so the zero set $Z(L)$ of
    $\beta$ is a smooth $1$-dimensional real submanifold on $X$ (a
    collection of non-intersecting simple closed smooth curves).

  \item Show that the density $\beta^{-2}$ defines a complete
    Poincar\'e metric on $X\setminus Z(L)$ with logarithmic
    singularities on $Z(L)$ (i.e., near a point of $Z(L)$ it is
    isomorphic to the Poincar\'e metric on the upper half plane near
    the origin).

  \item A {\it real projective structure} on $X$ is an equivalence class of
    atlases of charts with transition maps being real M\"obius
    transformations, i.e., elements of ${\rm PSL}_2(\mathbb R)$ (two
    atlases are equivalent if they have a common refinement). Show
    that $\mathfrak{sl}_2$-opers on $X$ with real monodromy correspond
    to equivalence classes of real projective structures on $X$.
  \end{enumerate}
\end{exercise} 

\subsection{Quantization of the Hitchin system for \texorpdfstring{$G=\SL_2$}{G=SL2}}

We can now use the Sugawara elements $T_n$ to construct quantizations of quadratic Hitchin Hamiltonians, as outlined at the end of Subsection \ref{twidif}. Namely, according to Theorem \ref{BDtheo}, $T_n$ descend to second order twisted differential operators $\overline T_n\in \mathcal D({\rm Bun}_G^\circ(X),\cK^{1/2})$. 
By considering the semiclassical limit, it is easy to see that $\lbrace\overline T_n \rbrace$ span a vector space of dimension $3g-3$ and are algebraically independent. 
So for $G=\SL_2$ they provide a quantization of the Hitchin system. 

Let us consider the case $G=\SL_2$ in more detail. Recall that in this case 
$T_n$ can be interpreted as functions on the space ${\rm Op}_{\mathfrak{sl_2}}(D^\times)$. 
Namely, given $L\in {\rm Op}_{\mathfrak{sl_2}}(D^\times)$, $L=\partial_t^2+u(t)$, 
where $u(t)=\sum_{n\in \mathbb Z} u_nt^n$, we have $T_n(L)=u_{-n-2}$. 
It is clear that the same should apply to $\overline T_n$, except 
that now $L\in {\rm Op}_{\mathfrak{sl_2}}(X)$ is an oper on $X$; 
this is just the quantum analog of the commutative diagram \eqref{comdia}. 
Namely, similarly to Remark \ref{desce}, for $n>-1$ the element $T_n$ descends to $0$, 
while for $n\le -2$ it descends to the Taylor coefficient $u_{-n-2}$ of the oper $L$ on $X$. 

Moreover, as in the classical case, this story extends straightforwardly to the ramified case, with parabolic structures and, more generally, twistings (see Section \ref{sec:hitchin-with-parabolic-and-twists}), using the notion of quantum Hamiltonian reduction with respect to an ideal in $U(\lie{g})$ (cf. Subsection \ref{qhr}). 

\begin{exercise} For $\lambda\in \mathbb C$ let $I_\lambda\subset U(\mathfrak{sl}_2)$ be the ideal generated by $C-\frac{\lambda(\lambda+2)}{2}$, where $C=ef+fe+\frac{h^2}{2}$ is the Casimir element. 
Show that the twisted quantum Hitchin system for $X=\mathbb P^1$ 
with marked points $t_1,\ldots,t_N$ and twistings by the ideals $I_{\lambda_j}$ 
at $t_j$ is the Gaudin system with parameters $\lambda_j$. 
\end{exercise} 

Finally, in view of Remark~\ref{encoun}, by considering the semiclassical limit, we obtain the following theorem in the unramified case. 

\begin{theorem}[\cite{BD}]
  Let $A$ be the algebra of twisted differential operators on ${\rm
    Bun}_G(X)$ for $G=\SL_2$ generated by the elements $\overline
  T_n$. Then
  \begin{enumerate}[label=(\roman*)]
  \item $A$ is a polynomial algebra in $3g-3$ generators, and ${\rm
    Spec}A$ is naturally identified with the affine space of opers
    ${\rm Op}_{\lie{sl}_2}(X)$;

  \item $A$ coincides with the algebra of all global twisted
    differential operators $\cD({\rm Bun}_G(X),\cK^{1/2})$.
  \end{enumerate}
\end{theorem}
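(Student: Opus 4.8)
The plan is to deduce both parts from facts already in hand --- the $\SL_2$ case of Hitchin's integrability theorem (Theorem~\ref{hisl_n}), the computation of the global functions on $T^*\Bun_G(X)$ (Remark~\ref{encoun}), and the oper interpretation of the Sugawara elements developed in Subsection~\ref{twidif} and Exercise~\ref{operex} --- by passing to the associated graded with respect to the filtration by order on twisted differential operators. The two nontrivial inputs I will take for granted are: that the critical-level Sugawara elements $T_n$ of Subsection~\ref{quanosec} are central in $\widehat U_{-h^\vee}(\widehat{\lie{g}})$ and, by Theorem~\ref{BDtheo}, descend to global twisted differential operators $\overline T_n\in\cD(\Bun_G^\circ(X),\cK^{1/2})$ of order $\le 2$; and that, in the $\SL_2$ case, $Z_{\rm Sug}=\bC[T_n]$ is the ring of polynomial functions on the space ${\rm Op}_{\mathfrak{sl}_2}(D^\times)$ of $\mathfrak{sl}_2$-opers on the punctured disk, with $T_n$ the coordinate $L=\di_t^2+u(t)\mapsto u_{-n-2}$.

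For part (i), I would first observe that the principal symbol of $\overline T_n$ is the classical quadratic Hitchin hamiltonian $H_{2,-n-2}$, i.e.\ the $(-n-2)$-th Taylor coefficient at the base point of $\tfrac{1}{2}\tr\phi^2\in H^0(X,\cK_X^{\otimes 2})$; this is the semiclassical limit of the quantum analog of diagram~\eqref{comdia}. Since the Taylor-expansion map $H^0(X,\cK_X^{\otimes 2})\hookrightarrow\bC((t))(dt)^2$ is injective and $\dim H^0(X,\cK_X^{\otimes 2})=3g-3$, the symbols $H_{2,m}$ span a $(3g-3)$-dimensional space, and by the $\SL_2$ case of Theorem~\ref{hisl_n} a basis of that span consists of algebraically independent functions. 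Lifting, any $3g-3$ of the $\overline T_n$ whose symbols form such a basis are algebraically independent and generate $A$, so $A$ is a polynomial algebra in $3g-3$ generators. To identify $\Spec A$ with ${\rm Op}_{\mathfrak{sl}_2}(X)$, I would note that descending from $G((t))$ to $\Bun_G(X)$ restricts the picture from opers on $D^\times$ to those extending over all of $X$, so the $\overline T_n$ become the coordinate functions $L\mapsto u_{-n-2}$ on the affine space ${\rm Op}_{\mathfrak{sl}_2}(X)$ --- the quantum counterpart of the bottom row of~\eqref{comdia}. By Exercise~\ref{operbun}(iii), ${\rm Op}_{\mathfrak{sl}_2}(X)$ is an affine space of dimension $3g-3$ modeled on $H^0(X,\cK_X^{\otimes 2})$, and the induced morphism ${\rm Op}_{\mathfrak{sl}_2}(X)\to\Spec A\cong\bA^{3g-3}$ is affine with underlying linear map a restriction of the injective Taylor expansion, hence a bijection of affine spaces of equal dimension, i.e.\ an isomorphism.

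For part (ii), the inclusion $A\subseteq\cD(\Bun_G(X),\cK^{1/2})$ is immediate since each $\overline T_n$ is a global twisted differential operator, and I would obtain the reverse inclusion by comparing associated graded algebras for the order filtration. The symbol map embeds $\gr\cD(\Bun_G(X),\cK^{1/2})$ into $\cO(T^*\Bun_G(X))$ (working over the smooth open substack $\Bun_G^\circ(X)$, whose complement has codimension $\ge 2$, so that regular functions extend). By Remark~\ref{encoun}, $\cO(T^*\Bun_G(X))$ is the polynomial algebra $\bC[H_{2,m}]$ generated by the classical quadratic Hitchin hamiltonians --- for $G=\SL_2$ the Hitchin base is just $H^0(X,\cK_X^{\otimes 2})$ --- which by part (i) equals $\gr A$. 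Hence $\gr A\subseteq\gr\cD(\Bun_G(X),\cK^{1/2})\subseteq\cO(T^*\Bun_G(X))=\gr A$, forcing all three to coincide; since the order filtration is exhaustive and $A$ is a filtered subalgebra inducing an isomorphism on associated graded algebras, we conclude $A=\cD(\Bun_G(X),\cK^{1/2})$. (The degree-zero base case of this last step uses $H^0(\Bun_G(X),\cO)=\bC$ for $G=\SL_2$.)

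The hard part is not this assembly but its inputs, each a theorem of substance: Theorem~\ref{BDtheo} and the existence of the large critical-level center together with its oper realization, and the identification $\cO(T^*\Bun_G(X))=\bC[\text{Hitchin hamiltonians}]$ of Remark~\ref{encoun}; both require care with the infinite-dimensional group $G((t))$, suitable completions of $U(\widehat{\lie{g}})$, and differential operators on the stack $\Bun_G(X)$, for which I would refer to~\cite{BD}. Modulo these, the only genuinely new point beyond the classical Hitchin story is the lifting-by-symbols argument above.
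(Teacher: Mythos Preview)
Your proposal is correct and follows essentially the same route as the paper, which does not give a detailed proof but merely sketches it: the text preceding the theorem says that the $\overline T_n$ span a $(3g-3)$-dimensional space and are algebraically independent ``by considering the semiclassical limit,'' identifies $\Spec A$ with ${\rm Op}_{\mathfrak{sl}_2}(X)$ via the quantum analog of diagram~\eqref{comdia}, and then invokes Remark~\ref{encoun} for part~(ii). Your filtered-to-graded argument is exactly the right way to make this precise, and your explicit acknowledgment of the nontrivial inputs (Theorem~\ref{BDtheo}, the oper realization of the Sugawara center, and the computation of $\cO(T^*\Bun_G(X))$ from~\cite{BD}) is appropriate.
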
 

Thus we see that quantization of the Hitchin system is given by a canonical map 
$\mathbb C[{\rm Op}_{\lie{sl}_2}(X)]\to \cD({\rm Bun}_G(X),\cK^{1/2})$, which happens to be an isomorphism. This shows that the affine space ${\rm Op}_{\lie{sl}_2}(X)$ 
plays the role of quantization of the Hitchin base $\mathcal B$, which 
in the $\SL_2$ case is the space of quadratic differentials on $X$. 

\begin{remark} In the ramified case the spectrum of the Hitchin system should be 
interpreted as the space of opers with appropriate singularities. We will not discuss this here and refer the reader to \cite{Fr2,BD:opers}.
\end{remark} 

\subsection{The Feigin--Frenkel theorem and quantization of Hitchin systems in higher rank}

We would now like to generalize quantization of Hitchin systems to
simple groups $G$ of rank $r>1$. And here we encounter a serious
difficulty: the elements $\overline T_n$ still span a space of
dimension $3g-3$, so no longer suffice for a quantum integrable
system. To complete this collection of operators to an integrable
system using our approach, we must construct more central elements of
$\widehat U_{-h^\vee}(\lie{g}((t)))$, independent of
the $T_n$. Remarkably, the needed elements do exist, but the proof of
their existence is quite non-trivial. This is the celebrated theorem
of Feigin and Frenkel, which is one of the most fundamental facts
about affine Lie algebras.

\begin{theorem}[Feigin--Frenkel, \cite{FF}]
  The center of $\widehat U_{-h^\vee}(\lie{g}((t)))$ is 
  generated (topologically) by the elements $\widehat H_{i,n}$, $i=1,\ldots,r, n\in \mathbb Z$, which quantize the classical 
  Hamiltonians $H_{i,n}$, and such that $\widehat H_{2,n}=T_{-n-2}$. 
\end{theorem}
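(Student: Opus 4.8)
The plan is to translate the statement into the language of vertex algebras, dispose of the classical limit with Chevalley's theorem, and then invoke the Feigin--Frenkel free-field realization to promote the classical answer to the quantum one; that last promotion is the entire difficulty.

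First I would study $\widehat U(\widehat{\lie g})/\langle{\rm K}=-h^\vee\rangle$ via the vacuum module $\mathbb V\coloneqq U(\widehat{\lie g})\otimes_{U(\lie g[[t]]\oplus\bC{\rm K})}\bC$ at the critical level, which is a vertex algebra with translation operator $\partial=L_{-1}$ (but no conformal vector at the critical level). The key structural fact is that the center of the completed enveloping algebra is topologically generated by the Fourier modes of the fields $Y(S,z)=\sum_nS_{(n)}z^{-n-1}$ attached to vectors $S$ of the center $\mathfrak z\coloneqq\mathbb V^{\lie g[[t]]}=\mathbb V^{\lie g[t]}$ of $\mathbb V$: a vector $S$ lies in $\mathfrak z$ precisely when all modes $S_{(n)}$ commute with $\widehat{\lie g}$. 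So the theorem reduces to showing that $\mathfrak z$, as a differential commutative algebra, is freely generated by $r$ elements $S_1,\dots,S_r$ of conformal weights $d_1,\dots,d_r$ (the degrees of $\lie g$), the weight-$2$ one being the Sugawara vector. One then sets $\widehat H_{2,n}\coloneqq T_{-n-2}$, central by the Sugawara theorem above, and defines the remaining $\widehat H_{i,n}$ as the Fourier modes of $Y(S_i,z)$.

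Next I would compute the classical limit. With the PBW filtration, $\gr\mathbb V$ is the coordinate ring of the jet scheme $J_\infty\lie g^*$ with its Poisson vertex structure, and $\gr\mathfrak z$ lies in the Poisson center $(\gr\mathbb V)^{J_\infty\lie g}$. Applying Chevalley's theorem jet-wise --- using that $\lie g^*/\!\!/G\cong\lie h^*/W$ is an affine space with flat quotient map, whose jet scheme is again an affine space, on the jets of $Q_1,\dots,Q_r$ --- identifies this Poisson center with the polynomial algebra $\bC[\partial^n\overline H_i:1\le i\le r,\,n\ge0]$ whose generators $\overline H_i$ are the symbols of the desired $\widehat H_i$ and whose modes recover the classical hamiltonians $H_{i,n}$. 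So the only remaining point is that $\gr\mathfrak z$ fills out \emph{all} of this Poisson center, i.e. that every classical invariant lifts to an honest central vector --- which is \emph{false} at every level $k\ne-h^\vee$, where only the vacuum and the Sugawara field survive, and true only at the critical level.

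The hard part --- this is the content of the Feigin--Frenkel theorem --- is precisely this lifting, and I would carry it out via the Wakimoto (free-field) realization: an embedding of $\mathbb V_{-h^\vee}$ into a commutative Heisenberg vertex algebra $\pi_0$ for $\widehat{\lie h}$ at the critical level under which $\mathfrak z$ becomes the joint kernel $\bigcap_{i=1}^r\ker\bigl(\widetilde S_i\colon\pi_0\to\pi_{-\alpha_i}\bigr)$ of the screening operators $\widetilde S_i$. At the critical level the screening currents degenerate to the classical Miura screenings, whose joint kernel one computes --- via the de Rham-type complex they span together with a BGG-style resolution argument --- to be the classical $\mathcal W$-algebra $\mathcal W(\lie g^\vee)=\bC[{\rm Op}_{\lie g^\vee}(D^\times)]$, a polynomial differential algebra on $r$ generators of conformal weights $d^\vee_i=d_i$ (the exponents of $\lie g$ and of its Langlands dual coincide). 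This supplies the generators $S_1,\dots,S_r$, whose symbols fill out the Poisson center from the previous paragraph, so $\mathfrak z=\bC[\partial^nS_i:1\le i\le r,\,n\ge0]$, the $\widehat H_{i,n}$ quantize the $H_{i,n}$, and the Sugawara normalization forces $\widehat H_{2,n}=T_{-n-2}$. The computation of this joint kernel at the critical level is the main obstacle; everything else is vertex-algebra bookkeeping or is already in hand.
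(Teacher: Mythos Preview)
The paper does not actually prove this theorem: it states it with a citation to \cite{FF} and offers only the one-sentence remark that ``the difficult part of the theorem is to show that every $H_{i,n}$ for $i>2$ admits a quantization $\widehat H_{i,n}$; the fact that these quantizations generate the center is then proved by a deformation argument.'' Your proposal is a correct and substantially more detailed unpacking of exactly this two-step outline --- the deformation argument is your PBW/jet-scheme computation of the Poisson center, and the existence of the lifts is your Wakimoto/screening-operator argument --- and it follows the approach of the cited Feigin--Frenkel paper itself (as later elaborated in Frenkel's book). So there is nothing to compare: you have supplied what the paper deliberately omits, and your sketch is sound.
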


The difficult part of the theorem is to show that every $H_{i,n}$ for $i>2$ 
admits a quantization $\widehat H_{i,n}$; the fact that these quantizations generate the center 
is then proved by a deformation argument.\footnote{For classical groups, there are 
explicit formulas for $H_{i,n}$, but for exceptional groups  of type $E$ and $F$ 
no nice formulas are known.}

Using this result, Beilinson and Drinfeld proved the following theorem in the unramified case.

\begin{theorem}[Beilinson--Drinfeld, \cite{BD}] \label{bdmain}
  Let $A$ be the algebra of twisted differential operators generated
  by the descendants $\overline{\widehat{H}}_{i,n}$ of the elements
  $\widehat H_{i,n}$. Then
  \begin{enumerate}[label=(\roman*)]
  \item $A$ is a polynomial algebra in $(g-1)\dim \lie{g}$ generators,
    which provides a quantization of the quantum Hitchin system for
    $\lie{g}$.
 
  \item $A$ coincides with the entire algebra $\cD(\Bun_G(X),
    \cK^{1/2})$.
  \end{enumerate}
\end{theorem}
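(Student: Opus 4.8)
The plan is to run the quantum hamiltonian reduction machinery of Subsections \ref{qhr}--\ref{twidif} in parallel with the proof of classical integrability (Subsections \ref{st1}--\ref{st3}), using the Feigin--Frenkel theorem as the input that replaces the (trivially commuting) classical invariant functions $H_{i,n}$. First I would recall the setup: for $G$ simple and simply connected, $\Bun_G(X) = G(R)\backslash G(K)/G(\cO)$ with $R = R_x$, $K = K_x$, $\cO = \cO_x$, and the canonical bundle $\cK_{\Bun_G(X)}$ descends from $\cL^{-2h^\vee}$ on $G(K)$ by Theorem \ref{BDtheo}, so that $\cK^{1/2}$ descends from $\cL^{-h^\vee}$. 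Consequently $\cD(\Bun_G^\circ(X),\cK^{1/2})$ is obtained by quantum hamiltonian reduction of $\widehat{U}_{-h^\vee}(\widehat{\lie g})$ by the left and right actions of $G(R)$ and $G(\cO)$, and the two-sided invariant operators coming from the critical-level completed enveloping algebra are exactly the image of its center $Z = Z(\widehat{U}_{-h^\vee}(\widehat{\lie g}))$. By Feigin--Frenkel, $Z$ is topologically generated by the elements $\widehat H_{i,n}$, $1\le i\le r$, $n\in\bZ$, quantizing the classical Sugawara-type hamiltonians $H_{i,n}$ and with $\widehat H_{2,n} = T_{-n-2}$. I would then let $\overline{\widehat H}_{i,n}$ denote their descendants in $\cD(\Bun_G^\circ(X),\cK^{1/2})$ and define $A$ to be the subalgebra they generate; since they come from a commutative algebra $Z$, the $\overline{\widehat H}_{i,n}$ commute, so $A$ is commutative.

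Next I would establish part (i), the identification of $A$ with a polynomial algebra in $(g-1)\dim\lie g$ generators quantizing the Hitchin system. The key device is the semiclassical (Rees) degeneration: $\cD(\Bun_G^\circ(X),\cK^{1/2})$ carries its order filtration, and $\gr$ of it is $\cO(T^*\Bun_G^\circ(X))$ with its Poisson structure, the twist by $\cK^{1/2}$ being invisible at the associated-graded level. Under this degeneration $\overline{\widehat H}_{i,n}$ degenerates to the classical Hitchin hamiltonian $\overline H_{i,n}$, which by Remark \ref{desce} is zero for $n > -1$ and equals $c_{-n-2}(Q_i(\phi))$ for $n\le -2$; the nonzero ones are precisely the coordinates of the classical Hitchin map $p\colon T^*\Bun_G^\circ(X)\to\cB_{X,G}$, which span a space of dimension $\dim\cB = (g-1)\dim G + \dim Z(G) = (g-1)\dim\lie g$ (as $G$ is semisimple), are algebraically independent by Theorem \ref{hitgen}, and generate a Poisson-commutative polynomial subalgebra. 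Since algebraic independence and the property of generating a polynomial ring pass from the associated graded to a filtered algebra, $A$ is a polynomial algebra in $(g-1)\dim\lie g$ generators quantizing the classical Hitchin system; identifying $\Spec A$ with the space of $\lie g$-opers on $X$ is then the statement that the Feigin--Frenkel center $Z$ at the critical level is the ring of functions on $\mathrm{Op}_{\lie g^\vee}(D^\times)$ (for $G=\SL_2$, $\mathrm{Op}_{\lie{sl}_2}(D^\times)$ as in Exercise \ref{operex}), which descends to opers on $X$ — I would cite \cite{FF,BD,BD:opers} for this, as in the $\SL_2$ discussion.

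Finally, part (ii): showing $A$ is \emph{all} of $\cD(\Bun_G(X),\cK^{1/2})$. Here I would first note that global twisted differential operators on $\Bun_G(X)$ and on $\Bun_G^\circ(X)$ agree because the unstable locus has codimension $\ge 2$ (the same argument used in Remark \ref{encoun} for functions applies to twisted differential operators of each order). Then, by the semiclassical degeneration again, it suffices to show that $\gr\cD(\Bun_G(X),\cK^{1/2}) = \cO(T^*\Bun_G(X))$ is exhausted by the Hitchin hamiltonians, i.e. that every global function on $T^*\Bun_G(X)$ is a polynomial in the $Q_i(\phi)$-coordinates — precisely the statement of Remark \ref{encoun}, attributed to \cite{BD}. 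Combined with the filtered-to-graded principle this forces $A = \cD(\Bun_G(X),\cK^{1/2})$. The main obstacle, and the place where I would lean hardest on cited results rather than reproving anything, is twofold: the Feigin--Frenkel theorem itself (existence of the higher central elements $\widehat H_{i,n}$, which is the genuinely hard quantum input and has no elementary proof), and the ``only the Hitchin hamiltonians'' statement of Remark \ref{encoun}, whose proof in \cite{BD} requires a careful analysis of $\Bun_G(X)$ near the critical level (via the flatness of the family of twisted-differential-operator algebras in the twisting parameter and a vanishing statement for the relevant cohomology of $\Bun_G(X)$). Everything else is bookkeeping with Rees algebras and the commutative diagram \eqref{comdia}.
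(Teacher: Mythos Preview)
The paper does not actually prove Theorem \ref{bdmain}; it is stated with attribution to \cite{BD}, preceded by a sketch of the $\SL_2$ case and the statement of the Feigin--Frenkel theorem. Your outline is precisely the strategy the paper suggests in that $\SL_2$ discussion, now fed with the higher Feigin--Frenkel elements: descend the central elements $\widehat H_{i,n}$ via quantum hamiltonian reduction, compare with the classical Hitchin hamiltonians through the order filtration to get part (i), and for part (ii) invoke Remark \ref{encoun} together with a filtered-to-graded argument. You correctly isolate the two genuine black boxes (the Feigin--Frenkel theorem and the ``only Hitchin hamiltonians are global'' statement of Remark \ref{encoun}), and you correctly flag the cohomology-vanishing subtlety needed to identify $\gr$ of global twisted differential operators with $\cO(T^*\Bun_G(X))$.

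One small slip: you have the indexing in Remark \ref{desce} reversed. It says $H_{i,n}$ descends to $0$ for $n<0$ and to $c_n(Q_i(\phi))$ for $n\ge 0$, not the other way around; this is purely notational and does not affect the argument.
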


In the ramified case (with parabolic structures or, more generally, twistings), 
the story is similar --- the quantum Hitchin Hamiltonians are still constructed 
as descendants of $\widehat H_{i,n}$, although part (ii) of Theorem \ref{bdmain} no longer holds. 

\begin{exercise}[cf. \cite{E3}]
  Construct a quantization of the Calogero--Moser system as a special
  case of the quantum twisted Hitchin system. Namely, quantize the
  procedure of Subsection \ref{ellcm}.
\end{exercise} 

\subsection{Opers in higher rank and Langlands duality} 

Finally, it remains to extend to higher rank the notion of an oper, i.e., 
generalize the statement that $A$ is the algebra of regular functions on the space of opers. 
This can be done as follows. 

Let $G$ be an adjoint simple group with Lie algebra $\lie{g}$, and $B\subset G$ a Borel subgroup with Lie algebra $\lie{b}$. Fix a principal $\lie{sl}_2$-triple $e,h,f\in \lie{g}$ such that $e,h\in \mathfrak{b}$. 

\begin{definition}[\cite{BD,BD:opers}] \label{operde}
  A $\lie{g}$-{\bf oper} on $X$ is a triple
  $(E,E_B,\nabla)$, where $E$ is a $G$-bundle on $X$,
  $E_B$ is a $B$-reduction of $E$, and
  $\nabla$ is a connection on $E$ which has the form 
 $$
 \nabla=d+(f+b(t))dt,\ b\in \mathfrak{b}[[t]]
 $$ 
 for any trivialization of $E_B$ (and hence $E$) on the formal
 neighborhood of any
 point $x$ of $X$ (where $t$ is a formal coordinate at
 $x$).\footnote{This definition doesn't actually depend on the choice of the principal $\lie{sl}_2$-triple since all principal $\lie{sl}_2$-triples $e,h,f\in \lie{g}$ such that $e,h\in \mathfrak{b}$ are conjugate by $B$.}  \end{definition}

In this definition, $X$ could be any smooth curve, a formal disk, or a punctured formal disk. 
But if $X$ is a projective curve then there is another, equivalent definition of an oper. Namely, 
let $\rho\colon SL_2\to G$ be the homomorphism corresponding to the triple $e,h,f$.
Then we can consider the associated $G$-bundle $E_{X,\rho}$ 
to the $SL_2$ oper bundle $E_X$ of Exercise \ref{operbun} via $\rho$. 

\begin{proposition}[\cite{BD,BD:opers}] A $\lie{g}$-oper is the same thing as a connection on the bundle $E_{X,\rho}$. In other words, the underlying bundle of a $\lie{g}$-oper 
is always isomorphic to $E_{X,\rho}$, and any connection on $E_{X,\rho}$ 
admits a unique oper structure (i.e., a $B$-structure satisfying Definition \ref{operde}). 
\end{proposition}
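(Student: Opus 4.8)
The plan is to reduce the statement to the rank one case of Exercise~\ref{operbun} by means of the principal homomorphism $\rho\colon\SL_2\to G$. I would fix the principal $\lie{sl}_2$-triple $e,h,f$ with $e,h\in\lie b$, which grades $\lie g=\bigoplus_j\lie g_j$ by $\ad(h)$-eigenvalues, so that $\lie b=\bigoplus_{j\ge0}\lie g_j$, so that $\lie g_{-2}\ni f=\sum_i c_i f_i$ is a principal nilpotent with every $c_i\ne0$ (the sum over simple roots $\alpha_i$), and so that $\langle\alpha_i,h\rangle=2$ for all $i$ (in particular $\lie g$ is graded by even integers); write $\check h\colon\bG_m\to T$ for the corresponding coweight, through which the maximal torus of $\SL_2$ maps to $T$. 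Since $G$ is adjoint the $\alpha_i$ form a $\bZ$-basis of $X^*(T)$, so a $T$-bundle on $X$ is pinned down by the $r$ line bundles it induces via the $\alpha_i$. I would set $E_T^{\mathrm{op}}:=\check h_*(\cK_X^{1/2})$, note that $E_T^{\mathrm{op}}\times_T\bC_{\alpha_i}\cong\cK_X$ for all $i$, and observe that $E_T^{\mathrm{op}}$ is precisely the $T$-bundle underlying the canonical $B$-reduction $E_{X,B}$ of $E_{X,\rho}$ coming from the flag $\cK_X^{1/2}\subset E_X$ of Exercise~\ref{operbun}.

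For the first assertion (the underlying $G$-bundle of any $\lie g$-oper is $E_{X,\rho}$), let $(E,E_B,\nabla)$ be a $\lie g$-oper and put $E_T:=E_B\times_BT$. First I would show $E_T\cong E_T^{\mathrm{op}}$: the image of $\nabla$ in $(E_B\times_B\lie g/\lie b)\otimes\cK_X$ has, along each simple root line $\lie g_{-\alpha_i}$, an $\cO_X$-linear section of $(E_T\times_T\bC_{-\alpha_i})\otimes\cK_X$, and the oper normal form $\nabla=d+(f+b)\,dt$ with $c_i\ne0$ forces it to be nowhere vanishing, whence $(E_T\times_T\bC_{-\alpha_i})\otimes\cK_X\cong\cO_X$ for all $i$, i.e.\ $E_T\cong E_T^{\mathrm{op}}$. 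To promote this to $E\cong E_{X,\rho}$ I would filter $\ad(E)$ by $\ad(E)_{\ge j}:=E_B\times_B\lie g_{\ge j}$; on associated graded, $\nabla$ induces the maps $\ad(f)\otimes\mathrm{id}\colon\gr_j(\ad E)\to\gr_{j-2}(\ad E)\otimes\cK_X$, and since $\lie g$ decomposes into principal $\lie{sl}_2$-irreducibles (the blocks of sizes $2d_i-1$, cf.\ \eqref{sumformula}), $\ad(f)\colon\lie g_j\to\lie g_{j-2}$ is surjective for $j\le0$. Running down the filtration exactly as in Exercise~\ref{operbun}(i), the unipotent part of $E_B$ is forced to coincide degree by degree with the nonzero ``Atiyah class'' data already present in $E_{X,B}$, since any other choice would be incompatible with the existence of a connection on $E$ --- just as the split extension is incompatible with a connection in rank two because $\deg\cK_X^{1/2}=g-1\ne0$. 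This yields $E_B\cong E_{X,B}$, hence $E\cong E_{X,\rho}$.

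For the second assertion (every connection on $E_{X,\rho}$ carries a unique oper structure), fix a connection $\nabla$ on $E_{X,\rho}$. By the first assertion, any oper $B$-reduction of $(E_{X,\rho},\nabla)$ induces $E_T^{\mathrm{op}}$, so is obtained from $E_{X,B}$ by a change of $B$-reduction fixing the $T$-quotient --- that is, by an $N$-valued gauge transformation, $N:=[B,B]$. Thus it suffices to show that a unique such gauge transformation puts $\nabla$ into oper form relative to $E_{X,B}$. Formally locally this is the Drinfeld--Sokolov normal form: using that $\ad(f)$ is injective on $\lie n=\bigoplus_{j\ge2}\lie g_j$ and controls the lower-degree directions, one brings, degree by degree, every connection by a unique formal $N$-valued gauge transformation to the form $d+(f+b)\,dt$ with $b$ valued in $\lie b$. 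To patch these local transformations into a global change of the $B$-reduction $E_{X,B}$ one must kill an obstruction class in $H^1$ of the $\cO_X$-modules $E_{X,B}\times_B\lie n_j$ twisted by appropriate negative powers of $\cK_X$; for the oper bundle these sheaves are built from negative powers of $\cK_X$, so the relevant $H^1$ vanishes (dually, the Serre-dual $H^0$'s are already controlled by the rank-two computation of Exercise~\ref{operbun}(iii)), and the same count shows the global transformation is unique. This produces the sought $B$-reduction and establishes its uniqueness; together with the first assertion, $\lie g$-opers on $X$ are thereby identified with connections on $E_{X,\rho}$.

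I expect the main obstacle to be the rigidity step inside the first assertion: upgrading $E_T\cong E_T^{\mathrm{op}}$ to an isomorphism of $G$-bundles $E\cong E_{X,\rho}$. This is where the principal $\lie{sl}_2$ structure really enters --- through the surjectivity and injectivity of $\ad(f)$ on the appropriate graded pieces --- combined with the Atiyah-class obstruction to the existence of a connection on a line bundle of nonzero degree; a fully detailed argument is carried out in \cite{BD,BD:opers}.
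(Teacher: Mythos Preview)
The paper does not prove this proposition; it is stated with attribution to \cite{BD,BD:opers} and passed over without argument. So there is nothing in the paper to compare your sketch against.

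Your outline follows the standard argument from those references and assembles the correct ingredients: the $\ad(h)$-grading of $\lie g$ via the principal $\lie{sl}_2$, the identification of the underlying $T$-bundle from the nowhere-vanishing $f$-component of $\nabla$, the Drinfeld--Sokolov gauge reduction, and the cohomological vanishing needed to globalize. The step you flag as the main obstacle is indeed the delicate one, and your description of it is the vaguest part of the write-up. The precise mechanism is that $B$-bundles with fixed $T$-quotient $E_T^{\mathrm{op}}$ are classified by successive extension classes in $H^1(X,\cK_X^{j/2}\otimes\lie g_{-j})$ for $j\ge 2$; combining the isomorphism $\mathrm{gr}_j(\ad E)\cong\cK_X^{-j/2}\otimes\lie g_j$ with the connection and the surjectivity of $\ad(f)$ on nonpositive graded pieces forces these classes to agree with those of $E_{X,B}$, step by step. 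One small imprecision in your second assertion: the local form $d+(f+b)\,dt$ with $b\in\lie b$ is not unique under $N$-gauge, so it does not by itself give uniqueness of the $B$-reduction. The clean fix is to normalize further into a transversal slice such as $\ker(\ad e)\cap\lie b$; then both existence and uniqueness of the $N$-gauge transformation follow directly from the injectivity of $\ad(f)$ on $\lie n$ and its surjectivity onto the complementary piece, and your $H^1$ vanishing argument globalizes this without difficulty.
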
 

\begin{exercise} Show that $\lie{sl}_n$-opers on $X$ bijectively correspond 
to differential operators $L\colon \cK_X^{\frac{-n+1}{2}}\to \cK_X^{\frac{n+1}{2}}$
such that in local coordinates 
$$
L=\partial_t^n+a_2(t)\partial_t^{n-2}+\cdots+a_n(t).
$$
\end{exercise}

\begin{theorem}[\cite{BD}] \label{bdmain1} 
  \begin{enumerate}[label=(\roman*)]
  \item $\lie{g}$-opers form an affine space ${\rm
    Op}_{\mathfrak{g}}(X)$ whose underlying vector space is the
    Hitchin base $\mathcal B_{X,G}$.

  \item The algebra $A$ of Theorem \ref{bdmain} is naturally
    isomorphic to $\mathbb C[{\rm Op}_{\mathfrak{g}^\vee}(X)]$, where
    $\mathfrak{g}^\vee$ is the {\it Langlands dual} Lie algebra of
    $\lie{g}$ (i.e., the Lie algebra with the dual root system to the
    root system of $\lie{g}$).
  \end{enumerate}
\end{theorem}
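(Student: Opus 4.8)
The plan is to prove (i) by a local Drinfeld--Sokolov normal form computation together with patching, and (ii) by combining the local Feigin--Frenkel description of the critical-level center with a globalization that mirrors the classical diagram \eqref{comdia}. For (i): by the preceding Proposition a $\lie{g}$-oper on $X$ is the same datum as a connection on the fixed bundle $E_{X,\rho}$, so it suffices to analyze oper connections locally and then patch. Locally write $\nabla = d + (f + b(t))\,dt$ with $b \in \lie{b}[[t]]$ and act by gauge transformations valued in $N$, the unipotent radical of $B$. Using the principal $\bZ$-grading $\lie{g} = \bigoplus_j \lie{g}_j$ of the triple $e,h,f$, the map $\ad f$ is injective on $\bigoplus_{j>0}\lie{g}_j$, and decomposing $\lie{g}$ into irreducible $\langle e,h,f\rangle$-modules of dimensions $2d_i-1$ one gets a canonical complement $V_{\mathrm{can}} = \bigoplus_{i=1}^r \bC p_i$ to $[f,\lie{n}]$ inside $\lie{b}$, with $p_i$ of principal degree $2d_i-2$. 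One shows every oper connection is gauge-equivalent, uniquely, to the canonical form $d + \bigl(f + \sum_{i=1}^r v_i(t)p_i\bigr)dt$. Tracking the effect of a change of formal coordinate $t\mapsto s(t)$ on $(v_1,\dots,v_r)$ shows $v_i$ transforms as a section of $\cK_X^{\otimes d_i}$ up to a lower-order inhomogeneous term (for $d_i=2$ this is exactly the Schwarzian of Exercise \ref{operex}), so the sheaf of opers on $X$ is a torsor over $\bigoplus_i \cK_X^{\otimes d_i}$ — affine, not linear. Global opers exist: the $\mathfrak{sl}_2$-oper bundle $E_X$ of Exercise \ref{operbun} carries a connection (Exercise \ref{operbun}(ii)--(iii)), and inducing it along $\rho\colon SL_2\to G$ gives a connection on $E_{X,\rho}$ which is automatically of oper type; hence the $H^1$-obstruction to global sections vanishes and $\mathrm{Op}_{\lie{g}}(X)$ is a torsor over $\bigoplus_{i=1}^r H^0(X,\cK_X^{\otimes d_i}) = \mathcal B_{X,G}$, which is (i).

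For (ii), the essential input is the refined Feigin--Frenkel theorem: the center $\mathfrak z := Z\bigl(\widehat U(\widehat{\lie g})/\langle {\rm K}=-h^\vee\rangle\bigr)$ is identified not merely as the polynomial algebra on the $\widehat H_{i,n}$ but \emph{canonically} with $\bC[\mathrm{Op}_{\lie g^\vee}(D^\times)]$, functions on $\lie g^\vee$-opers on the punctured disk, under which $\widehat H_{i,n}$ becomes (up to normalization) the Taylor coefficient $u_{i,-n-d_i}$ of the $i$-th canonical-form component, generalizing $T_n\leftrightarrow u_{-n-2}$ for $\lie{sl}_2$; the appearance of $\lie g^\vee$ is the Feigin--Frenkel duality between the classical $W$-algebra of $\lie g$ and the Poisson algebra of $\lie g^\vee$-opers. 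Granting this, one globalizes as in the classical proof: the central elements $\widehat H_{i,n}$ descend under quantum Hamiltonian reduction of $\cD(G(K),\cL^{-h^\vee})$ by $G(R_x)\times G(\cO_x)$ to $\overline{\widehat H}_{i,n}\in\cD(\Bun_G^\circ(X),\cK^{1/2})$, and one proves the quantum analog of \eqref{comdia}: reduction by $G(\cO_x)$ forces $\overline{\widehat H}_{i,n}=0$ for $n\ge 1-d_i$ (the oper extends over $x$), while reduction by $G(R_x)$ forces the remaining coefficients to assemble into a $\lie g^\vee$-oper regular on all of $X$. Thus the composite $\mathfrak z\cong\bC[\mathrm{Op}_{\lie g^\vee}(D_x^\times)]\twoheadrightarrow A$ is dual to the restriction $\mathrm{Op}_{\lie g^\vee}(X)\hookrightarrow\mathrm{Op}_{\lie g^\vee}(D_x^\times)$, and by part (i) its image is generated by the affine coordinate functions on the affine space $\mathrm{Op}_{\lie g^\vee}(X)$ over $\mathcal B_{X,G^\vee}$, giving a canonical surjection $\bC[\mathrm{Op}_{\lie g^\vee}(X)]\twoheadrightarrow A$. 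That it is an isomorphism follows by passing to the associated-graded (classical) limit, where it recovers the identification of $\bC[\mathcal B_{X,G^\vee}]=\bC[\mathcal B_{X,G}]$ with the image of the Hitchin map $p^*$, injective by part 2 of Hitchin's theorem; since $\dim\mathrm{Op}_{\lie g^\vee}(X)=\dim\mathcal B_{X,G^\vee}=(g-1)\dim\lie g$ matches the generator count in Theorem \ref{bdmain}(i), injectivity and surjectivity are consistent and we are done.

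The main obstacle is that everything substantive in (ii) rests on the oper-theoretic refinement of Feigin--Frenkel, and in particular on the switch from $\lie g$ to $\lie g^\vee$; this is the conceptual crux and I would treat it as a cited black box rather than reprove it. The second genuine difficulty is the quantum globalization step — making precise the quantum version of \eqref{comdia}, i.e. that the quantum Hamiltonian reduction literally realizes the restriction-of-opers map — which requires controlling how the Drinfeld--Sokolov grading and the critical twist $\cK^{1/2}$ interact with the reduction, and checking compatibility of the various forms of the groups ($G$ adjoint versus $G^\vee$) and the chosen spin structure.
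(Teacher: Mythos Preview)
The paper does not actually prove Theorem \ref{bdmain1}; it is stated with attribution to \cite{BD}, and the Remark immediately following it merely records (without argument) that (ii) comes from the refined Feigin--Frenkel identification of the critical-level center with $\bC[\mathrm{Op}_{\lie g^\vee}(D^\times)]$, under which descent to $\Bun_G(X)$ corresponds to the restriction $\mathrm{Op}_{\lie g^\vee}(X)\hookrightarrow\mathrm{Op}_{\lie g^\vee}(D^\times)$. So there is no in-paper proof to compare against; your sketch is being measured against the original \cite{BD} argument, and it is essentially that argument.

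Your outline for (i) via the Drinfeld--Sokolov canonical form is the standard one and is correct: the decomposition $\lie b = [f,\lie n]\oplus V_{\mathrm{can}}$ with $V_{\mathrm{can}}=\ker(\ad e)\cap\lie b$ gives unique $N$-gauge representatives $d+(f+\sum_i v_i(t)p_i)\,dt$, the coordinate-change law makes the tuple $(v_i)$ a section of a sheaf that is a torsor over $\bigoplus_i\cK_X^{\otimes d_i}$, and non-emptiness follows by inducing an $\lie{sl}_2$-oper along $\rho$. For (ii) your plan is exactly what the paper's Remark compresses into one sentence: take the refined Feigin--Frenkel isomorphism as input, then argue that quantum Hamiltonian reduction by $G(R_x)\times G(\cO_x)$ realizes, on the center, the dual of the inclusion of global opers into local ones, and check bijectivity by passing to associated graded and invoking Hitchin's theorem. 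Your self-identified black boxes --- the $\lie g\leadsto\lie g^\vee$ switch inside Feigin--Frenkel, and making the quantum analogue of diagram \eqref{comdia} precise --- are indeed the two places where real work lives, and the paper likewise defers both to \cite{BD} and \cite{FF}.
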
 

\begin{remark}
  \begin{enumerate}
  \item Note that $\mathcal B_{X,G}$ is canonically
    isomorphic to $\mathcal B_{X,G^\vee}$.

  \item In fact, a more precise version of the Feigin-Frenkel theorem
    provides a natural identification of the center of $\widehat U_{-h^\vee}(\lie{g}((t)))$
     with the algebra of
    regular functions on the space ${\rm
      Op}_{\mathfrak{g}^\vee}(D^\times)$. Using this identification,
    the procedure of descending central elements to Hitchin
    Hamiltonians corresponds to the inclusion ${\rm
      Op}_{\mathfrak{g}^\vee}(X)\hookrightarrow {\rm
      Op}_{\mathfrak{g}^\vee}(D^\times)$.
  \end{enumerate}
\end{remark} 

Thus Theorem \ref{bdmain1} should be viewed as an instance of {\it Langlands duality}. 
It is, in fact, a starting point of the {\it geometric Langlands program}. But this is already beyond the scope of this paper.  

\section{Solutions of problems}

\subsection{Problem \ref{bafun}}

  (i) This follows since 
  $\Pic(X\setminus 0)\cong \Pic(X)/\langle \cO(0)\rangle=\Pic_0(X)$
  and the class of $L$ in $\Pic_0(X)$ is nontrivial. 

  (ii) We realize $X$ as $\mathbb C/\langle 1,\tau\rangle$, where
  $\im\tau>0$. Let $\theta(z)\coloneqq\theta(z,\tau)$ be the
  theta-function of $X$, which is an entire function with simple zeros
  on the period lattice of $X$. This function is periodic with period
  $1$ and
  \[ \theta(z + \tau) = -e^{-2\pi i z}\theta(z). \]
  Thus $\frac{\theta'}{\theta}$ is periodic with period $1$ and 
  $\frac{\theta'}{\theta}(z + \tau) = \frac{\theta'}{\theta}(z) - 2\pi i$.
  Hence for $a\notin \langle 1,\tau\rangle$, the {\it Lame--Hermite function}
  \[ H(z,a) \coloneqq e^{a\frac{\theta'}{\theta}(z)} \frac{\theta(z-a)}{\theta(z)} \]
  is doubly-periodic, i.e. is a holomorphic function on $X \setminus
  0$. It has a simple zero at $a$ and no other zeros and poles, but it
  has an essential singularity at $0$: $H(z)\sim Cz^{-1}e^{\frac{a}{z}}$, $z\to 0$. Thus $H$ may be viewed as a
  non-vanishing holomorphic section of the analytic line bundle
  $\cO(a)^\vee$ over $X \setminus 0$. Hence $\cO(a)$ is trivial on $X
  \setminus 0$. But the given line bundle $L$ is of the form $L =
  \cO(a) \otimes \cO(0)^\vee$ for some $0 \neq a \in X$, so we are
  done.\footnote{The Lam\'e--Hermite function arises as the Baker--Akhiezer function 
for elliptic curve in the theory of integrable systems, see \cite{Kr}.}

\subsection{Problem \ref{usegaga}}

 Pick $r>0$ such that 
$A(z)$ and $A^{-1}(z)$ are regular on the circle $|z|=r$ (this can be done 
since these functions have countably many poles, while the set of choices for $r$ is uncountable). By rescaling $z$, we may assume without loss of generality that $r=1$. 

Consider the elliptic curve $X\coloneqq\mathbb C^\times/q^{\mathbb Z}$.
Cover $X$ by two open sets: 
$$
U_1=\lbrace z\in \mathbb C: |q|<|z|<1\rbrace,\ U_2=\lbrace z\in \mathbb C: 1-\varepsilon<|z|<1+\varepsilon\rbrace,
$$ 
for sufficiently small 
$\varepsilon>0$. The intersection $U_1\cap U_2$ has two connected components 
$W_\pm$, where 
$$
W_+\coloneqq\lbrace z\in \mathbb C: 1<|z|<1+\varepsilon\rbrace,\ W_-\coloneqq\lbrace z\in \mathbb C: 1-\varepsilon<|z|<1\rbrace.
$$ 
Define a holomorphic vector bundle $E$ on $X$ by the transition function $g(z)$ which equals 
$1$ on $W_-$ and $A(z)$ on $W_+$ (this is well defined since for 
small enough $\varepsilon$, the matrix functions $A,A^{-1}$ are regular on $W_+$). It is easy to see that then vector solutions of the difference equation 
$f(qz)=A(z)f(z)$ correspond to meromorphic sections of $E$. 
By the GAGA theorem, $E$ has an algebraic structure, hence trivializes algebraically 
on some open set $X\setminus \lbrace x_1,\ldots,x_m\rbrace$ by Hilbert's Theorem 90. 
Thus $E$ has a basis of meromorphic sections $f_1,\ldots,f_n$ with no poles outside $x_1,\ldots,x_m$. Arranging them into a matrix $f=(f_1,\ldots,f_n)$, we obtain the desired solution. 

\subsection{Problem \ref{bunelcur}}

  Let $n \coloneqq \deg \cL$. Without loss of generality, we may
  assume $n > 0$; otherwise, replace $\cL$ with $\cL^\vee$. Then $\cL
  = \cO(x_1 + \cdots + x_n)$ for some points $x_1, \ldots, x_n \in X$.
  Let $y \in X$ be such that $x_1 + \cdots + x_n = ny$. We can
  construct such a $y$ by lifting from $X = \bC/\Gamma$ to $\bC$,
  taking $y$ to be the average of the $x_i$, and then projecting back
  to $\bC/\Gamma$. Then $\cL = \cO(y)^{\otimes n}$, so $\cL$ is
  trivialized by deleting $y$.

  The same is not true on a genus $2$ curve $X$ purely by dimension
  counting: the Jacobian has dimension two, but the choice of a point
  $x \in X$ is only one dimension's worth of freedom.

\subsection{Problem \ref{PGLSL}}

  By Corollary \ref{grothen}, $\GL_n$-bundles on $\mathbb P^1$ have the form $\cO(m_1) \oplus
  \cdots\oplus \cO(m_n)$ for unique $m_1 \ge m_2\ge\cdots\ge m_n$, $m_i\in \mathbb Z$. 
  Since $\PGL_n$-bundles are
  equivalence classes of $\GL_n$-bundles under tensor product with
  line bundles, and line bundles on $\bP^1$ are all of the form
  $\cO(k)$ for some $k \in \bZ$, it follows that $\PGL_n$-bundles are
  classified by the differences $\ell_i\coloneqq m_i - m_n$, which form 
  a sequence of integers $\ell_1\ge\cdots\ge \ell_{n-1}\ge 0$.
  
  Similarly, $\SL_n$-bundles are $\GL_n$-bundles with trivial determinant, so they  
  have the form $\cO(m_1) \oplus
  \cdots\oplus \cO(m_n)$ for unique $m_1 \ge m_2\ge\cdots\ge m_n$, $m_i\in \mathbb Z$, $\sum_{i=1}^n m_i=0$. 
  
\subsection{Problem \ref{hankel}}

(i) By Theorem \ref{grrank2}, $E \cong \cO(k) \oplus \cO(m-k)$ for 
  unique integer $\frac{m}{2}\le k\le m$.  It follows that for an integer $\frac{m}{2}\le n\le m$
  \[ E \cong \cO(k) \oplus \cO(m-k),\ \frac{m}{2}\le k\le n \iff \Hom(\cO(n+1), E) = 0. \]
  In other words, we want to find $f(z)$ such that $E \otimes
  \cO(-n-1)$ has no non-zero section. The transition function for this
  bundle is
  \[ \begin{pmatrix} z^{-n-1} & z^{-n-1} f(z) \\ 0 & z^{m-n-1} \end{pmatrix} \]
  and a section is a pair $(\binom{x_0(z)}{y_0(z)}, \binom{x_\infty(z^{-1})}{
  y_\infty(z^{-1})})$ of vector-valued polynomials such that
  \[ \begin{pmatrix} x_0(z) \\ y_0(z) \end{pmatrix} = \begin{pmatrix} z^{-n-1} & z^{-n-1} f(z) \\ 0 & z^{m-n-1} \end{pmatrix} \begin{pmatrix} x_\infty(z^{-1}) \\ y_\infty(z^{-1}) \end{pmatrix}. \]
  Let's try to construct such a section. The equation for the second entry implies
  \[ y_\infty(z^{-1}) = c_0 + c_{-1} z^{-1} + \cdots + c_{n-m+1} z^{n-m+1} \]
  for some scalars $\{c_{-i}\}_{i=0}^{m-n-1}$. Plugging this into the
  first equation, we get
  \[ z^{-n-1} x_\infty(z^{-1}) + z^{-n-1} (a_1 z + \cdots + a_{m-1} z^{m-1}) (c_0 + \cdots + c_{n-m+1} z^{n-m+1}) \]
  must be a polynomial. We can use the freedom to choose
  $x_\infty(z^{-1})$ to cancel all terms of degree $< -n$. The
  remaining terms of degree in $[-n, -1]$ must vanish. This is the
  condition that the equations
  \begin{align*}
    a_1 c_0 + a_2 c_{-1} + \cdots + a_{m-n} c_{n-m+1} &= 0 \\
    a_2 c_0 + a_3 c_{-1} + \cdots + a_{m-n+1} c_{n-m+1} &= 0 \\
    &\vdots \\
    a_n c_0 + a_{n+1} c_{-1} + \cdots + a_{m-1} c_{n-m+1} &= 0
  \end{align*}
  have a non-zero solution $(c_0, \ldots, c_{n-m+1})$, i.e. that the
  matrix
  \[ A \coloneqq \begin{pmatrix} a_1 & a_2 & \cdots & a_{m-n} \\ a_2 & a_3 &\cdots & a_{m-n+1} \\ & & \cdots \\ a_n & a_{n+1} & \cdots & a_{m-1}\end{pmatrix} \]
  has rank $<m-n$. So the desired condition on $a_1, \ldots, a_{m-1}$ is that the matrix $A$ has full rank (equal to $m-n$). 
  
(ii) In the special case $m=2n$ we get that $E\cong \cO(n)\oplus \cO(n)$
iff 
$$
H(a_1,\ldots,a_{2n-1})\coloneqq\det \begin{pmatrix} a_1 & a_2 & \cdots & a_{n} \\ a_2 & a_3 &\cdots & a_{n+1} \\ & & \ddots \\ a_n & a_{n+1} & \cdots & a_{2n-1}\end{pmatrix}\ne 0. 
$$
This determinant is called the {\it Hankel determinant}.

\begin{remark} An isomorphism $E\cong \cO(n)\oplus \cO(n)$ is given by 
  polynomials $g_0(z)$ and $g_\infty(z^{-1})$ valued in $\GL_2$ such
  that
  \[ \begin{pmatrix} 1 & f(z) \\ 0 & z^{2n} \end{pmatrix} = g_0(z) \begin{pmatrix} z^n & 0 \\ 0 & z^n \end{pmatrix} g_\infty(z^{-1})^{-1}. \]
  This can be rewritten as
  \[ \begin{pmatrix} z^{-n} & z^{-n} f(z) \\ 0 & z^{n} \end{pmatrix} = g_0(z) g_\infty(z^{-1})^{-1}. \]
  Finding such $g_0(z)$ and $g_\infty(z^{-1})$ is a {\it Birkhoff
    factorization} problem. Birkhoff factorizations exist only for
  sufficiently generic operators, and part (ii) of the problem asks to find the
  appropriate condition on $f(z)$ so that this Birkhoff factorization
  exists.
    \end{remark}

\subsection{Problem \ref{Bstr}}

  Recall that $\GL_n$-bundles $E$ are equivalently vector bundles. A
  $B$-structure on the vector bundle $E$ is equivalently a filtration
  by subbundles
  \[ 0 = E_0 \subset E_1 \subset \cdots \subset E_n = E \]
  such that $E_{i+1}/E_i$ are line bundles. We construct such a
  filtration by induction on the rank $n$. By Lemma \ref{l1}, there is a line subbundle $L\subset E$, i.e., a short exact sequence 
    \[ 0 \to L\to E \to E' \to 0. \]
  Since $\rank E' < \rank E$, by the induction hypothesis, $E'$ has a
  filtration by subbundles
  \[ 0 = E_0' \subset E_1' \subset \cdots\subset  E_{n-1}' = E'. \]
  Let $E_{i+1}$ be the preimage of $E_i'$ in $E$ and $E_1=L$. 
  This gives the desired filtration of
  $E$.

\subsection{Problem \ref{adE}}
  Pick a finite cover $X = \bigcup_{i \in I} U_i$ such that the
  principal $G$-bundle $E$ is trivialized on each $U_i$. (By Theorem \ref{BSS}, we may take a  Zariski cover.) Recall that $E$ is
  determined by transition functions, which are regular functions
  \[ g_{ij}\colon U_i \cap U_j \to G\]
  satisfying $g_{ii}=\id,g_{ij} \circ g_{ji} = \id$, and the $1$-cocycle condition
  $g_{ij} \circ g_{jk} \circ g_{ki} = \id$. Therefore, an
  infinitesimal deformation of $E$ is given by the modification
  \[ g_{ij} \mapsto \widetilde g_{ij} \coloneqq g_{ij} \cdot \exp(\varepsilon \xi_{ij}) \]
  where $\varepsilon^2 = 0$, for a choice of a regular function
  $\xi_{ij}\colon U_i \cap U_j \to \lie{g}$ for each $i,j \in I$. The
  conditions that $\{\widetilde g_{ij}\}_{i,j \in I}$ is still a set of
  valid transition functions, namely that $\widetilde g_{ii}=\id,\ \widetilde g_{ij} \circ \widetilde
  g_{ji} = \id$ and $\widetilde g_{ij} \circ \widetilde g_{jk} \circ \widetilde
  g_{ki} = \id$, hold if and only if
  \begin{align*} \xi_{ii}=0,\
    g_{ij} \circ \xi_{ij} \circ g_{ij}^{-1} &= -\xi_{ji}, \\
    g_{ij} \circ \xi_{ij} \circ g_{ij}^{-1} + g_{ik} \circ \xi_{jk} \circ g_{ik}^{-1} + \xi_{ki} &= 0.
  \end{align*}
  These equations say precisely that the element
  \[ (\xi_{ij})_{i,j \in I} \in \bigoplus_{i,j \in I} H^0(U_i \cap U_j, \ad E) \]
  lies in the kernel of the \v Cech differential. Similarly, recall
  that two sets $\{g_{ij}\}_{i,j \in I}$ and $\{g_{ij}'\}_{i,j \in I}$
  of transition functions are equivalent (i.e., define isomorphic bundles) if and only if there exist
  regular functions $\{h_i\colon U_i \to G\}_{i \in I}$ such that
  $g_{ij}' = h_i \circ g_{ij} \circ h_j^{-1}$. A similar reasoning shows that the
  deformed transition functions defined by two different
  $(\xi_{ij})_{i,j \in I}$ and $(\xi_{ij}')_{i,j \in I}$ are
  equivalent if and only if they differ by the image, under the \v
  Cech differential, of an element
  \[ (\eta_i)_{i \in I} \in \bigoplus_{i \in I} H^0(U_i, \ad E). \]
  Putting it all together, we find that $T_E\Bun_G^\circ(X)$ is the cohomology at
  the middle term of the \v Cech complex
  \[ \bigoplus_{i \in I} H^0(U_i, \ad E) \to \bigoplus_{i,j \in I} H^0(U_i \cap U_j, \ad E) \to \bigoplus_{i,j,k \in I} H^0(U_i \cap U_j \cap U_k, \ad E), \]
  which by definition is $H^1(X, \ad E)$.
  
\subsection{Problem \ref{garn}}\label{lastsec}

The Garnier system is the Hitchin system for parabolic
  $\PGL_2$-bundles on $\bP^1$ with marked points $t_1, \ldots, t_N \in
  \bC$, parabolic structures $y_1, \ldots, y_N$, and Higgs field
  \[ \phi = \sum_{i=1}^N \frac{\begin{pmatrix} p_iy_i & -p_i y_i^2 \\ p_i & -p_i y_i \end{pmatrix}}{z - t_i}dz. \] 
Thus the determinant $\det \phi$ 
is a rational function of $z$ with poles only at $t_1,\ldots,t_N$ (we drop the factor $(dz)^2$ for brevity). Moreover, since the residues of $\phi$ are nilpotent, 
these poles are simple. Thus $\det \phi=\frac{a(z)}{b(z)}$ where 
$b(z)\coloneqq(z-t_1)\cdots(z-t_N)$, hence $\deg b=N$. Also since $\phi$ is regular at infinity, the corresponding matrix function vanishes there to second order, implying that $\det \phi$, viewed as a function, generically vanishes to order $4$ at infinity. It follows that 
$\deg a=N-4$. 

The spectral curve is generically the normalization of the hyperelliptic curve $C$
defined by the equation $y^2=\frac{a(z)}{b(z)}$. Replacing $y$ by $y/b(z)$, 
we obtain the equation
$$
y^2=a(z)b(z).
$$
Thus if $\deg a=n_a$ and $\deg b=n_b$ then the genus of $C$ 
is $\frac{1}{2}(n_a+n_b)-1$. So in our case the genus of $C$ 
equals $\frac{1}{2}(2N-4)-1=N-3$, the dimension of the Garnier system, 
as expected (note that in genus $0$, the connected components of the moduli spaces of $\PGL_2$ and $\GL_2$-bundles are the same, so the dimension of the moduli space of $\GL_2$-bundles 
equals the genus of the spectral curve). 

For $N=4$, we get the equation 
$$
y^2=(z-t_1)(z-t_2)(z-t_3)(z-t_4), 
$$
which gives an elliptic curve. If we make a M\"obius transformation 
sending $(t_1,t_2,t_3,t_4)$ to $(0,1,\infty,t)$ then the equation 
of the spectral curve takes the form 
$$
y^2=z(z-1)(z-t).
$$

\begin{remark} We see that in the case of four parabolic points on $\mathbb P^1$, 
the phase space of the Hitchin system is $\cM^\circ\coloneqq T^*(\mathbb A^1\setminus  \lbrace 0,1,t\rbrace)=(\mathbb A^1\setminus  \lbrace 0,1,t\rbrace)\times \mathbb A^1$, 
and the Hamiltonian is 
$$
H=p^2x(x-1)(x-t).
$$
Thus the generic level curves of $H$ are elliptic curves $p^2x(x-1)(x-t)=C$, 
which are missing the four branch points (i.e., points of order $2$). As indicated 
in Subsection \ref{stabu}, this means that we should expect a partial symplectic compactification 
$\cM$ of $\cM^\circ$ in which these points are present, and that the Hitchin map 
$p\colon \cM^\circ\to \mathcal B=\mathbb A^1$ extends to $p\colon \cM\to \mathcal B=\mathbb A^1$, which is now proper, with generic fibers being complete elliptic curves. 

The variety $\cM$ can be constructed as follows (see \cite{Hl}, Subsection 4.2). 
Let $E$ be the elliptic curve which is a double cover of $\mathbb P^1$ branching at points $0,1,t, \infty$. Let $\cM_*\coloneqq (E\times \mathbb A^1)/(\pm 1)$. This is an orbifold with four $A_1$ singularities
corresponding to points of order $2$ on $E$. Let $\cM$ be the blow-up of $\cM_*$ at these singularities. Then $\cM$ contains four exceptional divisors $D_0,D_1,D_t,D_{\infty}$ 
isomorphic to $\mathbb P^1$, and $\cM\setminus \sqcup_{i=1}^4 D_i=\cM^\circ$.   
Also the map $p$ clearly extends to a proper map $p\colon\cM\to \mathbb A^1$ given by the second projection to $\mathbb A^1/(\pm 1)\cong \mathbb A^1$. The space $\cM$ is the simplest example of the {\it Hitchin moduli space}, which is a natural home of the classical Hitchin system. It is obtained from the original phase space $\cM^\circ$ by partial (smooth) compactification -- attaching four disjoint copies of $\mathbb P^1$.  
\end{remark}

\end{document}